\def\N{\mathbb{N}}
\def\R{\mathbb{R}}
\def\d{\,\mathrm{d}}
\def\dv{\d v}
\def\p{\partial}
\def\:{\colon}
\newtheorem{thm}{Theorem}[section]
\newtheorem{lem}[thm]{Lemma}
\theoremstyle{definition}
\newtheorem{dfn}[thm]{Definition}
\theoremstyle{remark}
\newtheorem{rem}[thm]{Remark}
\theoremstyle{example}
\theoremstyle{conjecture}
\numberwithin{equation}{section}
\def\thetitle{Sequence of pseudo-equilibria describes the long-time behaviour of the NNLIF model with large delay}
\def\theauthor{María J.~Cáceres\footnote{\texttt{\href{mailto:caceresg@ugr.es}{caceresg@ugr.es}}} \& José~A.~Cañizo\footnote{\texttt{\href{mailto:canizo@ugr.es}{canizo@ugr.es}}} \& Alejandro Ramos-Lora\footnote{\texttt{\href{mailto:ramoslora@ugr.es}{ramoslora@ugr.es}}}\\Department of Applied Mathematics \& IMAG\\ University of Granada}
\title{\thetitle}
\author{\theauthor}
\date{April 2024}
\begin{document}

\maketitle

\begin{abstract}
  There is a wide range of mathematical models that describe
  populations of large numbers of neurons.  In this article, we focus
  on nonlinear noisy leaky integrate and fire (NNLIF) models that
  describe neuronal activity at the level of the membrane potential.
  We introduce a sequence of novel states, which we call
  \emph{pseudo-equilibria}, and give evidence of their defining role
  in the behaviour of the NNLIF system when a significant synaptic
  delay is considered.  The advantage is that these states are
  determined solely by the system's parameters and are derived from a
  sequence of firing rates that result from solving a recurrence
  equation. We propose a new strategy to show convergence to an
  equilibrium for a weakly connected system with large transmission
  delay, based on following the sequence of pseudo-equilibria. Unlike
  direct entropy dissipation methods, this technique allows us to see
  how a large delay favours convergence. We present a detailed
  numerical study to support our results. This study helps understand,
  among other phenomena, the appearance of periodic solutions in
  strongly inhibitory networks.
\end{abstract}
\newpage
\tableofcontents

\section{Introduction}
\label{sec:intro}

Over the past few decades, a diverse range of research has emerged in
the realm of partial differential equation (PDE) systems to model
populations of large numbers of neurons. Depending on the variables
taken into account to describe the activity of neurons, several
families of PDE have been studied: Fokker-Planck equations including
voltage and conductance variables
\cite{Caikinetic,caceres2011numerical, perthame2013voltage,
  perthame2019derivation}; Fokker-Planck equations for uncoupled
neurons \cite{NKKRC10,NKKZRC10}; population density models of
integrate and fire neurons with jumps \cite{omurtag, henry13, henry12,
  dumont2020mean}; age structured equations (time elapsed models)
\cite{PPD,PPD2,pakdaman2014adaptation} which are derived as mean-field
limits of Hawkes processes
\cite{chevallier2015microscopic,chevallier2015mean}, McKean-Vlasov
equations \cite{acebron2004noisy, mischler2016kinetic}, which are the
mean-field equations of a large number of neurons described by the
Fitzhugh-Nagumo equation \cite{fitzhugh1961impulses}, etc.

\

This article is devoted to the \emph{nonlinear noisy leaky integrate and
fire (NNLIF) neuronal system}, which is one of the simplest PDE models
\cite{lapicque,sirovich,omurtag,brunel2000dynamics,BrGe,brunel1999fast}.
We focus on its mesoscopic/macroscopic description through mean-field
Fokker-Planck type equations
\cite{caceres2011analysis,carrillo2013classical,caceres2014beyond,
  caceres2017blow,
  caceres2018analysis,caceres2019global,hu2021structure,
  roux2021towards, ikeda2022theoretical}, although it has also been
studied at the microscopic level, using stochastic differential
equations (SDE)
\cite{delarue2015global,delarue2015particle,liu2022rigorous}.  Despite
their simplicity, and the extensive scientific output devoted to them,
fundamental questions remain about their long-term behaviour.  The aim
of this article is to contribute to the understanding of this
issue. Specifically, we give evidence that the behaviour of the NNLIF
model with large delay is determined by a simple discrete system,
which gives us a sequence of novel states, that we call
\emph{pseudo-equilibria sequence}. The advantage of the discrete
system lies in its simplicity: for instance, it allows for quick
simulations that provide accurate information about the NNLIF system.
We can rapidly study issues such as the long-term behavior of the system,
the estimated time to approach equilibrium, whether the system tends
toward a steady state,
the possible appearance of periodic solutions or \emph{plateau states}, etc., all in terms of the system parameters.

\medskip

We will first give a short introduction to the model, and then give our
main results. We present the microscopic and mesoscopic descriptions
of the NNLIF models.

\paragraph{Stochastic NNLIF neuron models.}

We are interested in describing the membrane potential, which is the
potential difference $V_i$ across the cell membrane of the $i$-th
neuron in a group of $\mathcal{N}$ interconnected neurons.  A widely
used model for this is the \emph{noisy leaky integrate-and-fire
  model}, which takes into account a simple mechanism for the neuron
to approach some natural \emph{resting potential}, $V_L$, and the
effect of the inputs from other neurons.  Thus, the evolution on time
of the membrane potential for neuron $i$ is given by
\begin{equation}
  C_m \frac{d V_i}{dt} = -g_L (V_i - V_L) + I_i(t),
  \label{start-sde}
\end{equation}
where $C_m$ is the capacitance of the membrane and $g_L$ is the leak
conductance.  Equation \eqref{start-sde} is stochastic, since the term
$I_i$ is the sum of all synaptic currents produced by the stochastic
spike trains of the $C_i$ neurons connected to neuron $i$:
\begin{equation*}
  I_i(t) := \sum_{j=1}^{C_i} \sum_{k} J_{ij} \delta (t - t_{j}^k - d).
\end{equation*}
The term $\delta (t - t_{j}^k - d)$ represents a Dirac delta (in
time), modelling the contribution of the $k$-th spike from the $j$-th
presynaptic neuron. It is modulated by the synaptic strength of the
$i$-$j$ connection, $J_{ij}$ (which may be positive or negative,
depending on whether the effect is excitatory or inhibitory,
respectively). The constant $d\ge 0$ is the synaptic delay---the time
it takes for the effect of a spike to be felt by other neurons.

In addition, this model imposes the condition that
whenever a neuron's membrane potential $V_i$ reaches a certain
\emph{firing potential} (or \emph{threshold potential} or
\emph{firing threshold}) $V_F$, it
discharges by sending a spike over the network, and then $V_i$ is
reset to a \emph{reset potential} value $V_R$. It is always assumed
that $V_L < V_R < V_F$.

The following are also common assumptions for this model: networks
with sparse random connectivity, i.e.  $C/\mathcal{N} \ll 1$;
strengths $|J_{ij}| \ll V_F$, i.e., small strengths compared to the
threshold potential; and that neurons spike according to stationary,
independent Poisson process, with a constant probability $\nu$ of
emitting a spike per unit time. Under these assumptions we can
consider the mean-field limit when $\mathcal{N}\to \infty$, see
\cite{brunel2000dynamics,brunel1999fast,renart2004mean,sirovich}. The
synaptic current is then approximated by a continuous-in-time
stochastic process of Ornstein-Uhlenbeck type with the same mean and
variance as the Poissonian spike-train process, so the synaptic
current of a typical neuron is
$$
I(t)dt\approx b \nu \d t+ a \d B,
$$
where 
$B$ is a Brownian process,  and $a > 0$ is the
strength of the noise.
The parameter $b$, called the {\em connectivity
  parameter}, is an \emph{average} connectivity strength and
encodes how excitatory or inhibitory the network is. The
case $b=0$ means that neurons are not connected to each other and
the system becomes linear.  Otherwise, if $b>0$ the network is
average-excitatory, and if $b<0$ the network is average-inhibitory.

Therefore, we obtain 
the stochastic differential equation (SDE) for a typical neuron
\begin{equation*}
  C_m \d V = -g_L (V - V_L) \d t + b\nu \d t + a \d B.
\end{equation*}
By appropriate translation and scaling one can remove the constants
$C_m$, $g_L$ and $V_L$ (see
\cite{caceres2011analysis,CR-L,delarue2015global,delarue2015particle}),
and obtain the SDE, which we write in the notation we will use
throughout the paper:
\begin{equation*}
  \d V = -(V + b N(t-d)) \d t + a \d B,
\end{equation*}
where $N = N(t)$ is the \emph{mean firing rate} of the network.

\paragraph{The NNLIF PDE model.}

The PDE satisfied by the probability distribution of $V$ from the
above equation is
\begin{equation}\label{eq:edp-delay}
  \p_t p(v, t)
  + \p_v\left[h\left(v,N(t-d)\right)p(v,t)\right]-a\p^2_vp(v,t)
  =\delta (v-V_R)N(t).
\end{equation}
It requires a given non-negative, integrable initial condition
$p(v,0)=p_0(v)\geq0$, regular enough for its associated firing rate to
exist:
$$N(t)=-a\p_vp_0(V_F) \;\text{for}\; t\in[-d,0].$$
This is the PDE we address in this paper, previously studied in many
references; see for example
\cite{brunel2000dynamics,delarue2015particle,delarue2015global,liu2022rigorous}
and the references therein. As described, it arises as the mean-field
limit of a large set of $\mathcal{N}$ identical neurons which are
connected to each other in a network, when $\mathcal{N} \to
+\infty$. This PDE provides the evolution in time $t\geq 0$ of the
probability density $p(v,t)\ge 0$ of finding neurons at voltage
$v\in\left(-\infty,V_F\right]$.  A neuron spikes when its membrane
voltage reaches the firing threshold value $V_F$, discharges
immediately after, and has its membrane potential set back to the
reset value $V_R$ ($V_R<V_F$), which is described by the right hand
side of the equation and the boundary condition $p(V_F,t)=0$.  In
addition, the model includes the delay $d$ in synaptic
transmission. In this paper we assume the diffusion coefficient $a>0$
to be constant, and we always take the drift coefficient $h$ to be
$h(v, N) := -v + bN$. This makes the system nonlinear since the firing
rate $N = N(t)$ must be computed as
\begin{equation}
  \label{eq:N-def}
  N(t):=-a\,\p_v p(V_F,t)\ge 0
\end{equation}
so that the total number of neurons $\int_{-\infty}^{V_F} p(v,t) \d v$
is conserved.
Due to our definition of $N$ in \eqref{eq:N-def}, and assuming
$p(-\infty, t)=0$, the solution to the related Cauchy problem
conserves the total number of neurons:
$$
\int_{-\infty}^{V_F}p(v,t)\d v=\int_{-\infty}^{V_F}p_0(v)\d v,
\quad \mbox{for} \ t\in [0, T),
$$
where $T>0$ is the maximal time of existence
\cite{carrillo2013classical,caceres2019global,roux2021towards}.

In general, NNLIF systems describe the activity of large numbers of
neurons in terms of the distribution of the membrane potential. The
PDE-based NNLIF family includes systems with different complexity
depending on the neurophysiological properties that are taken into
account \cite{C}.  In this article we always focus on the nonlinear
Fokker-Planck equation \eqref{eq:edp-delay}. For the sake of
simplicity, we always assume that $\int_{-\infty}^{V_F}p_0(v) \d v=1$
and take a diffusion coefficient $a=1$.

\paragraph{Stationary states.}

The probability \emph{stationary states}, \emph{steady states}, or
\emph{equilibria}, $p_\infty(v)$,  of system \eqref{eq:edp-delay} are
non negative solutions to
\begin{equation}
  \left\{
    \begin{array}{l}
      \p_v\left[\left(-v+bN_\infty\right)p_\infty(v)\right]-\p^2_v p_\infty(v)=\delta (v-V_R)N_\infty,
      \\
      N_\infty:=-\p_v p_\infty(V_F),
      \ \text{and} \
      \int_{-\infty}^{V_F}p_\infty(v) \d v=1,
    \end{array}
  \right.
  \label{eq: large-delta-stationary}
\end{equation}
 and are given by
\begin{equation}
  \label{eq:stationary-state}
  p_\infty(v) = N_\infty e^{-\frac{\left(v-bN_\infty\right)^2}{2}}
  \int_{\max(v, V_R)}^{V_F}e^{\frac{\left(w-bN_\infty\right)^2}{2}} \d w.
\end{equation}
The stationary firing rate $N_\infty$ is implicitly given by
the requirement of unit mass
($\int_{-\infty}^{V_F} p_\infty(v) \dv=1$), which is translated into
the condition $N_\infty I(N_\infty)=1$, where
\begin{equation}
  \label{eq: I}
  I(N):= \int_{-\infty}^{V_F}
  e^{-\frac{\left(v-bN\right)^2}{2}}
  \int_{\max(v, V_R)}^{V_F}e^{\frac{\left(w-bN\right)^2}{2}} \d w \d v.
\end{equation}
Every stationary state corresponds to a solution of the implicit
equation $N_\infty I(N_\infty)=1$ for $N_\infty$.  The number of
steady states depends on the connectivity parameter $b$
\cite{caceres2011analysis}: for $b\le 0$ (inhibitory case or the case where
neurons are not connected to each other) there is
only one, while for $b>0$ (excitatory case) there is more variety:
there is only one if $b$ is small; there are no steady states if $b$
is large; and there are two for intermediate values of $b$ (the
existence of at least two can be proved analytically; exactly two are
observed numerically).

\

\paragraph{Known results on the PDE.}

For the nonlinear Fokker-Planck Equation \eqref{eq:edp-delay} there is
a global in time solution if either $d>0$
\cite{caceres2019global,roux2021towards}, or if $d=0$ and $b\le 0$
(average-inhibitory and linear cases) \cite{carrillo2013classical}.
However, for the average-excitatory case ($b>0$), blow-up of the
solution may occur, and in this case the maximal time of existence is
given by the first time at which the firing rate $N(t)$ diverges
\cite{carrillo2013classical, caceres2011analysis}. This is known to
happen if there is no transmission delay and the initial condition is
sufficiently concentrated around the threshold potential $V_F$, or if
the connectivity parameter $b$ is large enough \cite{roux2021towards}.
The blow-up is avoided if some transmission delay or stochastic
discharge potential are considered \cite{caceres2014beyond}.  The
analogous criteria for existence and blow-up phenomena were studied in
\cite{delarue2015global}, for the associated microscopic system.
Moreover, the notion of physical solutions to the SDE was given in
\cite{delarue2015particle} and the authors proved physical solutions
are global on time, solutions continue after system synchronization,
this is after the blow-up phenomenon.  Understanding what ``physical
solution'' may mean for the Fokker-Planck equation is an open problem.
It was numerically analysed in \cite{CR-L}, and behaviour after the
explosion was studied in \cite{dou2022dilating} by a time change of
variable which dilates the blow-up time.

\

Let us give a brief review of the existing results on asymptotic
behaviour. Most of the literature on asymptotic behaviour of the model
\eqref{eq:edp-delay} is based on the entropy dissipation method (see
\cite{caceres2011analysis, caceres2014beyond, carrillo2014qualitative,
  caceres2017blow, caceres2019global}), considering a standard
relative entropy functional to estimate the distance between the
solution $p$ and the stationary solution $p_\infty$.
This strategy has given results only for weakly connected networks
(small values of the connectivity parameter $b$), that is, almost
linear systems.  Recently, approaches based on Doeblin \& Harris's
theorems in probability have been used in the study of the asymptotic
behaviour of various equations related to neural networks, for both
elapsed time \cite{canizo2019asymptotic} and integrate-and-fire models
\cite{dumont2020mean, salort2024convergence}. Again, both strategies
(entropy method and Harris-type theorems) seem to be suitable only for
nearly linear systems. Results on the long-term behaviour for general
connectivity have been recently given in \cite{local} by using a new
strategy based on the spectral gap properties of the linearised
equation. On the other hand, \cite{caceres2019global,roux2021towards}
proved that there are no periodic solutions if $b$ is large enough,
$V_F\leq0$ and a transmission delay is considered.  Recently in
\cite{ikeda2022theoretical} it was shown that periodic solutions can
appear in the average-inhibitory case if a large delay is considered
for an approximation of the integrate-and-fire model.  Moreover, for
the more realistic model (considering as different populations the
excitatory and inhibitory neurons, or neurons with refractory periods)
and the stochastic discharge potential model, the results in
\cite{caceres2018analysis,caceres2014beyond,hu2021structure,du2024synchronization,
  zhang2024spectral} numerically show periodic solutions.

\paragraph{Main ideas and the pseudo-equilibria sequence.}

Let us describe the main ideas in the present paper. We wish to
examine a discrete sequence determined by the parameters of the
nonlinear system \eqref{eq:edp-delay}.  For a given $N \geq 0$ we
define the \emph{pseudo-equilibrium} associated to $N$ as
\begin{equation}
  \label{eq:profile-plateau}
  p_{\tiny\mbox{pseudo}}(v):= \tilde{N}e^{-\frac{\left(v-bN\right)^2}{2}}
  \int_{\max(v, V_R)}^{V_F}e^{\frac{\left(w-bN\right)^2}{2}} \d w,
  \quad \mbox{with} \quad \tilde{N}=\frac{1}{I(N)}.
\end{equation}
This is just an equilibrium for the linear version of equation
(\ref{eq:edp-delay}), where the dependence of $h$ on $N(t-d)$ is
``frozen'' at a certain value $N$. The constant $\tilde{N}$ guarantees
unit mass: $\int_{-\infty}^{V_F} p_{\tiny\mbox{pseudo}}(v) \dv =
1$. The profile $p_{\tiny\mbox{pseudo}}$ is an equilibrium of the
NNLIF system (\ref{eq:edp-delay}) if and only if and only if
$\tilde{N} = N$, that is, if and only if $N I(N)=1$.

Our guiding idea is that for a large delay $d$, the NNLIF system
(\ref{eq:edp-delay}) behaves almost as a linear system in time
intervals of length $d$. Assume that we start with a constant initial
condition on $[-d,0]$, which has a firing rate $N_{0,\infty}$. Then
the NNLIF system is exactly linear on $[0,d]$; $d$ is large, we
expect the solution at $t=d$ to be close to the linear equilibrium
associated to a \emph{fixed} firing rate $N_{0,\infty}$, that is, the
pseudo-equilibrium associated to a certain firing rate
$N_{1,\infty}$. This corresponds to taking $N=N_{0,\infty}$ and
$\tilde{N}=N_{1,\infty}$ in \eqref{eq:profile-plateau}.
On the time interval $[d,2d]$, if on $[0,d]$ the
firing rate has remained close to $N_{1,\infty}$, we expect the
solution to again behave linearly, and approach another
pseudo-equilibrium with firing rate $N_{2,\infty}$. Iterating this we
define a sequence of pseudo-equilibria determined solely by the
initial value $N_{0,\infty}$ (and the given parameters $V_R$, $V_F$,
$b$). We are  able to rigorously prove that this sequence
represents the behaviour of the system with long delays  for the
case of small $b$ (see Section \ref{sec:analytical-results}),
but we are unable to do so for general values of $b$.
However, we present strong numerical evidence that this sequence does represent
the overall behaviour of the system (cf. Section
\ref{sec:numerical}). In particular, this sequence shows periodic
behaviour  for $b$ negative enough;
converges to the stable equilibrium of (\ref{eq:edp-delay}) in the
region where solutions (\ref{eq:edp-delay}) do the same; and approaches
a \emph{plateau state} when solutions to (\ref{eq:edp-delay}) are
expected to do this.

Let us describe this plateau state more closely, since it is also an
important evidence for the usefulness of the pseudo-equilibria
sequence. Recently, in \cite{CR-L} we nume\-ri\-cally observed the
formation of a new profile, which des\-cribes networks with uniformly
distributed membrane potentials between the reset value $V_R$ and the
threshold value $V_F$. We called these states ``plateau''
distributions.  These profiles appear in two situations: 1) strongly
connected systems (high connectivity parameter $b$) with synaptic
delay $d>0$, and 2) systems with $b=V_F-V_R$, without delay ($d=0$) or
with very small transmission delay value.  Numerically we observe that
plateau states appear to be related to pseudo-equilibria in the
following way: Figure \ref{fig:comparison_profiles-plateau} shows the
comparison of the plateau distributions found for $b=1.5$ (left plot)
and $b=2.2$ (right plot), with delay transmission, and several
pseudo-equilibria.  There is a high coincidence between the plateau
distributions and the profiles \eqref{eq:profile-plateau} as $N$
increases.  Furthermore, on the way to the plateau state, the dynamics
of the particle system appears to pass through pseudo-equilibria of
the form \eqref{eq:profile-plateau} as $N$ increases. For $b=1.5$ the
system \eqref{eq:edp-delay} has two steady states, and it can evolve
towards a plateau distribution (shown on the left plot) if the initial
condition is far from the stationary state with lowest firing rate.
For $b=2.2$ there are no stationary solutions and the system evolves
towards a plateau distribution.

\begin{figure}[H]
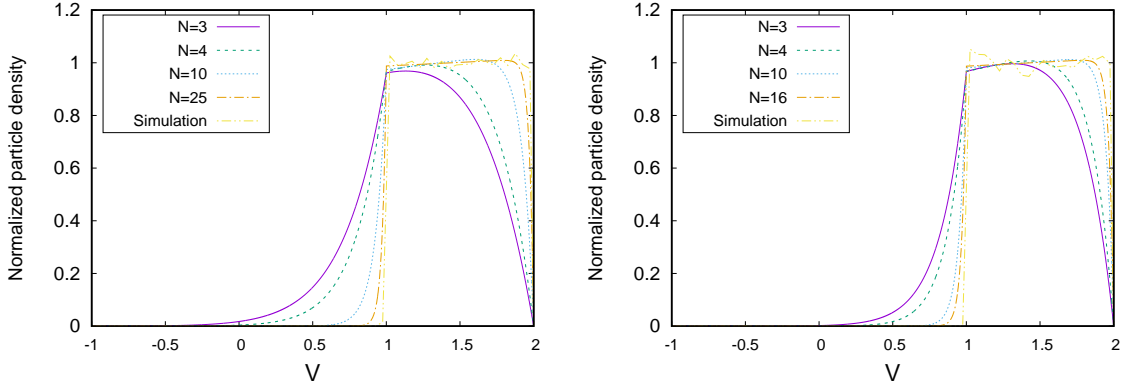

  \begin{center}
    \begin{minipage}[c]{0.48\linewidth}
      \begin{center}
	\includegraphics[width=\textwidth]{plateau_1.eps}
      \end{center}
    \end{minipage}
    \begin{minipage}[c]{0.48\linewidth}
      \begin{center}
        \includegraphics[width=\textwidth]{plateau_2.eps}
      \end{center}
    \end{minipage}
  \end{center}
  \caption{{\bf Comparison  of ``plateau'' distributions with
      profiles \eqref{eq:profile-plateau}}.
      Graphs taken from \cite{CR-L}.
    {\emph Left:} $b=1.5$.
    {\emph Right:}  $b=2.2$.}
  \label{fig:comparison_profiles-plateau}
\end{figure}

This behaviour suggests that we may think of the pseudo-equilibria
sequence as a tool to analyse the asymptotic behaviour of the system
when a large synaptic delay is taken into account. The aims of this
work are twofold:
\begin{enumerate}
\item to analyse the sequence of pseudo-equilibria determined by
  the associated discrete system, and
  
\item to establish a connection between the long-term behaviour of the
  pseudo-equilibria sequence and the solutions of the nonlinear system
  \eqref{eq:edp-delay}.
\end{enumerate}

\paragraph{Organisation of the paper.}

In Section \ref{sec:pseudo-equilibria-sequence} we define and analyse
the firing rate and the pseudo-equilibria sequences. In Section
\ref{sec:analytical-results} we prove the exponential convergence to
equilibrium for the nonlinear system \eqref{eq:edp-delay} for small
connectivity parameter $ b $ and large delay $ d $, by ``following''
the sequence of pseudo-equilibria, under some technical assumptions on
the linear systems. A similar result was previously established
through the use of the entropy method, and we present  here
different strategy which considers the relative entropy to the
sequence of pseudo-equilibria.  This approach looks promising to study
a broader range of phenomena (such as periodic solutions or the
approach of the plateau state), and served as a catalyst for the main
ideas in our paper \cite{local}, where we prove local convergence
results depending on the values of the transmission delay and the
connectivity parameter.  In Section \ref{sec:numerical} we provide the
numerical evidence to support the connection between the sequence of
pseudo-equilibria studied in Section
\ref{sec:pseudo-equilibria-sequence} with the nonlinear system.  This
numerical work also suggests several conjectures on the possible
extension of the results of Section \ref{sec:analytical-results} to
cases with an arbitrary value of the parameter $b$. Section
\ref{sec:conclusions} is devoted to conclusions, discussion and
possible extensions of our work.  We also include an appendix with
some of the technical results needed to prove Theorem
\ref{th:relations-Nb-negative}.
\section{Firing rate and pseudo-equilibria sequences}
\label{sec:pseudo-equilibria-sequence}

This section deals with the study of sequences of firing rates,
$\left\{N_{k,\infty}\right\}_{k\ge 0}$, and pseudo-equilibria,
$\left\{p_{k,\infty}\right\}_{k\ge 0}$. Let us first give their
definition:

\begin{dfn}[Firing rate sequence]
  \label{def: firing-seq}
  Given $b \in \R$, $V_R<V_F \in \R$, and $I$ the function
  \eqref{eq: I}, the \emph{firing rate sequence}
  $\left\{N_{k,\infty}\right\}_{k\ge 0}$ associated to an initial
  firing rate $N_{0,\infty} > 0$ is recursively defined by
  \begin{equation} \label{eq:sequence-N}
    N_{k+1,\infty}:=\frac{1}{I(N_{k,\infty})} \qquad k= 0,1,2,\dots
  \end{equation}
\end{dfn}
\noindent
This sequence is well defined since $I(N)>0$ for all $N >
0$. Associated to a firing rate sequence $\{N_{k,\infty}\}_{k\ge 0}$
we define the following pseudo-equilibria sequence:
\begin{dfn}[Pseudo-equilibria sequence]
  Given a firing rate sequence $\left\{N_{k,\infty}\right\}_{k\ge 0}$
  its associated \emph{pseudo-equilibria sequence} is the sequence
  \begin{equation}\label{eq:sequence-p}
    p_{k,\infty}(v) = N_{k,\infty}e^{-\frac{\left(v-bN_{k-1,\infty} \right)^2}{2}}
    \int_{\max(v, V_R)}^{V_F}e^{\frac{\left(w-bN_{k-1,\infty}
        \right)^2}{2}}\d w,
    \quad k=1,2,\ldots
  \end{equation}
  \label{def: pseudo-seq}
\end{dfn}
The behaviour of these sequences depends only on the values of the
connectivity parameter $b$, the diffusion coefficient $a$ (which we
consider to be 1), the reset potential $V_R$ and the threshold
potential $V_F$. Their definition is completely detached from the
dynamics of the nonlinear system \eqref{eq:edp-delay}.

In the following theorems we show the monotonicity of the firing rate
sequence $\left\{N_{k,\infty}\right\}_{k\ge 0}$, in terms of the
number of solutions to the implicit equation
\begin{equation}\label{eq: implicit}
	NI(N)=1, \ \mbox{with} \ 0\le N.
\end{equation}
We have denoted the unknown in Equation \eqref{eq: implicit} by $N$
and hope that it will not cause any confusion with the firing rate of
the nonlinear system \eqref{eq:edp-delay}. In this section we never
consider the system \eqref{eq:edp-delay}, since we are only concerned
with the behaviour of the firing rate sequence.

The number of solutions to Equation \eqref{eq: implicit} was studied
in \cite{caceres2011analysis}. Analytically it was shown that for
$b<0$ or for a small positive value of $b$ there is only one solution,
there is no solution if $b>0$ is large, and there are at least two for
intermediate positive values of $b$.  The proof is based on the study
of the monotone function $1/I(N)$ (increasing if $b>0$ and decreasing
if $b<0$). However, rigorously proving the exact number of solutions
in the excitatory case ($b>0$) is not easy, due to the complexity of
$I$. Numerically we observe that for positive $b$ there are a maximum
of two solutions, and at the value of $b$ where the equation switches
from having two solutions to none the function $1/I(N)$ has slope
equal to 1 and positive second derivative (see right plot of Figure
\ref{fig: function_I}). These are the scenarios we consider in the
following theorem for the case $b>0$:

\begin{thm}[Monotonicity of the firing rate sequence
  $\left\{N_{k,\infty}\right\}_{k\ge 0}$ for excitatory networks]
  \label{th:relations-Nb-positive}
  Let us consider $0<b$ and a firing rate sequence
  $\left\{N_{k,\infty}\right\}_{k\ge 0}$ (as given by Definition
  \ref{def: firing-seq}).
  \begin{enumerate}
  \item If Equation \eqref{eq: implicit} has a unique solution $N^*$
    then:
    \begin{enumerate}
    \item If $N \mapsto 1/I(N)$ crosses the diagonal $N \mapsto N$ (in
      the sense that $N<1/I(N)$ for $N<N^*$ and $1/I(N)<N$ for
      $N^*<N$), then:
      \begin{itemize}
      \item For $N_{0,\infty} \geq N^*$, the sequence
        $\left\{N_{k,\infty}\right\}_{k\ge 0}$ is decreasing
        and tends to $N^*$.
      \item For $ N_{0,\infty}\le N^* $, the sequence 
        $\left\{N_{k,\infty}\right\}_{k\ge 0}$ is increasing
        and tends to $N^*$.
      \end{itemize}
    \item If $ N\mapsto1/I(N) $ stays on one side of the diagonal
      $ N\mapsto N $ (in the sense that $N<1/I(N)$ for all
      $N\neq N^*$), then:
      \begin{itemize}
      \item For $N_{0,\infty} > N^*$, the sequence
        $\left\{N_{k,\infty}\right\}_{k\ge 0}$ diverges.
      \item For $ N_{0,\infty}\le N^* $, the sequence
        $\left\{N_{k,\infty}\right\}_{k\ge 0}$ is increasing and tends
        to $N^*$.
      \end{itemize}
    \end{enumerate}
  \item If Equation \eqref{eq: implicit} has no solutions, then
    $\left\{N_{k,\infty}\right\}_{k\ge 0}$ diverges.
    
  \item If Equation \eqref{eq: implicit} crosses the diagonal
    $N \mapsto N$ at exactly two points $N_1^* < N_2^*$ (in the sense
    that $1/I(N) > N$ for $N < N_1^*$, $1/I(N) < N$ for
    $N_1^* < N < N_2^*$, and $1/I(N) > N$ for $N > N_2^*$) then:
    \begin{itemize}
    \item If $ N_{0,\infty}\le N_1^* $ then
      $\left\{N_{k,\infty}\right\}_{k\ge 0}$ is an increasing sequence
      which tends to $N_1^*$.
    \item If $ N_1^*\le N_{0,\infty}\le N_2^* $
      then 
      $\left\{N_{k,\infty}\right\}_{k\ge 0}$ is a decreasing
      sequence which tends to $N_1^*$.
    \item If $N_2^* < N_{0,\infty} $ then
      $\left\{N_{k,\infty}\right\}_{k\ge 0}$ diverges.
    \end{itemize}
    
  \end{enumerate}
\end{thm}

\begin{proof}
  The function $I$ (see \eqref{eq: I}) is a $C^\infty(0,\infty)$
  function, which was studied in the proof of
  \cite[Theorem 3.1]{caceres2011analysis},
  and  can be  rewritten as
  $
  I(N)=\int_0^\infty e^{-s^2/2} e^{-sbN}
  \frac{e^{sV_F}-e^{sV_R}}{s}
  \ \d s.
  $
  Its $k$-th order derivative is
  \begin{equation}
    \label{eq: I-derivate}
    I^{(k)}(N)=(-b)^k
    \int_0^\infty  s^{k-1}e^{-s^2/2} e^{-sbN}
    \left(e^{sV_F}-e^{sV_R}\right)
    \ \d s,
  \end{equation}
  and  for $b$ positive:
  \begin{itemize}
  \item  $\frac{1}{I(N)}$ is an increasing function.
  \item $\frac{1}{I(0)}<\infty$.
  \item $\lim_{N\to \infty}\frac{1}{I(N)}=\infty$.
  \end{itemize}
  The firing rate sequence is a solution to the recursive equation
  \begin{equation}
    \label{eq: difference-equation-exc}
    N_{k+1,\infty}=f(N_{k,\infty}),\quad 
    f:\R_0^+ \to \R^+,
    \ f(x):=\frac{1}{I(x)},
  \end{equation}
  where $ \R^+:=\{x\in\R \mid x>0\} $,
  $ \R_0^+:=\{x\in\R \mid x\ge0\} $, and we have seen that $f$ is an
  increasing function. The behaviour of this type of discrete system
  is well known: its solutions are monotonic, and either diverge or
  converge to an equilibrium of the system.  With the hypotheses of
  the theorem we have the following possibilities, since $f(0)>0$:
  \begin{itemize}
  \item In case 1(a),
    \begin{itemize}
    \item $N<f(N)$ for $N<N^*$.
    \item $f(N)<N$ for $N^*<N$.
    \end{itemize}
  \item In case 1(b), $N<f(N)$ for $N\neq N^*$.

  \item In case 2 we have $N<f(N)$ for all $N\ge 0$, since $0<f(0)$.

  \item In case 3, 
    \begin{itemize}
    \item $N < f(N)$ for
      $N\in [0,N_1^*)\cup (N_2^*,+\infty)$.
    \item $f(N)<N$ for $N\in (N_1^*,N_2^*)$.
    \item $N < f(N)$ for $N \in (N_2^*,+\infty)$.
    \end{itemize}
  \end{itemize}
  In cases 1 and 3, this is just part of the hypotheses of the result;
  in case 2 this is a consequence of $f(0) > 0$, and the facts that
  $f$ is continuous and (by assumption in case 2) does not touch the
  diagonal.
  \begin{enumerate}
  \item If the initial condition is taken in an interval in which
    $N<f(N)$ then the sequence is increasing, because
    $N_{k,\infty}<f(N_{k,\infty})=N_{k+1,\infty}$.
  \item If the initial condition is taken in an interval in which
    $f(N)<N$ then the sequence is decreasing,
    because    $N_{k+1,\infty}=f(N_{k,\infty})<N_{k,\infty}$.
  \end{enumerate}
  Whenever the sequence is decreasing, it is bounded below by a
  constant solution of Equation \eqref{eq: difference-equation-exc}
  and therefore converges to it (because there are no more equilibria
  in that interval).
  If the sequence is increasing, it either converges to an
  equilibrium, if bounded, or diverges.
\end{proof}

\begin{rem}\label{rem:derivate-bpositive}
  As a consequence of the convergence of the firing rate sequences,
  given in Theorem \ref{th:relations-Nb-positive}, it follows:
 \begin{enumerate}
 \item If $ N^* $ is the only solution to Equation
   \eqref{eq: implicit}, then:
   \begin{enumerate}
   \item In case 1.(a) of Theorem \ref{th:relations-Nb-positive},
     $\frac{d}{dN}\frac{1}{I(N)}|_{N^*} \leq 1$.
   	\item In case 1.(b) of Theorem \ref{th:relations-Nb-positive}, $\frac{d}{dN}\frac{1}{I(N)}|_{N^*}= 1$.
   \end{enumerate}
   This behaviour is shown in plot on the right of Figure \ref{fig:
     function_I} with $ b=0$ or $ b=0.5 $ for the first case and
   $ b=2.1 $ the second case.
\item If Equation \eqref{eq: implicit} has two solutions: $ N_1^* $
  and $ N_2^* $ ($N_1^* <N_2^*$), then:
  \begin{itemize}
  \item   $0\le \frac{d}{dN}\frac{1}{I(N)}|_{N_1^*}\le 1$.
  \item   $1 \leq \frac{d}{dN}\frac{1}{I(N)}|_{N_2^*}$.
  \end{itemize}
  This behaviour is shown in the plot on the right of Figure \ref{fig:
    function_I} with $ b=1.5$.
  
\end{enumerate}
\end{rem}
We analyse the inhibitory case ($b<0$) in the following theorem, where
two different behaviours appear although in this case Equation
\eqref{eq: implicit} has only one solution.

\begin{thm}[Monotonicity of the firing rate sequence
  $\left\{N_{k,\infty}\right\}_{k\ge 0}$ for inhibitory networks]
  \label{th:relations-Nb-negative}
  Assume $b<0$ and consider a firing rate sequence
  $\left\{N_{k,\infty}\right\}_{k\ge 0}$. Then there exists a value
  $b^*< 0$ of the connectivity parameter $b$ such that:
  \begin{itemize}
  \item If $b^*< b<0$, the unique
    solution $N^*$ to Equation \eqref{eq: implicit} is asymptotically
    stable regarding firing rate sequences.
  \item    If $b< b^*$, there exist two values $N^-$, $N^+$,
    $0\le \min(N^-, N^+)<N^* <\max(N^-, N^+)$, such that  the sequence
    $\left\{N_{k,\infty}\right\}_{k\ge 0}$  tends to the 2-cycle $\{ N^-,N^+\}$.
  \end{itemize}
\end{thm}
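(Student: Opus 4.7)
The plan is to reduce the problem to the analysis of the second iterate $g := f\circ f$ with $f(N):=1/I(N)$, by analogy with the proof of Theorem \ref{th:relations-Nb-positive}. Starting from \eqref{eq: I-derivate}, for $b<0$ we have $(-b)^k>0$ and the integrand is positive, so $I^{(k)}(N)>0$ for every $k\ge 1$; in particular $I$ is strictly increasing, $f$ is strictly decreasing, and $N\mapsto NI(N)$ is strictly increasing, giving a unique solution $N^*$ of \eqref{eq: implicit}.

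Since $f$ is decreasing, the sequence $\{N_{k,\infty}\}$ itself is not monotone. I would instead pass to the even and odd subsequences $\{N_{2k,\infty}\}$ and $\{N_{2k+1,\infty}\}$, both governed by the \emph{increasing} map $g$. The fixed points of $g$ are either the unique fixed point $N^*$ of $f$, or else come in pairs forming genuine 2-cycles of $f$. The monotone dynamical-system analysis of $g$, identical in structure to the one developed for Theorem \ref{th:relations-Nb-positive}, then yields the claimed behaviour once we can locate the fixed points of $g$ and the sign of $g(N)-N$ on each subinterval they determine.

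The bifurcation is controlled by
\begin{equation*}
g'(N^*)=f'(N^*)^2=\bigl(I'(N^*)(N^*)^2\bigr)^2,
\end{equation*}
where I used $I(N^*)=1/N^*$. I would show that $b\mapsto f'(N^*(b))$ is continuous on $(-\infty,0)$, tends to $0$ as $b\to 0^-$ (because $I'\equiv 0$ when $b=0$), and crosses $-1$ as $b\to-\infty$ (because $N^*(b)\to 0$ while $I'(N^*(b))$ grows fast enough through the factor $(-b)$ in \eqref{eq: I-derivate}). This supplies a threshold $b^*<0$ defined by $f'(N^*(b^*))=-1$. For $b^*<b<0$, one has $|f'(N^*)|<1$ and $g'(N^*)<1$: $N^*$ is the unique fixed point of $g$, it attracts every orbit of $g$ by the monotone recursion argument, and therefore $N_{k,\infty}\to N^*$. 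For $b<b^*$, $N^*$ is a repelling fixed point of the increasing map $g$; the signs of $g(N)-N$ at $0$ (positive, since $g(0)=f(f(0))>0$), just below $N^*$ (negative), just above $N^*$ (positive) and as $N\to\infty$ (negative, because $g(N)\to f(0)<\infty$) produce via the intermediate value theorem two additional fixed points $N^-\in(0,N^*)$ and $N^+\in(N^*,\infty)$ of $g$. Since $f$ has $N^*$ as its unique fixed point and is decreasing, $\{N^-,N^+\}$ is necessarily a 2-cycle of $f$, and the monotone analysis of $g$ on the intervals $[0,N^-]$, $[N^-,N^*]$, $[N^*,N^+]$ and $[N^+,\infty)$ gives the convergence of the even and odd subsequences to $N^-$ and $N^+$ in the order dictated by $N_{0,\infty}$.

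The main obstacle, which I expect to be deferred to the appendix announced in Section \ref{sec:intro}, is quantitative: establishing the existence (and uniqueness) of $b^*$ through a careful analysis of $f'(N^*(b))$, and ruling out the appearance of further fixed points of $g$ in the bifurcated regime, so that no period-$4$ or higher cycles can arise and $\{N^-,N^+\}$ is globally attracting. Both points amount to shape estimates on $g$ --- convexity/concavity on $[0,N^*]$ and $[N^*,\infty)$ --- that must be extracted from the integral representation \eqref{eq: I-derivate}. Once these shape properties are in hand, the convergence statements collapse to the one-dimensional monotone iteration argument already used for Theorem \ref{th:relations-Nb-positive}.
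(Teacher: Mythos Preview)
Your approach is essentially the same as the paper's: both reduce to the second iterate $g=f\circ f$, use monotone one-dimensional dynamics on the even/odd subsequences, and define $b^*$ through $f'(N^*(b^*))=-1$ by tracking $b\mapsto f'(N^*_b)$ from $0$ at $b=0$ down past $-1$.

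Two points of comparison are worth noting. First, the paper's convergence argument is slightly more economical than your sign-analysis of $g(N)-N$: since $f$ is decreasing, one checks directly that $\{N_{2k,\infty}\}$ and $\{N_{2k+1,\infty}\}$ are monotone (by comparing $N_{0,\infty}$ with $N_{2,\infty}$ and iterating) and bounded in $[0,1/I(0)]$; the limits $N^\pm$ then automatically satisfy $f(N^+)=N^-$, $f(N^-)=N^+$, so they are either both $N^*$ or a genuine $2$-cycle. No explicit location of fixed points of $g$ via the intermediate value theorem is needed. Second, what you correctly flag as the main quantitative obstacle---the monotonicity of $b\mapsto f'(N^*_b)$, hence uniqueness of $b^*$---is indeed deferred to the appendix, where it is proved by a direct computation ending in a Cauchy--Schwarz inequality on the integral representations. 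Your heuristic for why $f'(N^*_b)\to-\infty$ as $b\to-\infty$ (competition between $N^*(b)\to 0$ and the factor $-b$ in $I'$) is not made rigorous in the paper either; the limit is asserted and is consistent with the monotonicity.

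Finally, the shape estimates you anticipate for ruling out further fixed points of $g$ (hence uniqueness and global attractivity of the $2$-cycle for $b<b^*$) are \emph{not} established in the paper: the remark following the theorem explicitly records that uniqueness of the $2$-cycle and the critical case $b=b^*$ remain unproved and are only checked numerically. So your final paragraph is accurate, but you should not expect those details to be supplied elsewhere.
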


\begin{proof}
  \textbf{Step 1: The sequence of firing rates is either convergent or
    asymptotically 2-periodic.} The proof, as in Theorem
  \ref{th:relations-Nb-positive}, is based on the study of the
  recursive equation:
  \begin{equation}
    \label{eq: difference-equation-inh}
    N_{k+1,\infty}=f(N_{k,\infty}),\quad 
    \ f(x):=\frac{1}{I(x)},
  \end{equation}
  where the function $I$, defined in \eqref{eq: I}, is a
  $C^\infty(0,\infty)$ increasing function if $b<0$ (see \eqref{eq:
    I-derivate} with $k=1$).  Therefore, for the inhibitory case,
  $f:[0, +\infty) \to \left[0,\frac{1}{I(0)} \right]$ since the
    function $1/I$ is decreasing, and, with the possible exception of
    the initial datum, $N_{0,\infty}$, all the other terms in the
    sequence fall within the interval
    $\left[0, \frac{1}{I(0)}\right]$. Moreover, in this case, Equation
    \eqref{eq: implicit} has an unique solution, $N^*$, because
    $1/I(N)$ is a decreasing continuous function which tends towards
    0, and $0<1/I(0)$.

    The behaviour of this type of discrete system is easy to study,
    because $f$ is a decreasing function and the solutions are
    bounded; they tend to the unique equilibrium or towards a 2-cycle.
    The proof of this result is as follows: denote by $N^*$ the
    equilibrium of \eqref{eq: difference-equation-inh}, and assume
    that $N_{0,\infty}\neq N_{2,\infty}$ (otherwise, the solution
    would be exactly a 2 cycle, or constantly equal to $N^*$ if
    $N_{0,\infty} = N_{2,\infty} = N^*$). Thus, using that $f$ is a
    decreasing function and $F:=f\circ f$ is increasing:
  \begin{itemize}
  \item If $N_{0,\infty} < N_{2,\infty}$, then
    $N_{3,\infty}=f(N_{2,\infty}) \le f(N_{0,\infty}) = N_{1,\infty}$
    and $N_{2,\infty}=F(N_{0,\infty})\le F(N_{2,\infty}) = N_{4,\infty}$.
    Therefore, by induction we prove that
    $\{N_{2k,\infty}\}_{k\ge 0}$ is an increasing sequence and
    $\{N_{2k+1,\infty}\}_{k\ge 0}$ is a decreasing sequence.
  \item If $N_{0,\infty} > N_{2,\infty}$, then
    $N_{3,\infty}=f(N_{2,\infty})\ge N_{1,\infty}=f(N_{0,\infty})$
    and $N_{2,\infty}=F(N_{0,\infty})\ge N_{4,\infty}=F(N_{1,\infty})$.
    Therefore, by induction we prove that
    $\{N_{2k,\infty}\}_{k\ge 0}$ is a decreasing sequence and
    $\{N_{2k+1,\infty}\}_{k\ge 0}$ is an increasing sequence.    
  \end{itemize}
  Moreover:
  \begin{itemize}
  \item If $N_{0,\infty}\le N^*$, then
    $N^*=f(N^*)\le f(N_{0,\infty})=N_{1,\infty}$ and
    $N_{2,\infty}=f(N_{1,\infty})\le f(N^*)=N^*$.  Therefore, by
    induction we prove that
    $0\le N_{2k,\infty}\le N^*\le N_{2k+1,\infty}\le \frac{1}{I(0)}$
    for $k=0,1,2,\ldots$.
  \item If $ N^*\le N_{0,\infty}$, then
    $f(N_{0,\infty})=N_{1,\infty}\le N^*=f(N^*)$ and
    $ f(N^*)=N^* \le N_{2,\infty}=f(N_{1,\infty})$.  Therefore, by
    induction we prove that
    $0\le N_{2k+1,\infty}\le N^*\le N_{2k,\infty}\le \frac{1}{I(0)}$
    for $k=0,1,2,\ldots$.
  \end{itemize}
  So that in either case, both sequences, $\{N_{2k,\infty}\}_{k\ge 0}$ and
  $\{N_{2k+1,\infty}\}_{k\ge 0}$, are monotonic and bounded, hence convergent.
  Thus,  considering $$N^-:=\lim_{k\to \infty} N_{2k+1,\infty} \;\text{and}\;
  N^+:=\lim_{k\to \infty} N_{2k,\infty},$$ we obtain, since $f$ is
  a continuous function, that $f(N^+)=N^-$ and there
  are two possibilities:
  \begin{enumerate}
  \item $N^-=N^+=N^*$, so that the system tends towards the equilibrium.
  \item $N^-\neq N^+$, so that the system tends towards the 2-cycle
    $\{N^-,N^+\}$, where $\min(N^-, N^+) < N^* < \max(N^-, N^+)$.
  \end{enumerate}
  
  \
  
  \textbf{Step 2: Determination of $b_*$.} The remainder of the proof
  concentrates on determining $b^*$, which must be the largest value
  at which $N^*$ is asymptotically stable for our discrete iteration.
  When we need to emphasize the dependence of $I$ on $b$ we will use
  the notation $I(b,N)$ (and $f(b,N) := 1/I(b,N)$). We note that $I$
  is a decreasing function of $b$, since
  $$\partial_b I(b,N)=-N\int_0^\infty
  e^{-s^2/2}e^{-sbN}\left(e^{sV_F}-e^{sV_R}\right) \d s,$$ and
  consequently $1/I$ is increasing as a function of $ b $ (see Figure
  \ref{fig: function_I}).  Therefore, if $b_1<b_2<0$, then
  $\frac{1}{I(b_1,N)}<\frac{1}{I(b_2,N)}$ and the solution to Equation
  \eqref{eq: implicit} for $b=b_1$, is less than the solution for
  $b=b_2$, that is, $N^*_{b_1}<N^*_{b_2}$, with $N_b^*$ denoting the
  (unique) solution to \eqref{eq: implicit} for a given $b \leq 0$.
  Alternatively, one can prove that $N_b^*$ is increasing in $b$ by
  deriving implicitly in the equation $N_b^*I(b,N_b^*)=1$ and
  observing that the derivative is positive, as follows:
  $$\frac{d N_b^*}{db} = \frac{-N_b^* \partial_bI(b,N_b^*)}{I(b,N_b^*)+
    N_b^*\partial_NI(b,N_b^*)}\ge 0.$$
  Consequently, $N^*_b$ is an increasing function bounded from below
  by 0, so it has a limit when $b$ tends to $-\infty$.  On the other
  hand, $ \overline{f}(N):=\lim_{b\to-\infty}f(b,N) $ loses continuity
  at $ N=0 $, because $\overline{f}(0)>0$, while it vanishes for
  $0<N<\overline{f}(0)$. Therefore,
  $$\overline{N}^*:=\lim_{b\to -\infty}N^*_b=0.$$
  This is because if the limit was positive, $\overline{N}^*>0$, then
  using the explicit expression of $I$ we get
  $\lim_{b\to -\infty}f(b,N_b^*)=0$, and therefore
  $$
  0=\lim_{b\to -\infty}f(b,N_b^*)=\lim_{b\to -\infty}N_b^*= \overline{N}^*>0,
  $$
  which is a contradiction.
 
  (This was also proven in \cite[Lemma 2.2]{ikeda2022theoretical}).
  Thus, in a certain sense, the system ``loses'' its equilibrium when
  $b$ tends to $-\infty$, due to the loss of continuity of the
  function $1/I$ in the limit.
	
  To determine whether or not the equilibrium of Equation \eqref{eq:
    difference-equation-inh}, $N^*_b$, is asymp\-to\-ti\-cally stable,
  we check whether $\vert f'(N^*_b)\vert$ is less than or greater than 1.
  With this aim, we
  define
  $$g(b):=f'(N^*_b)=\frac{d}{dN}\frac{1}{I(N)}\Big|_{N^*_b}=
  \frac{-\p_NI(b,N^*_b)}{I(b,N^*_b)^2} =-{N_b^*}^2\p_NI(b,N^*_b),$$
  which is a continuous increasing function in $(-\infty,0]$, due to
  the positivity of $ g'(b) $, as proved in Lemma
  \ref{lem:monotonicity_g}.  We note that
  $\lim_{b\to -\infty}g(b)=-\infty$ and $g(0)= f'(N^*_0)=0$, thus
  there exists $b^*$ such that $g(b^*)=-1$.  Then, if $b<b^*$, thus
  $g(b)<-1$, which means that $N_b^*$ is unstable for the discrete
  iteration, while for $b>b^*$, $g(b)>-1$ and $N_b^*$ is stable.  And
  as we proved above, if the equilibrium is unstable, the system
  tends to a 2-cycle.
\end{proof}

\begin{rem}
  We do not know how to prove analytically some further details in
  Theorem \ref{th:relations-Nb-negative}, due to the difficulty of
  working with the function $I$: the global stability of the equilibrium
  in the case $b^*\le b<0$, 
  and the uniqueness of the two cycles, in
  the case $b<b^*$. However, we can check both numerically.  Figure
  \ref{fig:cycles} shows the function $ F(N)=f\circ f (N) $, for cases
  with $ b<b^* $. $F$ has only $ 3 $ fixed points, i.e., the
  equilibrium $ N^* $ and cycle $ \left\{{N^-,N^+}\right\} $
  corresponding to each value of $ b $. Moreover, the 2-cycles appear
  as a bifurcation of the equilibrium, which is asymptotically stable
  for $b=b^*$, as this value is the starting point for the formation
  of 2-cycles.  We also note that the 2-cycle
  $ \left\{{N^-,N^+}\right\} $ becomes
  $\left\{0,\frac{1}{I(0)}\right\}$, approximately
  $ \left\{0,0.12\right\} $, when $b$ tends to $-\infty$, as $N_{b}^*$
  tends to 0. For $ b>b^* $ we observe an unique fixed point for
  $ F(N) $, corresponding with the equilibrium $ N^* $, so it is
  globally stable.
\end{rem}
\begin{rem}\label{rem:derivate-bnegative}
  As a consequence of the convergence of the firing rate sequences,
  given in Theorem \ref{th:relations-Nb-negative}, it follows:
  \begin{enumerate}
  \item If $b_*<b<0$, then $-1\le\frac{d}{dN}\frac{1}{I(N)}|_{N^*}<0$,
    for  $N^*$ the only solution to Equation \eqref{eq: implicit}.
  \item If $b<b_*$, then   $\frac{d}{dN}\frac{1}{I(N)}|_{N^*}\le -1$,
    for  $N^*$ the only solution to Equation \eqref{eq: implicit}. 
  \end{enumerate}
  We have numerically estimated $b^* \approx -9.4$ (see Figure
  \ref{fig:cycles}). The plot on the left of Figure \ref{fig:
    function_I} shows the function $\frac{1}{I(N)}$ for $ b<-9.4 $ and
  for $ b>-9.4 $, where we can observe its slope at $N^*$.
\end{rem}

\begin{figure}[h]
	\begin{center}
		\begin{minipage}[c]{0.49\linewidth}
			\begin{center}
				\includegraphics[width=\textwidth]{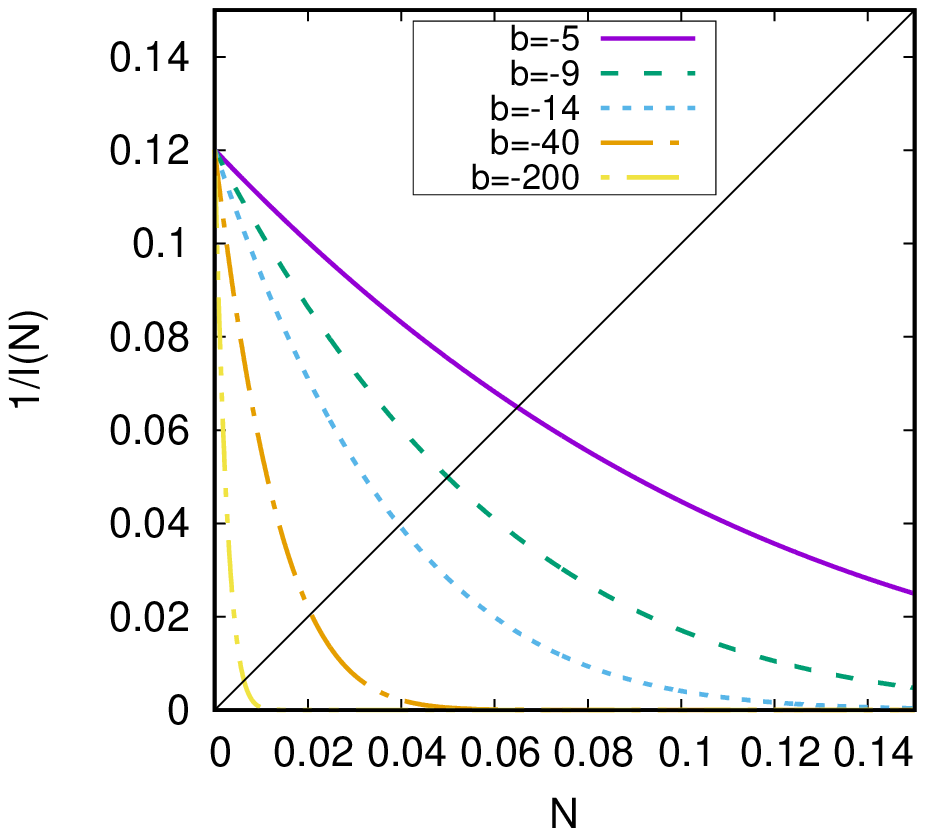}
			\end{center}
		\end{minipage}
		\begin{minipage}[c]{0.49\linewidth}
			\begin{center}
				\includegraphics[width=\textwidth]{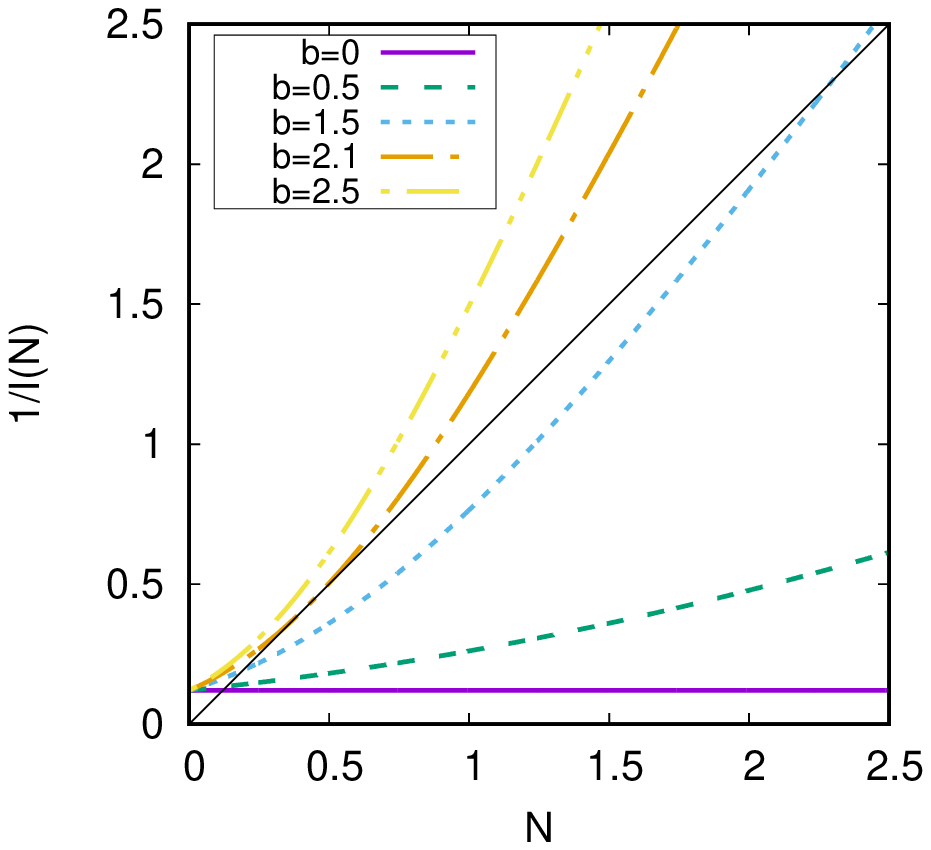}
			\end{center}
		\end{minipage}
	\end{center}
	\caption{{\bf Function $\boldsymbol{\frac{1}{I(N)}}$ (see
            \eqref{eq: I}) for different values of the connectivity
            parameter $\boldsymbol{b}$.}}
	\label{fig: function_I}
\end{figure}
   \begin{figure}[h]
	\begin{center}
		\includegraphics[width=0.49\linewidth]{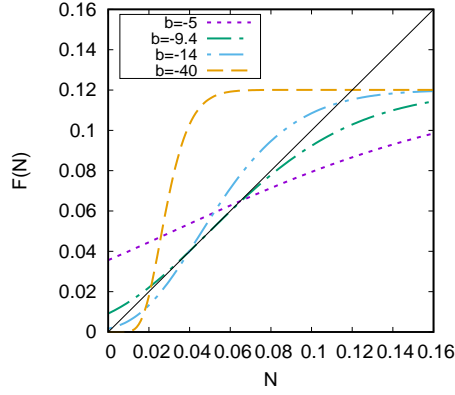}
	\end{center}
	\caption{{\bf Function $\boldsymbol{F:=f\circ f}$ with
            $\boldsymbol{f(N)=1/I(N)}$ (see \eqref{eq: I}) for
            different (negative) values of the connectivity parameter
            $\boldsymbol{b}$.}}
	\label{fig:cycles}
\end{figure}     
We recall that the number of solutions to the Equation \eqref{eq:
  implicit} gives us the number of steady states of the nonlinear
system \eqref{eq:edp-delay}. This number depends on the value of $b$
(see \cite{caceres2011analysis}).  For the excitatory case ($b>0$)
there is only one steady state if $b\le V_F-V_R$. If $V_F-V_R< b$
there are two possibilities: there are at least two steady states
(numerically no more than two steady states have been observed), or,
if $b$ is large enough, there is not steady states. The case
$b=V_F-V_R$ is very interesting because this is the limit value for
which the notion of physical solutions makes sense.  Numerically this
case was analysed in \cite{CR-L} and without delay ($d=0$) the
particle system evolves to a plateau distribution, if the initial
condition is concentrated enough around $V_F$.  In a limit sense, this
plateau profile is a stationary state of the system for
$b=V_F-V_R$. However, the system tends to the unique steady state with
bounded firing rate, if a large transmission delay is considered. For
the inhibitory case ($b<0$) there is only one steady state.
Therefore, Theorems \ref{th:relations-Nb-positive} and
\ref{th:relations-Nb-negative} can also be read in terms of the number
of the steady states of the nonlinear system \eqref{eq:edp-delay}.

\begin{rem}
	Theorem \ref{th:relations-Nb-positive} does not include
	the case with more than two equilibria,
	which can be easily extended following the same strategy. 
	For instance, if Equation \eqref{eq: implicit} has three
	solutions (i.e.  the nonlinear system \eqref{eq:edp-delay} has
	three steady states), $N^*_1<N^*_2<N^*_3$, the sequence
	$\left\{N_{k,\infty}\right\}_{k\ge 0}$ tends to $N^*_1$, if
	$N_{0,\infty} \in (0,N^*_2)$, and to $N^*_3$, if
	$N_{0,\infty} \in (N^*_2,+\infty)$.
	In general, the behaviour of the sequence
	$\left\{N_{k,\infty}\right\}_{k\ge 0}$ depends on whether
	$N_kI(N_k)<1$ or $1<N_kI(N_k)$. In the first case the sequence
	is increasing, while in the second case the sequence is decreasing.
	When the sequence is decreasing it converges to some solution to
	Equation \eqref{eq: implicit}, while if the sequence is increasing
	it converges or diverges depending on whether it is bounded
	by a solution to the implicit equation or not.
\end{rem}

The following theorem shows that the  pseudo-equilibria sequence 
$\left\{p_{k,\infty}(v)\right\}_{k\ge 0}$
described in \eqref{eq:sequence-p} tends to a stationary solution
$p_\infty$ of the nonlinear system \eqref{eq:edp-delay}
if its related sequence $\left\{N_{k,\infty}\right\}_{k\ge 0}$
(see \eqref{eq:sequence-N}) converges to a finite value $N_\infty$,
which is the firing rate of $p_\infty$.
\begin{thm}\label{thm:delta_convergence}
  Consider any nonnegative sequence
  $\left\{N_{k,\infty}\right\}_{k\ge 0}$, and its related
  pseudo-equilibria sequence $\left\{p_{k,\infty}(v)\right\}_{k\ge 0}$
  described in \eqref{eq:sequence-p}. Assume
  $\lim_{k\to\infty} N_{k,\infty}= N_\infty < +\infty$. Then there
  exists $k_0\in \N$ such that for all $k\ge k_0$ and all $N \in \R$
  the following inequalities hold:
  \begin{align}
    \label{eq: relation pk-Nk}
    \| p_{k+1,\infty}(v)-p_{k,\infty}(v) \|_\infty
    &\le
    C_{N_\infty} \vert N_{k,\infty}- N_{k-1,\infty}\vert,
    \\
    \label{eq: relation- d_v pk-Nk}
    \| \p_vp_{k+1,\infty}(v)-\p_vp_{k,\infty}(v) \|_\infty
    &\le
    C_{N_\infty} \vert N_{k,\infty}- N_{k-1,\infty}\vert,
    \\
    \label{eq:pk-Nk-L2}
    \| p_{k+1,\infty}(v)-p_{k,\infty}(v) \|_{L^2(\varphi_N)}
    &\le
    C_{N_\infty} \vert N_{k,\infty}- N_{k-1,\infty}\vert,
    \\
    \label{eq:pk-Nk-H2}
    \| \p_vp_{k+1,\infty}(v)-\p_vp_{k,\infty}(v) \|_{L^2(\varphi_N)}
    &\le
    C_{N_\infty} \vert N_{k,\infty}- N_{k-1,\infty}\vert,
\end{align}
where $C_{N_\infty}>0$ is a constant that depends only on $N_\infty$,
$b$, $V_R$ and $V_F$, and also on $N$ in the case of
(\ref{eq:pk-Nk-L2})--(\ref{eq:pk-Nk-H2}).  Similarly, for all
$k \geq k_0$ and all $N \in \R$,
  \begin{align}
    \label{eq:pk-Nk-inf}
    \| p_{k,\infty}(v)-p_{\infty}(v) \|_\infty
    &\le
    C_{N_\infty} \vert N_{k-1,\infty}- N_{\infty}\vert,
    \\
    \label{eq:d_vpk-Nk-inf}
    \| \p_vp_{k,\infty}(v)-\p_vp_{\infty}(v) \|_\infty
    &\le
    C_{N_\infty} \vert N_{k-1,\infty}- N_{\infty}\vert,
    \\
    \label{eq:pk-Nk-L2-inf}
    \| p_{k,\infty}(v)-p_{\infty}(v) \|_{L^2(\varphi_N)}
    &\le
    C_{N_\infty} \vert N_{k-1,\infty}- N_{\infty}\vert,
    \\
    \label{eq:pk-Nk-H2-inf}
    \| \p_vp_{k,\infty}(v)-\p_vp_{\infty}(v) \|_{L^2(\varphi_N)}
    &\le
    C_{N_\infty} \vert N_{k-1,\infty}- N_{\infty}\vert,
  \end{align}
  with $p_\infty = p_\infty(v)$ the steady state of the nonlinear
  system \eqref{eq:edp-delay} with firing rate $N_\infty$.
\end{thm}

\begin{rem}
  In other words, this result states that pseudo-equilibrium $p_{k,\infty}$
  given in \eqref{eq:sequence-p}, associated to a number $N_{k-1}$,
  depends continuously on $N_{k-1}$ in a wide variety of norms
  including the Sobolev $W^{1,\infty}$ norm and weighted $L^2$ and
  $H^2$ norms. We have chosen to state it for a sequence, since it is
  the exact result which will be later used in Section
  \ref{sec:analytical-results}.
\end{rem}

\begin{proof}
  We recall the expression of the pseudo-equilibrium 
  \begin{equation*}\label{eq: equilibrium-linear}
    p_{k,\infty}(v) = N_{k,\infty}e^{-\frac{\left(v-bN_{k-1,\infty}\right)^2}{2}}
    \int_{\max(v,
      V_R)}^{V_F}e^{\frac{\left(w-bN_{k-1,\infty}\right)^2}{2}}\d w,
  \end{equation*}
  where $N_{k,\infty}=I(N_{k-1,\infty})^{-1}$ and
  $I\left(N\right)=\int_{-\infty}^{V_F}
  e^{-\frac{\left(v-bN\right)^2}{2}}
  \int_{\max(v, V_R)}^{V_F}e^{\frac{\left(w-bN\right)^2}{2}} \d w \d v $.
  We define
  \begin{equation*}
    g(v,N) := e^{-\frac{\left(v-bN\right)^2}{2}} \int_{\max(v,
      V_R)}^{V_F}e^{\frac{\left(w-bN\right)^2}{2}}\d w
    ,
    \qquad
    h(v,N) := \frac{g(v,N)}{I(N)},
  \end{equation*}
  so that $h(v,N_{k-1,\infty}) = p_{k,\infty}(v)$ and
  $h(v,N_\infty) = p_\infty(v)$ (since it must hold
  $1 = N_\infty I(N_\infty)$). Through a first-order Taylor expansion
  in $N$ we can write the difference between two consecutive elements
  of the sequence $ \left\{p_{k,\infty}\right\}_{k\ge 0} $ as
  \begin{equation*}
    p_{k+1,\infty}(v)-p_{k,\infty}(v)=\p_Nh(v,\xi_k)\left(N_{k,\infty}-N_{k-1,\infty}\right),
  \end{equation*}
  where $ \xi_k $ is a value located between $ N_{k-1,\infty} $ and
  $ N_{k,\infty} $. Due to the convergence of the sequence
  $N_{k,\infty}$, there exists $k_0\in \N$ such that $N_{k-1,\infty}$
  and $N_{k,\infty}\in (N_\infty/2, 2 N_\infty)$ for all $k\ge
  k_0$. In particular, $\xi_k \in (N_\infty/2, 2 N_\infty)$ for all
  $k\ge k_0$. Therefore, it is enough to show that
  \begin{equation}
    \label{eq:1}
    |\p_Nh(v,N)| < C_{N_\infty},
  \end{equation}
  uniformly in $v\in (-\infty,V_F]$ and
  $N \in (N_\infty/2, 2 N_\infty)$, with some constant $C_{N_\infty}$
  which depends only on $N_\infty$, $b$, $V_R$ and $V_F$. This would prove inequality
  \eqref{eq: relation pk-Nk}.

  By computing $\p_Nh(v,N) $ we obtain
  \begin{equation}\label{eq:h_prime}
    \p_Nh(v,N)=\frac{\p_Ng(v,N)}{I(N)}-\frac{I'(N)}{I(N)^2}g(v,N).
  \end{equation}
  In order to show (\ref{eq:1}) it is enough to study each term: from
  the expression of $g(v,N)$ one sees that it is bounded as needed,
since $e^{-\frac{(v-bN)^2}{2}}$ and $ve^{-\frac{(v-bN)^2}{2}}$
are  uniformly  bounded in $v\in (-\infty,V_F]$ and
  $N \in (N_\infty/2, 2 N_\infty)$ ;
  $I(N)$ is uniformly bounded below for
  $N \in (N_\infty/2, 2 N_\infty)$, and $|I'(N)|$ (which can be
  explicitly written) is bounded above in the same interval. Finally,
  one can explicitly write $\p_Ng(v,N)$,
  $$
  \p_Ng(v,N)=b(v-bN)g(v,N)-be^{-\frac{(v-bN)^2}{2}}
  \left( e^{\frac{(V_F-bN)^2}{2}}-e^{\frac{(\max(v,V_R)-bN)^2}{2}}\right),
  $$
  and show it is also uniformly
  bounded in the needed range of $v$ and $N$. This shows \eqref{eq:
    relation pk-Nk}. With a completely analogous calculation we prove
  (\ref{eq:pk-Nk-inf}), since
  \begin{equation*}
    p_{k,\infty}(v)-p_{\infty}(v) = \p_Nh(v,\xi_k) \left(N_{k-1,\infty}-N_{\infty}\right),
  \end{equation*}
  for some $\xi_k$ between $N_{k-1,\infty}$ and $N_\infty$. By
  checking that $\|\p_N h(\cdot, M)\|_{L^2(\varphi_{N})}$ is bounded
  uniformly for $M \in (N_\infty/2, 2 N_\infty)$ we also prove
  (\ref{eq:pk-Nk-L2}), and very similarly (\ref{eq:pk-Nk-L2-inf}).

  For the derivatives with respect to $v$ we write
  \begin{equation*}
    \p_v p_{k, \infty} (v)
    = \frac{1}{I(N_{k-1, \infty})} \p_v g(v, N_{k-1.\infty})
    =: \widetilde{h}(v,N_{k-1,\infty})
  \end{equation*}
  and
  \begin{equation*}
    \p_v p_{k+1, \infty} (v)-
    \p_v p_{k, \infty} (v)
    = \p_N \widetilde{h}(v, \xi_k) (N_{k,\infty} - N_{k-1,\infty}).    
  \end{equation*}
  A similar analysis of $\p_N \widetilde{h}(v, N)$ proves the
  remaining points (\ref{eq: relation- d_v pk-Nk}),
  (\ref{eq:pk-Nk-H2}), (\ref{eq:d_vpk-Nk-inf}) and
  (\ref{eq:pk-Nk-H2-inf}).
\end{proof}

In the inhibitory case, the firing rate sequence may converge to a
2-cycle (see Theorem \ref{th:relations-Nb-negative}).  We show the
long-term behaviour of the pseudo-equilibria sequences in those cases
in the following theorem.

\begin{thm}
  \label{thm:two_cicle_convergence}
  Let us consider the firing rate sequence
  $\left\{N_{k,\infty}\right\}_{k\ge 0}$ given in
  \eqref{eq:sequence-N}, and its related pseudo-equilibria sequence
  $\left\{p_{k,\infty}(v)\right\}_{k\ge 0}$ described in
  \eqref{eq:sequence-p}. Assume the pseudo-equilibria sequence
  $ \left\{N_{k,\infty}\right\}_{k\ge0} $ tends to the 2-cycle
  $ \left\{N^-,N^+\right\} $. Then there exists $k_0\in \N$ such that
  for all $k\ge k_0$ the following inequalities hold:
  \begin{equation} \label{eq:relation-1pk-Nk-1}
    \Vert p_{2k,\infty}(v)-p_{2k-2,\infty}(v)\Vert_\infty \le C_{N^-} \vert N_{2k-1,\infty}-
    N_{2k-3,\infty}\vert, 
  \end{equation}
  \begin{equation} \label{eq:relation-pk-Nk-2}
    \Vert p_{2k+1,\infty}(v)-p_{2k-1,\infty}(v)\Vert_\infty \le C_{N^+} \vert N_{2k,\infty}-
    N_{2k-2,\infty}\vert, 
  \end{equation}
  where $C_{N^-},C_{N^+}>0$ depend on $N^-$ and $N^+$, respectively, and $b$, $V_F$ and $V_R$.
	Similarly, for all $k \ge k_0$ 
	\begin{equation*}
              \Vert p_{2k,\infty}(v)-p^-(v)\Vert_\infty \le C_{N^-} \vert N_{2k-1,\infty}-
    N^-\vert, 
	\end{equation*}
	\begin{equation*}
           \Vert p_{2k+1,\infty}(v)-p^+(v)\Vert_\infty \le C_{N^+} \vert N_{2k,\infty}-
    N^+\vert, 
        \end{equation*}
        where $p^-(v)$, $p^+(v)$ are pseudo-equilibria of the
        nonlinear system \eqref{eq:edp-delay} (see
        \eqref{eq:profile-plateau}), with $p^-$ associated to $N^+$
        and $p^+$ associated to $N^-$.	
\end{thm}
\begin{proof}
  This is a direct consequence of Theorem \ref{thm:delta_convergence},
  by considering the sequences $\{N_{2k-1, \infty}\}_{k \geq 1}$ and
  $\{N_{2k, \infty}\}_{k \geq 0}$, which converge to  $N^-$ and $N^+$, respectively.
  The associated pseudo-equilibria
  sequences are then $\{p_{2k, \infty}\}$ and $\{p_{2k+1, \infty}\}$,
  respectively, and the statement is a consequence of Theorem
  \ref{thm:delta_convergence} applied to them.
\end{proof}
We point out that the behaviour of the pseudo-equilibria sequence is
determined by the limit of the firing rate sequence, in case it exists, for
all $b\in \R$. And there is a relation between the nonlinear system
\eqref{eq:edp-delay} and that long-term behaviour:
	\begin{itemize}
		\item In the excitatory case ($b>0$) the  pseudo-equilibria
		sequence $\{p_{k,\infty}(v)\}_{k\ge 0}$:
		\begin{itemize}
			\item converges to the unique steady state of the nonlinear system
			\eqref{eq:edp-delay}, if $b$ is small.
		      \item converges to the steady state
                        with lower firing rate
			of the nonlinear system \eqref{eq:edp-delay}, if the
			system has two stationary solutions and if $\{N_{k,\infty}\}_{k\ge 0}$ has
			finite limit.
                      \end{itemize}
	      \item In the inhibitory case ($b<0$) the pseudo-equilibria
                sequence 
		$\{p_{k,\infty}(v)\}_{k\ge 0}$ tends to the unique stationary solution
		$ p_\infty(v) $ of the nonlinear system    \eqref{eq:edp-delay},
                if $b^*<b$. Otherwise,
		if $b<b^*$, it tends to a 2-cycle
		$ \left\{p^-(v), p^+(v)\right\} $, which are pseudo-equilibria of
		the nonlinear system  \eqref{eq:edp-delay}.
                \end{itemize}

        To prove the convergence of the pseudo-equilibria sequence we use the
        fact that the limit of $\{N_{k,\infty}\}_{k\ge 0}$ is finite or is a
        2-cycle, so it could not be used
	in case the sequence diverges. However, in that case, it could be prove
	that the sequence of pseudo-equilibria $\{p_{k,\infty}(v)\}_{k\ge 0}$
	tends to plateau distribution (point-wise in
        $(-\infty,V_R)\cup (V_R,V_F)$).

\
        
In the following section we use Theorem \ref{thm:delta_convergence} to
prove the convergence to equilibrium of solutions to the nonlinear
system \eqref{eq:edp-delay} in the weakly connected case, by following
the associated pseudo-equilibria sequence. As presented, this
technique only works in weakly connected networks, but it might be
possible to use it for a wider range of $b$.

\section{Convergence to equilibrium along the pseudo equilibria
  sequence for weakly connected networks}
\label{sec:analytical-results}

In this section we study the long-term behaviour of the nonlinear
system \eqref{eq:edp-delay}, con\-si\-de\-ring large transmission
delay values, by following the pseudo-equilibria \eqref{eq:sequence-p}
for weakly connected networks.  To do this, we consider the solution
to the Cauchy problem associated with \eqref{eq:edp-delay} (remember
we assume $a=1$):
\begin{equation}
  \left\{
    \begin{array}{l}
      \p_t p(v, t)+
      \p_v\left[\left(-v+bN(t-d)\right)p(v,t)\right]-\p^2_v p(v,t)
      =\delta (v-V_R)N(t),
      \\
      N(t)=-\p_v p(V_F,t), \ \mbox{for}  \ t\ge 0,
      \\
      p(0,v)=p_0(v),
      \ \mbox{and,} \ N(t)=- \p_v p_0(V_F) \quad t \in [-d,0],
    \end{array}
  \right.
  \label{eq: large-delta}         
\end{equation}
We view this solution as a sequence of functions, considering time
intervals of length $d$. Our initial condition is always a constant on
$[-d,0]$, and we observe that the system becomes linear for
$0\le t<d$, since $N(t-d)$ is constant.  Therefore, for $0\le t<d$,
the Cauchy problem \eqref{eq: large-delta} behaves like a linear
problem of the form
\begin{equation}
  \label{eq: linear}         
  \left\{
    \begin{array}{l}
      \p_t p(v, t)+
      \p_v\left[\left(-v+b\overline{N}\right)p(v,t)\right]-\p^2_vp(v,t)=\delta (v-V_R)N(t),
      \\
      N(t)=- \p_v p(V_F,t), \quad t\ge 0, 
      \\
      p(0,v)=p_0(v), \ \mbox{with} \ \overline{N}=-\p_v p_0(V_F)\ge 0. 
    \end{array}
  \right.
\end{equation}
\begin{figure}[h]
  \begin{center}
		\begin{minipage}[c]{0.50\linewidth}
                  \includegraphics[width=\textwidth]{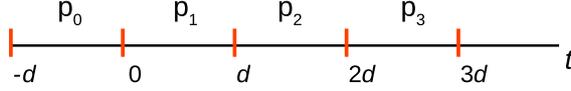}
                  \end{minipage}
  \end{center}
  \caption{{\bf Schematic representation of the solution
      to the Cauchy problem \eqref{eq: large-delta}  in time through
      the sequence $\boldsymbol{p_{k}(v,t)}$ given by \eqref{eq:p}.}}
  \label{fig: scheme}
\end{figure}
During that interval of time, $[0,d)$, we obtain $N(t)$, which appears
in the drift term of the following period of time with size $d$,
$[d,2d)$. Proceeding in the same way for the following time intervals
$[kd,(k+1)d)$ with $k=2,3, \ldots$, the nonlinear problem \eqref{eq:
  large-delta} is equivalent to a sequence of linear problems of the
type
\begin{equation*}
  \left\{
    \begin{array}{l}
      \p_t p(v, t)+
      \p_v\left[h\left(v,\overline{N}(t)\right)p(v,t)\right]-\p^2_vp(v,t)=\delta (v-V_R)N(t),
      \\
      N(t)=-\p_v p(V_F,t), \quad t\ge 0, 
    \end{array}
  \right.
  \label{eq: linear-general}         
\end{equation*}
where $\overline{N}(t)$ is a known function.  To better describe the
idea consider the following notation (see Figure \ref{fig: scheme}):
\begin{equation}\label{eq:p}
  p_{k+1}(v,t):= p(v,t+kd),
  \ \text{with} \ t\in\left[0,d\right], \
  v\in(-\infty,V_F]  \
  \text{and} \ k=0,1,2...
\end{equation}
In other words, $p_{k+1}(v,t)\equiv p(v,\bar{t})$, with
$\bar{t}:=t+kd\in\left[kd,(k+1)d\right)$. Similarly,
\begin{equation*}
  N_k(t) := N(t + kd),
  \qquad t \in [-d,0], k = 0, 1, 2, \dots
\end{equation*}
Hence we write the nonlinear problem \eqref{eq: large-delta} in any
interval $ (kd,(k+1)d) $, as follows: for $t\in
(0,d)$ 
and $k=1,2,\ldots$
\begin{equation}
  \left\{
    \begin{array}{l}
      \p_t p_k(v, t)+
      \p_v\left[\left(-v+bN_{k-1}(t-d)\right)p_k(v,t)\right]-\p^2_v p_k(v,t)=\delta (v-V_R)N_k(t),
      \\
      N_k(t)=-\p_v p_k(V_F,t), \ p_k(V_F)=0, 
    \end{array}
  \right.
  \label{eq:system-k}
\end{equation}
Its related stationary problem is given by
\begin{equation}
  \left\{
    \begin{array}{l}
      \p_v\left[\left(-v+bN_{k-1,\infty}\right)p_{k,\infty}(v)\right]-\p_v^2 p_{k,\infty}(v)=\delta (v-V_R)N_{k,\infty},
      \\
      N_{k,\infty}=- \p_v p_{k,\infty}(V_F), \ \text{and} \
      \int_{-\infty}^{V_F}p_{k,\infty}(v) \d v=1,
    \end{array}
  \right.
  \label{eq: system-k-stationary} 
\end{equation}
whose unique solution is the pseudo-equilibria sequence (see
\eqref{eq:sequence-p})
\begin{equation}\label{eq: solution-system-k-stationary}
  p_{k,\infty}(v) = N_{k,\infty}e^{-\frac{\left(v-bN_{k-1,\infty} \right)^2}{2}}
  \int_{\max(v, V_R)}^{V_F}e^{\frac{\left(w-bN_{k-1,\infty}
      \right)^2}{2}} \d w,
\end{equation}
given in terms of the firing rate sequence \eqref{eq:sequence-N}, with
$N_{0,\infty}:=-\p_vp_0(V_F)$.

Our purpose is to show that one may prove convergence to equilibrium
of bounded solutions $p$ to the nonlinear system by following the
sequence of pseudo-equilibria $p_{k,\infty}$. For this we need to
assume several properties of the linear system, which are reasonable
in view of our recent results in \cite{local}. In order to describe
these assumptions we consider the space $X$ given by
\begin{equation}
  \label{eq:space-X}
  X := \{ u \in \mathcal{C}(-\infty, V_F] \cap \mathcal{C}^1(-\infty, V_R]
  \cap \mathcal{C}^1[V_R, V_F] \mid u(V_F) = 0 \ \text{and} \ \|u\|_X < \infty\},
\end{equation}
with
\begin{equation}\label{eq:norm-X}
  \|u \|_X :=
  \|u \|_\infty + \|\p_vu \|_\infty
  + \|u \|_{L^2(\varphi)} + \|\p_vu \|_{L^2(\varphi)}
\end{equation}
and
\begin{equation*}
  \varphi(v)
  := \exp\left( \frac{v^2}{2} \right),
  \qquad v \in (-\infty, V_F].
\end{equation*}
This is the space of continuous functions on $(-\infty, V_F]$ which
are ${\cal C}^1$ except possibly at $v=V_R$, where they must still
have one-sided derivatives; and which are in the space $H^1$ with the
Gaussian weight $\varphi$ (so they are strongly decaying functions for
$v \to -\infty$). The main merit of this space is that the firing rate
$-\p_v p(t, V_F)$ is a continuous operator in this norm.
Our main assumptions on the associated linear equation are the following:
\begin{enumerate}
\item \textbf{(Spectral gap.)} The semigroup $e^{tL}$ associated to
  the linear equation \eqref{eq: linear} has a spectral gap in the
  space $X$. That is, there exist constants $\lambda > 0$, $C \geq 1$
  such that for all initial conditions $u_0 \in X$ with zero integral,
  it holds that
  \begin{equation}
    \label{eq:gap-X}
    \| e^{t L} u_0 \|_{X}
    \leq Ce^{-\lambda t} \| u_0 \|_{X}
    \qquad \text{for all $t \geq 0$.}
  \end{equation}
  We assume this property holds uniformly for any $\bar{N}$ close to an
  equilibrium firing rate $N_\infty$ of the nonlinear equation \eqref{eq:edp-delay}.
\item \textbf{(Regularization property.)}  There exist constants
  $\lambda>0$, $\tilde{C} \geq 1$ such that for all initial conditions
  $u_0\in L^2(\varphi)$ with zero integral, the following inequality
  holds:
  \begin{equation}\label{eq:gap-reg-X}
    \Vert e^{tL}u_0\Vert_X
    \le \tilde{C} t^{-3/4}e^{-\lambda t}
    \Vert u_0 \Vert_{L^2{(\varphi)}}
    \qquad \text{for all $ t>0 $}.
  \end{equation}
  Again, we assume this property holds uniformly for any $\bar{N}$ close to an
  equilibrium firing rate $N_\infty$ of the nonlinear equation \eqref{eq:edp-delay}.
  We point out that the exponent $-3/4$ here is the same as for the
  standard Fokker-Planck equation $$\p_t p = \p_v^2 p + \p_v(v p).$$
\end{enumerate}
Similar results to these assumptions are given in
\cite{local}.
A proof of them can be given by using
the techniques  developed there, but this is not the aim of this paper
and we defer a more detailed study of these spectral properties to a
future work.

\medskip
We will prove the following result:

\begin{thm}\label{thm:convergence}
  Take $b\in\R$. Let us consider an initial condition $p_0 \in X$ for
  the nonlinear system \eqref{eq:edp-delay} such that the firing rate
  sequence $\left\{N_{k,\infty}\right\}_{k\ge0}$ with initial
  condition $N_{0,\infty}=-\p_v p_0(V_F)$ converges to a certain value
  $N_\infty > 0$ (which must then satisfy $N_\infty I(N_\infty) =
  1$). Let $p_\infty$ be the stationary solution to
  \eqref{eq:edp-delay} with firing rate $N_\infty$. Assume the
  spectral gap and regularisation properties stated before this
  theorem. Let $p = p(v,t)$ be the solution to \eqref{eq:edp-delay}
  with initial data $p(v,t) = p_0(v)$ for all $v \in (-\infty, V_F]$
  and all $t \in [-d,0]$. Let us assume that there exists $K > 0$ such
  that
  \begin{equation}
    \label{eq:p-bounded}
    \|p(.,t)\|_X\le K
    \qquad \text{for all $t\ge 0$.}
  \end{equation}
  Then there exist $d_0, b_0, Q, \mu>0$ such that
  the solution $p$ to the nonlinear system \eqref{eq:edp-delay} with
  $d>d_0$, $|b|<b_0$, and initial condition $p_0$ satisfies
  \begin{equation}
    \|p(.,t)-p_\infty(.)\|_X\le Qe^{-\mu t}\|p_0-p_\infty\|_X
    \qquad \text{for all $t\ge 0$.}
  \end{equation}
\end{thm}

\begin{rem}
  Assumption (\ref{eq:p-bounded}) in Theorem \ref{thm:convergence}
  merits an explanation. Our result only gives the behaviour of
  solutions which are \emph{uniformly bounded in time} (which is
  consistent with convergence to equilibrium). It is known that
  solutions may blow up if the delay $d=0$, and when $d > 0$ we do not
  know whether there may be solutions with diverging values of $N(t)$
  as $t \to +\infty$. Our result applies only to solutions whose
  firing rate $N(t)$ is uniformly bounded for all times.  Once we know
  the firing rate $N(t)$ is bounded it may be possible to carry out
  regularisation estimates to show that $\|p(t,\cdot)\|_X$ is
  uniformly bounded for all times, but we assume the latter stronger
  condition to avoid these technical details.
\end{rem}

The proof of Theorem \ref{thm:convergence} is based on: 1) our
spectral hypotheses on $e^{tL}$, 2) the uniform boundedness hypothesis
on $\|p(t,\cdot)\|_X$ and 3) Theorem \ref{thm:delta_convergence},
which shows that
\begin{equation} \label{eq:delta-k}
  \| p_{k+1,\infty}-p_{k,\infty}\|_X \le C_{N_\infty} \vert N_{k,\infty}-
  N_{k-1,\infty}\vert,
\end{equation}
and, therefore $ \| p_{k+1,\infty}-p_{k,\infty}\|_X \to 0$ as
$\lim_{k\to\infty} N_{k,\infty}= N_\infty$. We will also need the
following two elementary lemmas. The first one is a discrete version
of the variation of constants technique, which can be checked in a
straightforward way and we give without proof:
\begin{lem}[Discrete variation of constants]
  \label{lem:variation-of-constants}
  Let $M$ be a linear operator on a certain vector space $E$, and
  $\{b_k\}_{k\ge 0}$ any sequence in $E$. Then, given $a_0 \in E$, the
  sequence
  \begin{equation*}
    a_k := M^ka_0+\sum_{i=0}^{k-1}M^{k-i-1}b_i, \quad k=1,2,\ldots
  \end{equation*}
  is the (unique) solution to the linear equation
  \begin{equation}\label{eq:discrete-variation-1}
    a_k=Ma_{k-1}+b_{k-1}, \quad k=1,2,\ldots.
  \end{equation}
\end{lem}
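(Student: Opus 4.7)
The statement is a standard discrete analogue of the variation of constants formula, so the plan is a straightforward induction together with a separate argument for uniqueness.

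First I would verify existence by induction on $k$. The base case $k=1$ is immediate: the proposed formula gives $a_1 = Ma_0 + b_0$, which matches \eqref{eq:discrete-variation-1}. For the inductive step, assuming that
\begin{equation*}
a_k = M^k a_0 + \sum_{i=0}^{k-1} M^{k-i-1} b_i
\end{equation*}
satisfies the recursion up to index $k$, I would apply $M$ to this expression, add $b_k$, use linearity of $M$, and reindex the sum to obtain
\begin{equation*}
Ma_k + b_k = M^{k+1} a_0 + \sum_{i=0}^{k-1} M^{k-i} b_i + b_k = M^{k+1} a_0 + \sum_{i=0}^{k} M^{k-i} b_i,
\end{equation*}
which is precisely the proposed formula at index $k+1$. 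This shows that the explicit sequence indeed satisfies \eqref{eq:discrete-variation-1}.

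For uniqueness, suppose $\{a_k\}$ and $\{\tilde a_k\}$ are two sequences satisfying \eqref{eq:discrete-variation-1} with the same initial value $a_0 = \tilde a_0$. Setting $c_k := a_k - \tilde a_k$, linearity of $M$ yields $c_k = Mc_{k-1}$ with $c_0 = 0$, so a trivial induction gives $c_k = 0$ for all $k\ge 0$, proving uniqueness.

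There is essentially no obstacle here: the argument uses only linearity of $M$ and elementary index manipulation in a finite sum, so no further structure on $E$ (topology, norm, completeness) is needed. The lemma is purely algebraic, and its role is only to set up the subsequent application, where $M$ will be the linear semigroup associated with the frozen-coefficient linear problem acting on $X$, $a_k$ will correspond to the deviation $p_k(\cdot,d) - p_{k,\infty}$, and $b_k$ will encode the mismatch coming from \eqref{eq:delta-k} between consecutive pseudo-equilibria.
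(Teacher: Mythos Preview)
Your proof is correct and follows essentially the same approach as the paper: both verify by direct substitution that the explicit formula satisfies the recursion \eqref{eq:discrete-variation-1}. You are in fact slightly more complete, since you include the (trivial) uniqueness argument, which the paper asserts in the statement but omits in its proof.
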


The second lemma needed for the proof of Theorem \ref{thm:convergence}
is a discrete version of Gronwall's Lemma.

\begin{lem}[Discrete Gronwall's Lemma]\label{lem:gronwall} If
  $ \phi_k $ is a positive sequence and $W \in \R$, $V\geq 0$ are
  constants such that
 $   \phi_k \leq W+V\sum_{i=0}^{k-1}\phi_i$ for $k\ge0$,
  then
  $ \phi_k \leq We^{kV} $
  for all $k \geq 0$.
\end{lem}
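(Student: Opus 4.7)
The plan is to prove this by induction on $k$, which is the standard route for discrete Gronwall-type inequalities. The target inequality is $\phi_k \le W e^{kV}$, which for $k=0$ reduces to $\phi_0 \le W$ — and this is exactly what the hypothesis says when $k=0$, since the sum $\sum_{i=0}^{-1}$ is empty. So the base case is immediate.

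For the inductive step, assuming $\phi_i \le W e^{iV}$ for every $i < k$, I would plug this into the hypothesis to obtain
\begin{equation*}
  \phi_k \;\le\; W + V\sum_{i=0}^{k-1}\phi_i \;\le\; W\Bigl(1 + V\sum_{i=0}^{k-1} e^{iV}\Bigr),
\end{equation*}
and then it remains to show that $1 + V\sum_{i=0}^{k-1} e^{iV} \le e^{kV}$. The key elementary inequality is $1 + V \le e^V$ (valid for all $V \in \R$, in particular for $V \ge 0$), which after multiplying by $e^{iV}$ gives
\begin{equation*}
  V e^{iV} \;\le\; e^{(i+1)V} - e^{iV}.
\end{equation*}
Summing from $i=0$ to $k-1$ the right-hand side telescopes to $e^{kV} - 1$, so $V\sum_{i=0}^{k-1} e^{iV} \le e^{kV} - 1$, which rearranges exactly to $1 + V\sum_{i=0}^{k-1} e^{iV} \le e^{kV}$. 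Combined with the estimate above, this yields $\phi_k \le W e^{kV}$ and closes the induction.

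There is no real obstacle here; the only mildly delicate point is the telescoping step, and one has to note that the argument implicitly requires $W \ge 0$ in order for the multiplication by $W$ in the inductive step to preserve the inequality direction (which is consistent with the hypothesis $\phi_k > 0$ combined with $\phi_0 \le W$ forcing $W > 0$ whenever the sequence is nontrivial). An alternative, essentially equivalent route would be to compare $\phi_k$ with the explicit solution $a_k$ of the linear recurrence $a_k = W + V\sum_{i=0}^{k-1} a_i$, solve this recurrence to get $a_k = W(1+V)^k$, and then use $(1+V)^k \le e^{kV}$; I would prefer the telescoping version above since it directly gives the claimed exponential bound without an intermediate step.
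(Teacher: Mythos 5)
Your proof is correct. The base case $k=0$ is indeed immediate from the empty-sum convention, and the inductive step combined with the telescoping estimate $Ve^{iV}\le e^{(i+1)V}-e^{iV}$ (which follows from $1+V\le e^V$) closes cleanly. You are also right to flag the implicit sign condition $W\ge 0$: it is needed to factor $W$ out without reversing the inequality, and it is indeed forced by $0<\phi_0\le W$.

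The paper takes a slightly different route. Rather than doing strong induction directly on the target bound $We^{kV}$, it introduces the auxiliary sequence $\psi_k:=W+V\sum_{i=0}^{k-1}\phi_i$ (so $\phi_k\le\psi_k$ by hypothesis), observes that $\psi_k-\psi_{k-1}=V\phi_{k-1}\le V\psi_{k-1}$, hence $\psi_k\le(1+V)\psi_{k-1}$, and iterates to get $\phi_k\le\psi_k\le(1+V)^kW$; only at the very end does it invoke $1+V\le e^V$ to replace $(1+V)^k$ by $e^{kV}$. So the same elementary inequality is the engine in both arguments, but the paper isolates it as a single final step applied to the exact solution of the majorizing recurrence, whereas you fold it into the induction via the telescoping sum. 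Your "alternative route" at the end — comparing $\phi_k$ with the explicit solution $a_k=W(1+V)^k$ — is essentially the paper's argument in disguise. The paper's version is a touch cleaner in that working with $\psi_k$ sidesteps the need to factor out $W$ (and hence the $W\ge 0$ discussion) and delivers the sharper intermediate bound $\phi_k\le W(1+V)^k$ for free; your telescoping version is arguably more direct and avoids the auxiliary sequence. Both are entirely valid.
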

\begin{proof}
  To prove the result we define the sequence
  $ \psi_k:=W+V\sum_{i=0}^{k-1}\phi_i $ for $k\ge 1$ and $ \psi_0:=W$.
  We compute de difference between two consecutive elements of that sequence and then we write it down in terms  of  the sequence
  $\phi_k$:
  \begin{equation*}
    \psi_k-\psi_{k-1} = V\phi_{k-1} \le V\left(W+V\sum_{i=0}^{k-2}\phi_i\right)=V\psi_{k-1}, 
  \end{equation*}
  which leads to
  $$
  \phi_{k-1} \le \psi_{k-1}, 
  $$
  and
  \begin{equation*}
    \psi_k \le (1+V)\psi_{k-1} \implies \ \psi_k \le\left(1+V\right)^k\psi_0=\left(1+V\right)^kW.
  \end{equation*}
  Then we have the result, since $ \left(1+V\right)\le e^V $:
  \begin{equation*}
    \phi_k\le\left(1+V\right)^kW<We^{kV}.
    \qedhere
  \end{equation*}
\end{proof}

\begin{proof}[Proof of Theorem \ref{thm:convergence}]
  The proof is based on the study of the nonlinear system \eqref{eq:edp-delay}
  in time intervals of size $d$, in which the system becomes linear. To do that
  we start with the nonlinear system \eqref{eq:system-k}
  for $t\in (0,d)$ 
  and $k=1,2,\ldots$,
  and
  define
  \begin{equation*}
    u_k(v,t) := p_k(v,t) - p_{k,\infty}(v),
  \end{equation*}
  the differences to the pseudo-equilibria $p_{k,\infty}(v)$. With
  this notation we rewrite the nonlinear system \eqref{eq:system-k},
  splitting the equation in a linear part plus a nonlinear part:
  \begin{equation*}
    \p_t u_k(v,t) = L_{k-1} u_k(v,t) + b (N_{k-1, \infty} - N_{k-1}) \p_v p_{k}(v,t),
  \end{equation*}
  with the same Dirichlet boundary condition as before and defining
  the linear operator $L_{k-1}$, associated to the firing rate
  $N_{k-1, \infty}$, acting on $ u=u(v) $, by
  $$L_{k-1}u:=\p_v (v-bN_{k-1,\infty}u)+\p_v^2u+\delta (v-V_R)N_u,$$
  where $N_u:=-\p_vu(V_F)$ emphasises that $ N_u $ is the firing rate
  associated to $ u $. By Duhamel's formula we get
  \begin{equation*}
    u_k(v,t) = e^{t L_{k-1}} u_{k}(v,0)
    +
    b \int_0^t (N_{k-1, \infty} - N_{k-1}(s)) e^{(t-s) L_{k-1}} \p_v p_{k}(v,s) \d s.
  \end{equation*}
  Taking the $X$ norm we note that
  $|N_{k-1, \infty} - N_{k-1}(t)|\leq \| u_{k}(.,t) \|_X$, and using the spectral
  gap of $L_{k-1}$ in $X$:
  \begin{equation*}
    \| e^{t L_{k-1}} u_0  \|_{X}
    \leq C e^{-\lambda t} \| u_0  \|_{X}, \quad 1\le C,
  \end{equation*}
  we have, for all $ t\ge0 $:
  \begin{multline*}
    \|u_k(.,t)\|_X
    \leq
    C e^{-\lambda t} \| u_{k}(.,0) \|_X
    +
    |b| \int_0^t |N_{k-1, \infty} - N_{k-1}(s)|\, \| e^{(t-s) L_{k-1}} \p_v
    p_{k}(.,s) \|_X \d s
    \\
    \leq
    C e^{-\lambda t} \| u_{k}(.,0) \|_X
    +
    |b| \int_0^t \|u_{k-1}(.,s)\|_X
    \, \| e^{(t-s) L_{k-1}} \p_v p_{k}(.,s) \|_X \d s \\
    \leq
    C e^{-\lambda t} \| u_{k}(.,0) \|_X
    +
    \tilde{C}|b|\int_0^t e^{-\lambda(t-s)} (t-s)^{-3/4}\|u_{k-1}(.,s)\|_X
    \| \p_v p_k(.,s) \|_{L^2(\varphi)}d s,
  \end{multline*}
  where in the last inequality we used third hypothesis of the theorem, written in this particular case as
  $$\Vert e^{(t-s)L_{k-1}}\p_v
  p_{k}(.,s)\Vert_X\le \tilde{C}
  (t-s)^{-3/4}e^{-\lambda (t-s)}\| \p_v p_k(.,s) \|_{L^2(\varphi)}.
  $$
  \begin{rem}
    We are considering the same value of $ \lambda $ for all the
    spectral gaps of the operators $ L_k $ with $ k=1,2,... $, because
    these values come from the Poincare's like inequality used to
    prove the spectral gap of the linear equation in the space
    $L^2_{(p_\infty^{-1})}$ (see
    \cite[Appendix]{caceres2011analysis}).  These values depend only
    on the tails of the pseudo-equilibria $p_{k,\infty}$ and,
    considering that they convergence to $ p_\infty $ (see Theorem
    \ref{thm:delta_convergence})), we may take a value $\lambda$ valid
    for all $k$.
  \end{rem}
  After this we can bound the $L^2(\varphi)$ norm  of the derivative
  of $ p_k $ as
  $$
  \| \p_v p_k(.,t) \|_{L^2(\varphi)} \leq \| \p_v u_k(.,t) \|_{L^2(\varphi)} +
  \| \p_v p_{k,\infty}(.) \|_{L^2(\varphi)}
  \leq \| u_k(.,t) \|_X  + \bar{C},
  $$
  with $ \bar{C}>0 $, and, using \eqref{eq:p-bounded},
  and denoting the new constant again by $K$, we have
  $ \|u_{k}(.,s)\|_X\le K<\infty \;\forall s \in
  \left[0,t\right]\subseteq [0,d]$.
  Thus, with the constant out of the integral
  and renaming it as $ C_b:=\tilde{C}\vert b\vert\left(K + \bar{C}\right) $,
  we get
  \begin{equation}\label{eq:inequality-norm-1}
    \|u_k(.,t)\|_X
    \leq
    C e^{-\lambda t} \| u_{k}(.,0) \|_X
    \\+
    C_b \int_0^t e^{-\lambda(t-s)}(t-s)^{-3/4} \|u_{k-1}(.,s)\|_X \d s.
  \end{equation}
  Taking into account that
  $ p_{k}(v,0)=p_{k-1}(v,d) $,
 $ u_{k-1}(v,d):= p_{k-1}(v,d)-p_{k-1,\infty}(v)$,
  and
 $ u_k(v,0):=p_k(v,0)-p_{k,\infty}(v) $,
  we can write $u_k(v,0)=u_{k-1}(v,d)+\left(p_{k-1,\infty}(v)-p_{k,\infty}(v)\right) $,
  which, taking the $ X $ norm in $ v $, leads to
  $$
  \|u_k(.,0)\|_X\leq\|u_{k-1}(.,d)\|_X+\delta_k
  $$
  with $ \delta_k:=\|p_{k-1,\infty}-p_{k,\infty}\|_X $.
  By using the definitions $ f_k(t):=e^{\lambda t}\|u_k(.,t)\|_X $ and $ \epsilon(t):=e^{-\lambda t} $, we rewrite
  the previous inequality as $ f_k(0)\leq e^{-\lambda d}f_{k-1}(d) + \delta_k$, and
  \eqref{eq:inequality-norm-1} as
  \begin{equation}\label{eq:decay-inequality-1pre}
    f_k(t) \leq C \epsilon(d)f_{k-1}(d)+
    C \delta_k +
    C_b\int_{0}^{t}f_{k-1}(s)(t-s)^{-3/4}\d s,
  \end{equation}
  which can be rewritten, in terms of the linear operators $$ Af_k(t):=C f_k(d),\;  Bf_k:=C_b\int_{0}^{t}f_{k}(s)(t-s)^{-3/4}\d s 
  \quad\text{and}\quad   h f_{k-1}:=\epsilon(d) Af_{k-1}, $$ in the following way:
  \begin{equation}\label{eq:decay-inequality-1}
    f_k\leq \left(\epsilon(d) A+B\right)f_{k-1}+C \delta_k=
    h_{k-1}+Bf_{k-1}+C \delta_k,
  \end{equation}
  To prove 
  the decay of $\|u_k(.,t)\|_X$ we shall proceed in two steps:
  first we study the solution to $ f_k\leq   h_{k-1}+Bf_{k-1}$  and prove 
  its convergence.   Secondly we extend the converge to the complete
  sequence $ f_k $, using that $ \delta_k\rightarrow0 $ (see
  Theorem \ref{thm:delta_convergence} and previous comments before the proof).
  
  \
  
  \noindent
  {\it First step: Study of the recurrence $ f_k\leq   h_{k-1}+Bf_{k-1}$.}
  \newline
  \noindent
  We note that if $f_k$ satisfies the inequality
  $f_k\leq   h_{k-1}+Bf_{k-1}$, then $f_k\le x_k$, where $x_k$ is the solution
  to the recursive equation $ x_k= h_{k-1}+Bx_{k-1}$ with initial
  condition $x_k=f_0$. Then,  using the Lemma
  \ref{lem:variation-of-constants} to $x_k$ we obtain
  \begin{equation*}
    f_k\le B^kf_0+\sum_{i=0}^{k-1}B^{k-i-1}h_i.
  \end{equation*}
  Therefore we need to estimate $B^kf_0$ for $k=1,2,\ldots$. After some computations we obtain
  $$
  B^k(f_0)\le \Vert f_0 \Vert_\infty t^{\frac{k}{4}}\frac{\left(C_b\Gamma \left(\frac{1}{4}\right)\right)^k}{\Gamma(1+\frac{k}{4})},
  \quad 0<t<d,
  $$
  by using
  $$
  \int_0^t s^n(t-s)^{-3/4} \d s=t^{n+\frac{1}{4}}\beta (n+1,\frac{1}{4}), \quad n=1,2,\ldots
  $$
  and properties of the Gamma, $\Gamma$, and Beta, $\beta$, functions,
  as
  $$
  B^k(f_0)\le (C_b)^k\Vert f_0 \Vert_\infty t^{\frac{k}{4}}\prod_{i=0}^{k-1}\beta\left(1+\frac{k}{4},\frac{1}{4}\right)=
  (C_b)^k\Vert f_0 \Vert_\infty t^{\frac{k}{4}}\prod_{i=0}^{k-1}\frac{\Gamma(1+\frac{k}{4})\Gamma(\frac{1}{4})}
  {\Gamma(1+\frac{k+1}{4})}.
  $$
Then we use the $\Vert . \Vert_\infty$ norm in $ t\in[0,d] $ so that the following inequality holds:
  \begin{equation*}
    \|f_k\|_\infty\leq\frac{\left(d^{1/4}C_b\Gamma \left(\frac{1}{4}\right)\right)^k}{\Gamma(1+\frac{k}{4})}\|f_0\|_\infty+\sum_{i=0}^{k-1}\frac{\left(d^{1/4}C_b\Gamma \left(\frac{1}{4}\right)\right)^{k-i-1}}{\Gamma\left(1+\frac{k-i-1}{4}\right)}\|h_i\|_\infty,
  \end{equation*}
or, equivalently
    \begin{equation}\label{eq:f_bound_old}
	\|f_k\|_\infty\leq\frac{\left(d^{1/4}C_b\Gamma \left(\frac{1}{4}\right)\right)^k}{\Gamma(1+\frac{k}{4})}\|f_0\|_\infty+C\epsilon(d)\sum_{i=0}^{k-1}\frac{\left(d^{1/4}C_b\Gamma \left(\frac{1}{4}\right)\right)^{k-i-1}}{\Gamma\left(1+\frac{k-i-1}{4}\right)}\|f_i\|_\infty.
	\end{equation}
	Bearing in mind that $ C>1 $, we consider
        $
        C\frac{\left(d^{1/4}C_b\Gamma(1/4)\right)^k}{\Gamma(1+\frac{k}{4})}\le
        \eta_{b,d}e^{-k} $ for an appropriate $ \eta_{b,d} $, which
        can be computed by finding the maximum of the function
	$$ g(k):=\frac{\left(d^{1/4}C_be\Gamma(1/4)\right)^k}{\Gamma(1+\frac{k}{4})}=\frac{M^k}{\Gamma(1+\frac{k}{4})} \quad \text{with} \quad M:=d^{1/4}C_be\Gamma(1/4).$$ 
	We take the logarithm of $ g(k) $ and then we find a quantity that bounds the maximum of function $ g(k) $ by approximating the gamma function using the Stirling's formula
	$$ \Gamma\left(1+\frac{k}{4}\right)\ge\left(\frac{k}{4}\right)^{\frac{k}{4}}e^{-\frac{k}{4}}, $$
	such that $$ \log g(k)\le k\log M + \frac{k}{4}-\frac{k}{4}\log \frac{k}{4}=:\hat{g}(k).$$ Studying the first derivative of function $ \hat{g}(k) $ we compute the maximum, given by $ \hat{g}(4M^4) $ and then bounding the function $ g(k) $ as $g(k) \le  e^{M^4}$. This procedure allows us to define the quantity $ \eta_{b,d} $ as
	\begin{equation*}
	\eta_{b,d}:=Ce^{M^4}=Ce^{d\left(|b|\tilde{C}(\bar{C}+K)e\Gamma(1/4)\right)^4}=Ce^{d|b|^4\hat{C}},
	\end{equation*}
	having unified all constants in the exponential as $ \hat{C}:=\tilde{C}(\bar{C}+K)e\Gamma(1/4) $.
	
	\
	
	Now we can rewrite expression \eqref{eq:f_bound_old} through the following inequality:
	\begin{equation*}
		\|f_k\|_\infty\leq \eta_{b,d}e^{-k}\|f_0\|_\infty+ \eta_{b,d}\epsilon(d)\sum_{i=0}^{k-1}e^{-(k-i-1)}\|f_i\|_\infty.
	\end{equation*}
  Finally, if we define $ \phi_k:=e^k\|f_k\|_\infty $, $ W:=\eta_{b,d}\|f_0\|_\infty $ and $ V:=\eta_{b,d}\epsilon(d) e $, we can write the previous inequality as
  \begin{equation}\label{eq:inequality-before-gronwall}
    \phi_k \leq W+V\sum_{i=0}^{k-1}\phi_i.
  \end{equation}
  Now we use the discrete Gronwall's Lemma \ref{lem:gronwall} to turn the equation \eqref{eq:inequality-before-gronwall} into the following
  $\phi_k \leq We^{kV}$,  which leads to
  \begin{equation}\label{eq:inequality-after-gronwall}
    \|f_k\|_\infty \leq We^{k\left(V-1\right)}.
  \end{equation}
  Equation \eqref{eq:inequality-after-gronwall} implies that $ V $ must be less than $ 1 $ in order to obtain convergence to $ 0 $ of the sequence $ \|u_k(t)\|_X $, so that the condition over the delay is the following:
  \begin{equation*}
    d>\frac{1+log\left(\eta_{b,d}\right)}{\lambda}=\frac{1 + \log C +\hat{C}db^4 }{\lambda},
  \end{equation*}
or, equivalently
\begin{equation}\label{eq:requirement-delay}
	d > \frac{1+\log C}{\lambda - \hat{C}b^4}
\end{equation}
This inequality requires a smallness condition on $b$, since
$\lambda - \hat{C}b^4$ needs to be positive. That is: we can take $d$
satisfying \eqref{eq:requirement-delay}, only if
$|b|^4<\lambda/\hat{C}$.
  
  \
  
  \noindent
  {\it Second step: Study of  $f_k\leq h_{k-1}+Bf_{k-1}+C \delta_k$.}
  \newline
  \noindent
  We write inequality \eqref{eq:decay-inequality-1}  in terms of linear
  operator $\mathcal{M}:=\left(\epsilon(d) A +B\right) $
  as
  \begin{equation*}
    f_k\le \mathcal{M}f_{k-1}+C\delta_k.
  \end{equation*}
  Using the Lemma \ref{lem:variation-of-constants} as before, we get to
  \begin{equation}\label{eq:before_gronwall_2}
    f_k\le \mathcal{M}^kf_0+\sum_{i=0}^{k}\mathcal{M}^{k-i}C\delta_i.
  \end{equation}
  We already know from equation \eqref{eq:inequality-after-gronwall} that
  $ \|\mathcal{M}^kf_0\|_\infty\le C\|f_0\|_\infty e^{k(V-1)} $, so that we can take norm infinity in equation \eqref{eq:before_gronwall_2} and then write it as
  \begin{equation}
    \|f_k\|_\infty\le Ce^{k(V-1)}\|f_0\|_\infty +C^2\sum_{i=0}^{k}e^{(k-i)(V-1)}\|\delta_i\|_\infty.
  \end{equation}
  Then, the condition to obtain convergence to $ 0 $ of $ f_k $ as
  $ k\to\infty $ is given by two different requirements. First, as
  before, condition \eqref{eq:requirement-delay} should be
  satisfied. Secondly, $ \|\delta_k\|_\infty $ has to converge to
  $0$ as $ k\to\infty $, as proven in Theorem
  \ref{thm:delta_convergence}.
\end{proof}

\begin{rem}
  Convergence to the equilibrium was also proven using the entropy
  method in \cite[Theorem 5.3]{caceres2019global} in weakly connected
  networks. In particular, the required smallness of the connectivity
  parameter was $8b^2e^{\lambda d}\le\epsilon/N_\infty$, for
  $\epsilon$ a small constant, depending on $N_\infty$ and constants
  of Sobolev injection of $H^1(I)$ in $L^\infty(I)$ ($I$ is a small
  neighbourhood of $V_R$).  Our bound is $ b^4<\lambda/\hat{C}
  $. With this strategy we obtain the convergence for any large $d$ if
  the smallness of $b$ is satisfied. This could also be compared to a
  similar result in \cite[Theorem 4.1]{local}.
\end{rem}

\section{Numerical results: Global perspective on the long-time
  behaviour of the delayed NNLIF model}
\label{sec:numerical}

In this section we illustrate numerically the relationship between the
discrete pseudo equilibrium model \eqref{eq:sequence-p} and the highly
delayed NNLIF model \eqref{eq:edp-delay}. We give numerical evidence
that the long-time behaviours of the two models are closely
related. In particular, in the long run, we can predict the behaviour
of the nonlinear system by knowing the behaviour of the discrete
system, which was studied in Section
\ref{sec:pseudo-equilibria-sequence}.

The numerical approximation of equation \eqref{eq:edp-delay} has been
carried out by means of a fifth order finite difference flux-splitting
WENO scheme \cite{cockburn1998essentially} for the advection term, a
standard second order finite differences for the diffusion term, and
an explicit third order TVD Runge-Kutta method for the time
evolution. This scheme has been used previously to simulate NNLIF
models \cite{caceres2018analysis} and a detailed explanation of the
scheme can be found in \cite{caceres2011numerical}. Other numerical
schemes, such as those based on a Scharfetter-Gummel reformulation,
have also been used to carry out numerical simulations of this model
\cite{hu2021structure, ikeda2022theoretical}.

\

Our discretisation is composed of a mesh in voltage with
$ v_i=v_{min} + i\Delta v, i=0,1,2,..., n_v$ with a suitable minimum
value $v_{min}$, which ensures the mass is approximately $0$ to the
left of $v_{min}$; and a threshold value $V_F$ as the maximum mesh
value $v_{n_v}$. The mesh in time is given by
$ t_j=j\Delta t,\; j=0,1,2,....,n_t$ where the value of $ \Delta t $
is chosen so that it complies with the CFL condition imposed for a
correct approximation of the drift and diffusion terms
\begin{equation*}
	\Delta t < \min\left(\frac{a(\Delta v)^2}{2},
	\frac{C_{CFL}\Delta v}{\max|bN(t-d)-v|}\right).
\end{equation*}
The reset value $ V_R $ is
one of the nodes of the mesh in voltage, 
and 
the delta function 
in the right term of \eqref{eq:edp-delay}
is approximated by a very
sharp Maxwellian centered in $ V_R $, of the form:
\begin{equation}\label{eq:maxwellian}
	m(x)=\frac{1}{\sqrt{2\pi}\sigma}e^{-\frac{(x-V_R)^2}{2\sigma^2}},
\end{equation}
with $ \sigma = 10^{-6} $, which is normalized by integrating it in our mesh and setting the integral to $ 1 $. The boundary conditions are impose at every time step by setting $ p(v_0) = 0 $ and $ p(v_{n_v})=0 $. Furthermore, we ensure that the values of the probability distribution near $v_0$ are numerically 0, so that there are no issues arising from forcing the boundary condition with the size of the mesh.

The initial conditions we have considered for
these simulations are approximations of:
\begin{itemize}
\item The pseudo-equilibria profiles (see \eqref{eq:sequence-p})
  
  \begin{equation}\label{eq:profiles}
    p(v)=\overline{N}e^{-\frac{(v,bN)^2}{2}}\int_{\max(v,V_R)}^{V_F}e^{\frac{(w-bN)^2}{2}}\d
    w,
  \end{equation}
  fulfilling the condition $ \int_{-\infty}^{V_F}p(v) \d v = 1 $,
  with the particular cases
  \begin{equation}\label{eq:profile-}
		p^-(v) = N^-e^{-\frac{\left(v-bN^+ \right)^2}{2}}
		\int_{\max(v, V_R)}^{V_F}e^{\frac{\left(w-bN^+
                    \right)^2}{2}}\d w,
		\quad N^-=\frac{1}{I(N^+)}
	\end{equation}
	and
	\begin{equation}\label{eq:profile+}
		p^+(v) = N^+e^{-\frac{\left(v-bN^- \right)^2}{2}}
		\int_{\max(v, V_R)}^{V_F}e^{\frac{\left(w-bN^-
                    \right)^2}{2}}\d w,
		\quad N^+=\frac{1}{I(N^-)}, 
	\end{equation}
  the 2-cycle of
  pseudo-equilibria sequence, given by the 2-cycle $\{N^-,N^+\}$ of
  the firing rate sequence.
\item Double Maxwellians:
  \begin{equation}
    \frac{1}{\sqrt{8\pi}\sigma}(e^{\frac{-\left(v-\mu\right)^2}{2\sigma^2}} + e^{\frac{-\left(v + \mu + 2\right)^2}{2\sigma^2}})
    \quad \mu\in \R,  \sigma>0
    .
    \label{eq:2maxw}
  \end{equation}
\end{itemize}
Three system parameters are fixed: $a=1$, $V_R = 1$ and $V_F = 2$.
The connectivity parameter $b$ and the delay $d$ will change depending
on the simulation, displaying different phenomena for this model.

\

We analyse in detail three different aspects: bi-stability between
the lower equilibrium and the plateau distribution for excitatory
networks with two equilibria;  the emergence of periodic solutions
for highly inhibitory systems; and  the influence of the
delay value on the evolution in time of the nonlinear system, in
relation with the behaviour of the pseudo-equilibria sequence.
The rest of the system behaviour is well represented by the above study, as we shall explain below.

\subsubsection*{The case with two equilibria: Bi-stability between the
  lower equilibrium and the plateau distribution}

We consider in this case the regime in which the implicit equation
$NI(N)=1$ has two solutions ($ N_1^*<N_2^* $), which determine the
stationary firing rates of the nonlinear system
\eqref{eq:edp-delay}. This regime corresponds to values of the
connectivity parameter $b$ between $V_F-V_R=1 $ and approximately
$2.3$. 
Specifically, we take $ b=1.5 $ for which $ N_1^*\approx0.194 $ and
$N_2^*\approx2.294$, but we emphasise that other values of $b$ in that
range appear to show an equivalent behaviour. For this connectivity
value, Figure \ref{fig:b_1.5_sequence}
shows the firing rate sequence $N_{k,\infty}$ (see \eqref{eq:sequence-N})
for different initial conditions $N_{0,\infty}$. The behaviour of the sequence was analysed 
in Theorem \ref{th:relations-Nb-positive}. Thus, we know that in that
case (with two equilibria) the firing rate sequence tends to $N_1^*$
or diverges, as we can see in Figure \ref{fig:b_1.5_sequence}.

\

In Figure \ref{fig:b-positive-N} we show that the behaviour of the
discrete system is reproduced for the nonlinear system. It is
determined by the value of the initial firing rate
$$N_0:=-\p_vp_0(V_F),$$ depending on the position with respect to
$N_1^*$ and $N_2^*$.  The threshold value between both regimes is
$ N_2^*\approx2.294 $: for a lower value of $ N_0 $ the nonlinear
system is seen to converge to $ N_1^* $, while for $N_0>N_2^*$, the
firing rate seems to diverge.
\vspace*{1cm}
\begin{figure}[H]
	\begin{center}
		\begin{minipage}[c]{0.49\linewidth}
			\includegraphics[width=\textwidth]{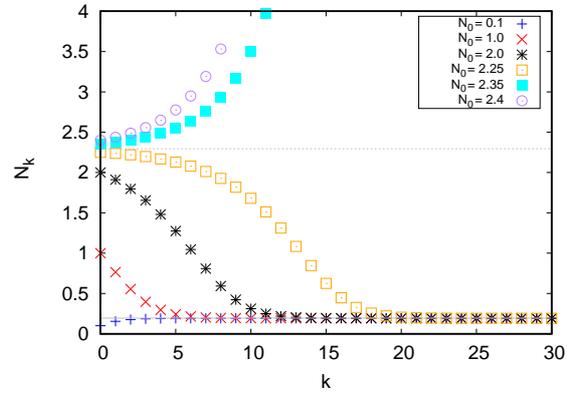}
		\end{minipage}
	\end{center}
	\caption{{\bf Firing rate sequence  $\boldsymbol{N_{k,\infty}}$
            \eqref{eq:sequence-N} with
            $\boldsymbol{b=1.5}$ and different values of
            initial condition $\boldsymbol{N_{0,\infty}}$.} Solid and
          dashed straight horizontal lines represent equilibria $ N_1^* $ and $ N_2^* $ respectively.
	}
	\label{fig:b_1.5_sequence}
\end{figure}
\newpage
In Figure \ref{fig:b-positive-N} we consider two different initial
conditions of the type \eqref{eq:profiles}, with the respective values
of $ N_0 = 2.233348 $ and $N_0 = 2.365824$, which serve as
representatives of two different regimes for the delayed nonlinear
system: convergence to the low steady state or formation of the
plateau distribution, when considering transmission delay $ d=10 $. This
illustrates bi-stability between the low steady state and the plateau
distribution, depending only on the initial condition (see \cite{CR-L}
for a better understanding of the plateau distribution). As we also
mention in \cite{CR-L}, no matter what the size of the delay is, we
shall find that the nonlinear system seems to behave according to the
pseudo-equilibria sequence. This appears to indicate that the
long-term behavior of the system can be decided only on the basis of
the value of $N_0$.
In the upper graphs, 
we see the time evolution of the firing rates and on the
bottom, 
we see the shape of the voltage distributions at the end of the two
simulations ($t=220$). When the system starts with an initial
condition whose $N_0$ is less than $ N_2^* $, the firing rate
convergences to $ N_1^* $ (left plot).  However, if the system starts
with $N_2^*<N_0$, then $ N(t) $ increases in time (right plot). This
behaviour is consistent with that of the firing rate and
pseudo-equilibria sequences described in Theorem
\ref{th:relations-Nb-positive}; if $N_{0,\infty}=N_0$ is below the
high stationary firing rate $ N_2^* $ then
$ \left\{N_{k,\infty}\right\}_{k\ge0} $ converges to the lower one
$ N_1^* $, and, if $N_{0,\infty}=N_0$ is higher than $ N^*_2 $ then
$ \left\{N_{k,\infty}\right\}_{k\ge0} $ diverges.

\begin{figure}[H]
	\begin{center}
	\begin{minipage}[c]{0.49\linewidth}
		\includegraphics[width=\textwidth]{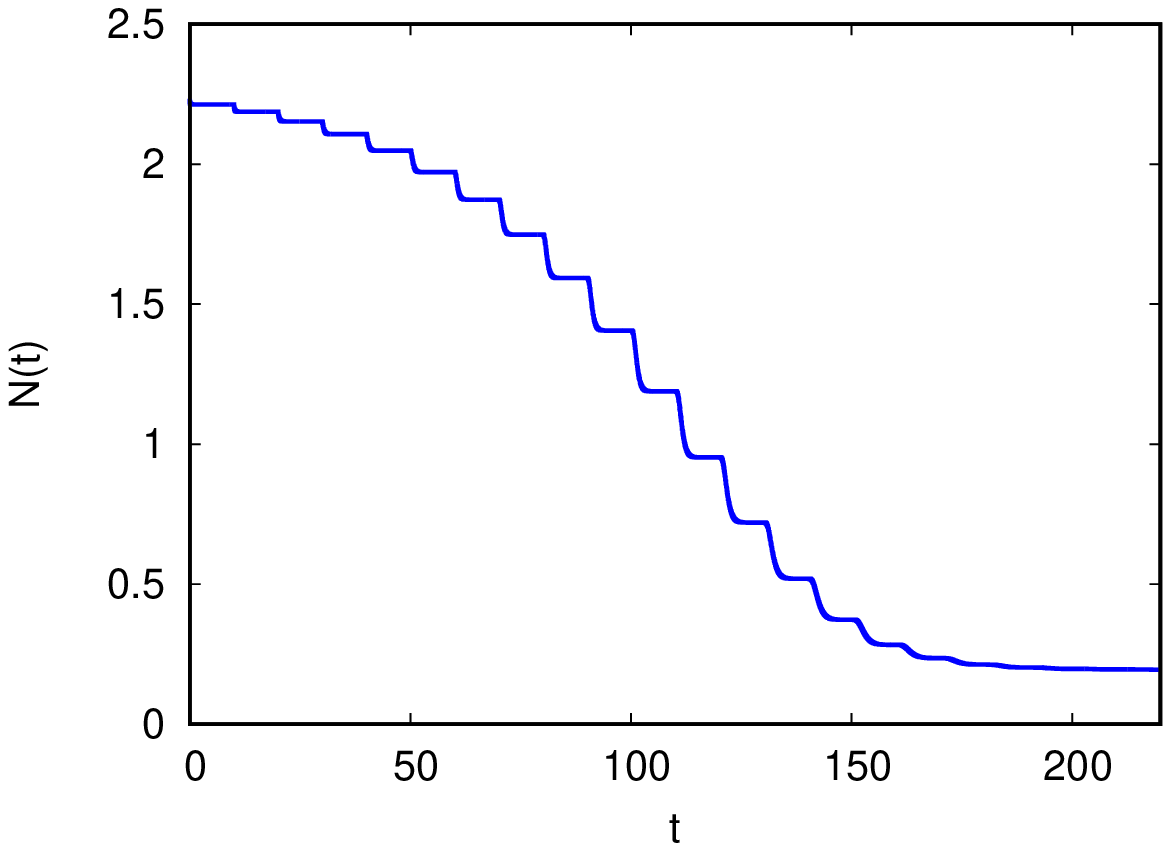}
	\end{minipage}    
	\begin{minipage}[c]{0.49\linewidth}
		\includegraphics[width=\textwidth]{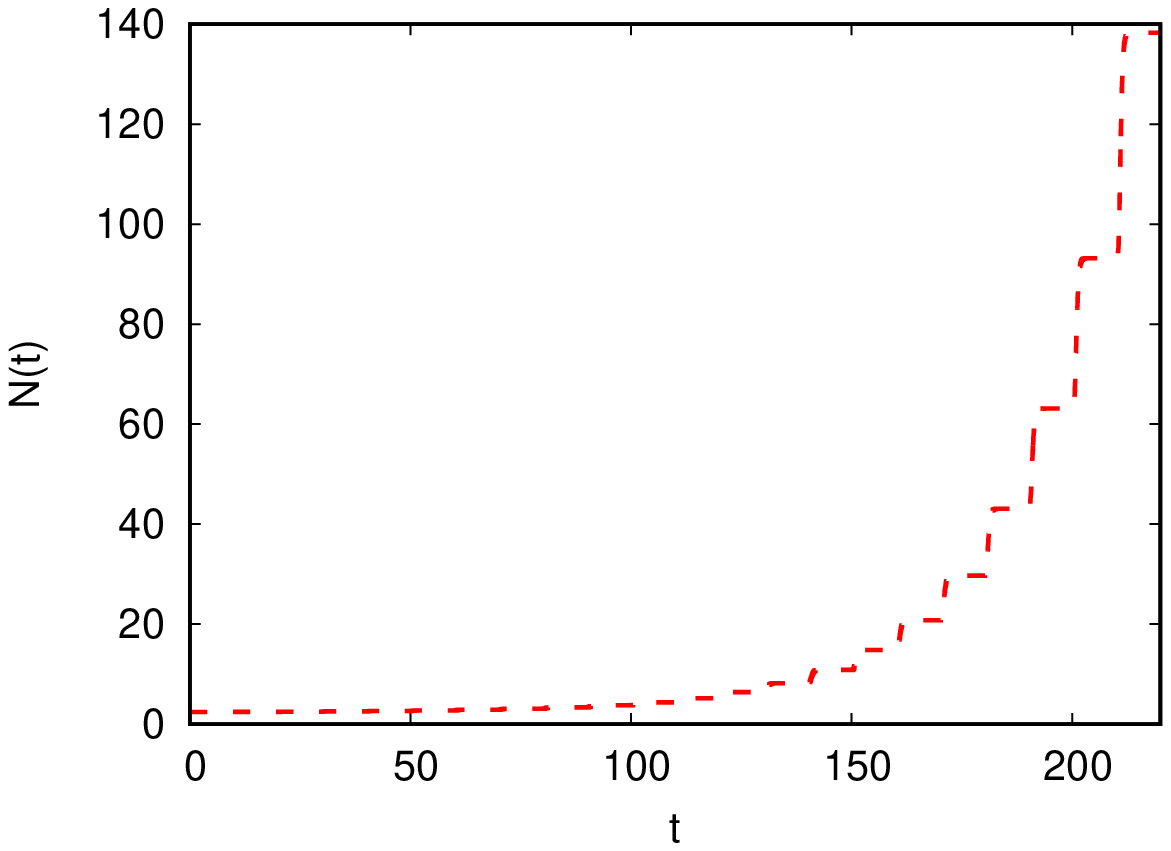}
	\end{minipage}
	\end{center}
	\begin{center}
		\begin{minipage}[c]{0.49\linewidth}
			\includegraphics[width=\textwidth]{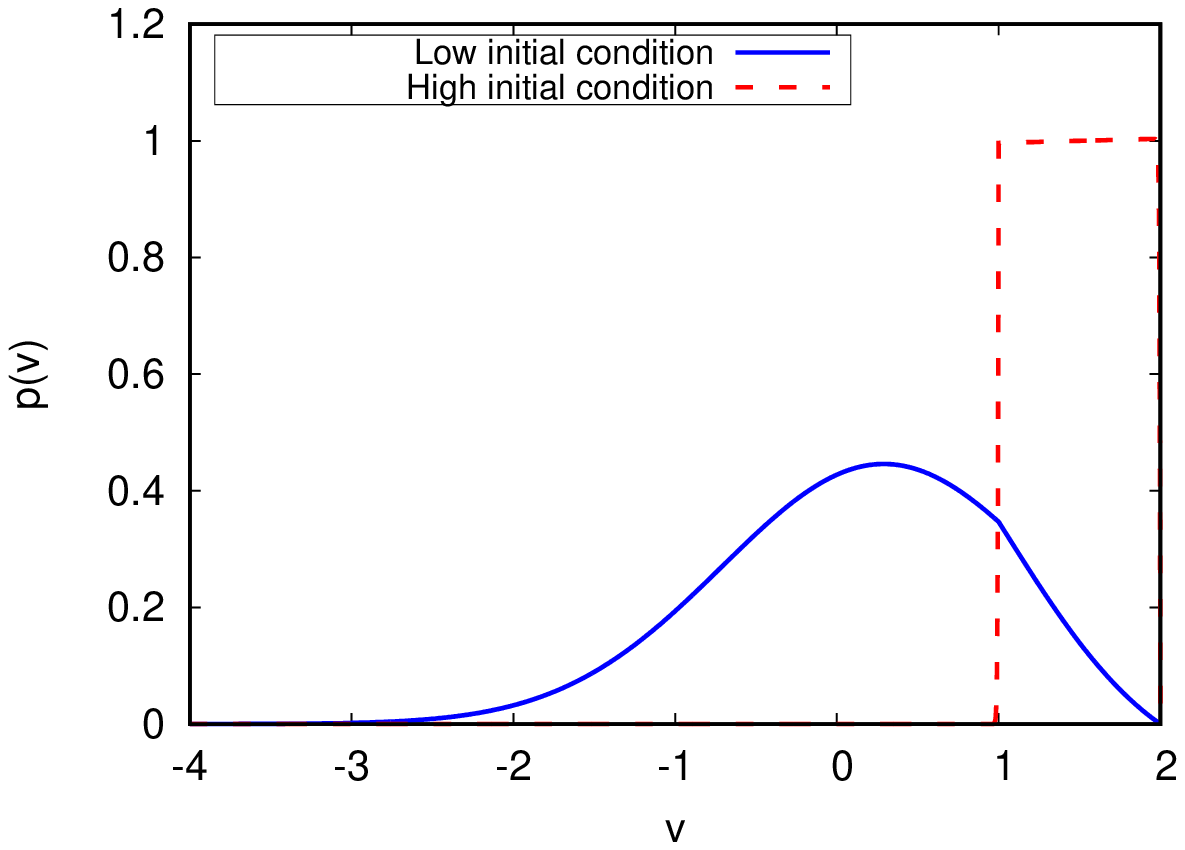}
		\end{minipage}
	\end{center}
	\caption{{\bf Nonlinear system \eqref{eq:edp-delay}
			with $\boldsymbol{b=1.5}$ and $\boldsymbol{d=10}$}.\newline
                      \emph{Top:} Evolution on time of the firing rate, $N(t)$, with initial conditions given by equation \eqref{eq:profiles} with $ N=2.25 $, $ \overline{N}=2.233348 $, chosen to be smaller than $ N_2^* $   (left)
                      and $ N=2.35 $, $ \overline{N}=2.365824 $, greater than $ N_2^* $ (right). 
\newline                      
                      \emph{Bottom:} Comparison of distributions $ p(v,t) $ at the end of the simulations ($ t=220 $).
	}
	\label{fig:b-positive-N}
\end{figure}

We have compared the discrete and the nonlinear systems in Figure
\ref{fig: b-positive-Nk-and-pk}. Here we have used the same
simulations shown in Figure \ref{fig:b-positive-N} for the nonlinear
system, and we have calculated the sequences $ \{N_{k,\infty}\}_{k} $
and $ \{p_{k,\infty}\}_k $ by starting with the same initial condition
as for the simulations, $ N_{0,\infty}=N_0=2.233348 $ in the left
plots and $ N_{0,\infty}=N_0=2.365824 $ in the right ones. In the top
plots we observe the comparison between $ N(t) $ and
$ \{N_{k,\infty}\}_{k} $ until $ t=200 $, while in the bottom plots we
show the comparison between $ p(v,t=20) $, $ p(v,t=120) $ and the
pseudo-equilibria $ p(v,t=200) $ with $ p_{2,\infty}(v) $,
$ p_{12,\infty}(v) $ and $ p_{20,\infty}(v) $ respectively. We can
observe that the delay $d=10$ is large enough so that the simulation
results, both $p(v,t)$ and $N(t)$, almost completely coincide with the
elements of the pseudo equilibrium and firing rate sequences starting
from the same initial condition.

\

In \cite{caceres2019global} a global existence theory was developed
for the nonlinear system \eqref{eq:edp-delay} with a transmission
delay $d>0$, by extending the results of \cite{carrillo2013classical}.
So the sequence of pseudo-equilibria suggests that the firing rate of
the nonlinear system should diverge and the theory tells us that this
cannot happen in finite time. The only possibility is then that $N(t)$
diverges in infinite time, giving rise to the plateau distribution as
$N(t)$ grows. This is precisely the behaviour seen in our simulations.

So that in cases where the firing rate sequence diverges, we expect
the following to be true: {\em Let us consider
	$0<b$, 
	an initial condition $p_0\in X$ 
	(and $N(t)=-\p_vp_0(V_F)$ for $t\in [-d,0]$) and $p$ its related
	solution to the nonlinear system \eqref{eq:edp-delay}.  Assuming
	that the firing rate sequence $\left\{N_{k,\infty}\right\}_{k\ge0}$,
	with initial value $N_{0,\infty}:=-\p_vp_0(V_F)$ (see
	\eqref{eq:sequence-N}) diverges, then the solution, $p$, to the
	nonlinear system \eqref{eq:edp-delay} with transmission delay $ d>0$
	and initial condition $p_0$ evolves to a plateau distribution, i.e,
	the membrane potential of the system tends to be uniformly
	distributed between $V_R$ and $V_F$.}

\

The numerical results in Figures \ref{fig:b_1.5_sequence} and
\ref{fig:b-positive-N} and numerical experiments in the literature
also illustrate the behaviour of the nonlinear system when the firing
rate sequence converges.  Theorems \ref{th:relations-Nb-positive} and
\ref{th:relations-Nb-negative} give conditions on the initial value of
the firing rate sequence to converge to an equilibrium of the system
(in both cases, excitatory and inhibitory).  In Figures
\ref{fig:b-positive-N} and \ref{fig: b-positive-Nk-and-pk} we see that
the nonlinear system tends to an equilibrium when the discrete system
also tends to equilibrium. In the following experiments below we shall
see it for the inhibitory case and in
\cite{CR-L,caceres2011analysis,caceres2018analysis,hu2021structure} it
can also be seen for the excitatory case with only one equilibrium.

So that in case where firing rate sequence converges,
we  expect the following to be true:
{\em	Let us  consider $ b\in\R $, 
	an initial condition $ p_0\in X $ 
	(and $N(t)=-\p_vp(V_F)$ for $t\in [-d,0]$) and  $p$ its related solution
	to the nonlinear system \eqref{eq:edp-delay}.
	Assuming that  
	the firing rate sequence $\left\{N_{k,\infty}\right\}_{k\ge0}$, with initial value
	$N_{0,\infty}:=-\p_vp_0(V_F)$
	(see \eqref{eq:sequence-N}) converges to $ N_\infty>0 $,
	then there exists $ d_0>0 $ large enough and $ Q,\mu>0 $, such that the solution,
	$p$,  to the nonlinear system \eqref{eq:edp-delay} 
	with transmission delay  $ d>d_0 $ and initial condition $ p_0 $ fulfills
	\begin{equation}
		\|p(.,t)-p_\infty(.)\|_X\le Qe^{-\mu t}\|p_0-p_\infty\|_X \quad \forall t\ge 0,
	\end{equation}
	where 
	$p_\infty$  is the stationary solution to the nonlinear system
	\eqref{eq:edp-delay} with firing rate $N_\infty$.
}

\begin{figure}[H]
	\begin{center}
		\begin{minipage}[c]{0.49\linewidth}
			\includegraphics[width=\textwidth]{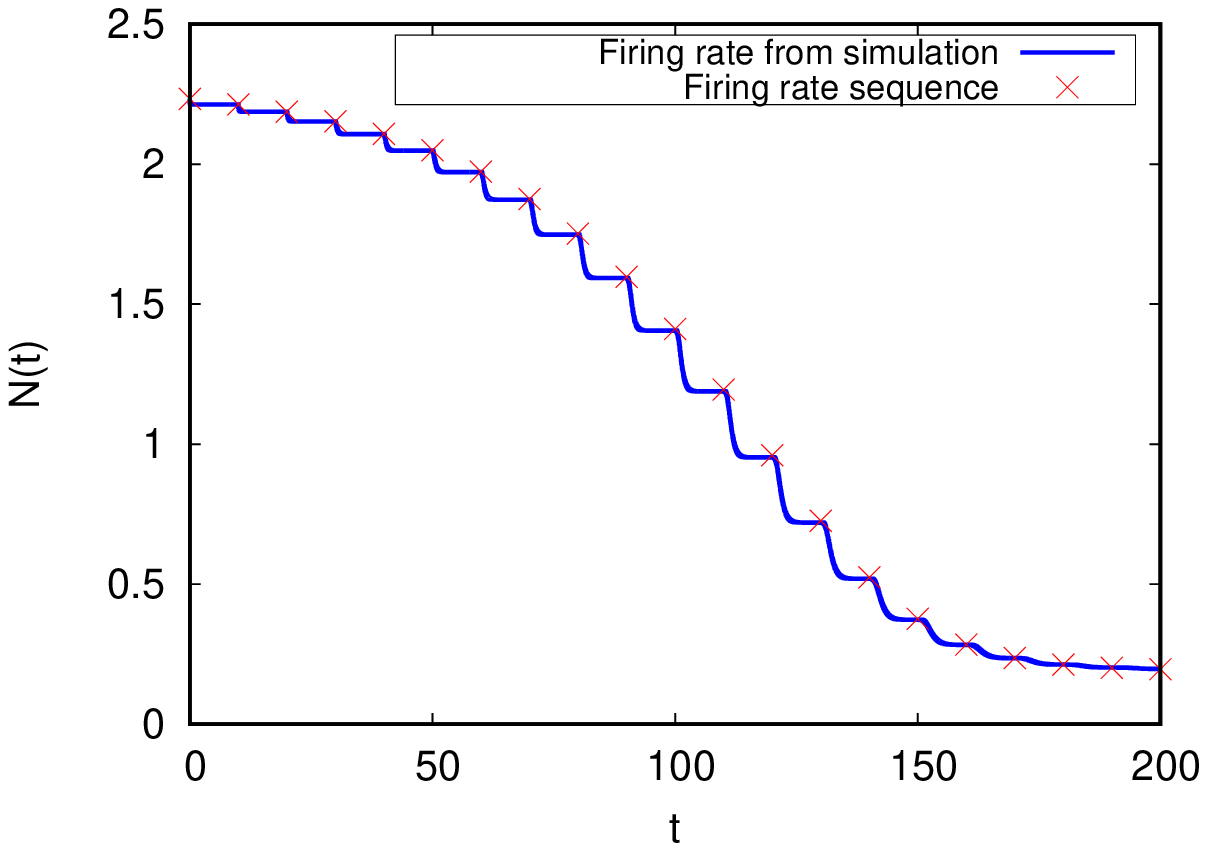}
		\end{minipage}    
		\begin{minipage}[c]{0.49\linewidth}
			\includegraphics[width=\textwidth]{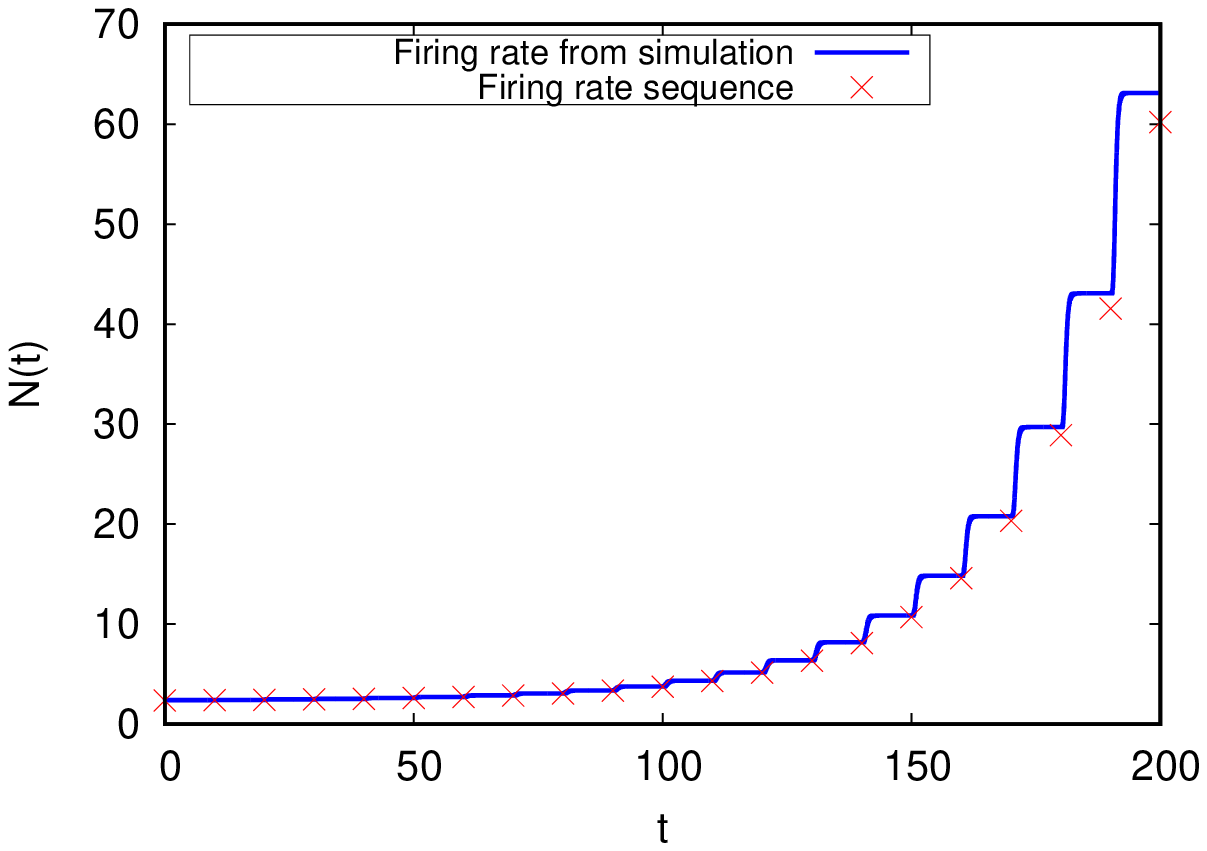}
		\end{minipage}
	\end{center}
	\begin{center}
		\begin{minipage}[c]{0.49\linewidth}
			\includegraphics[width=\textwidth]{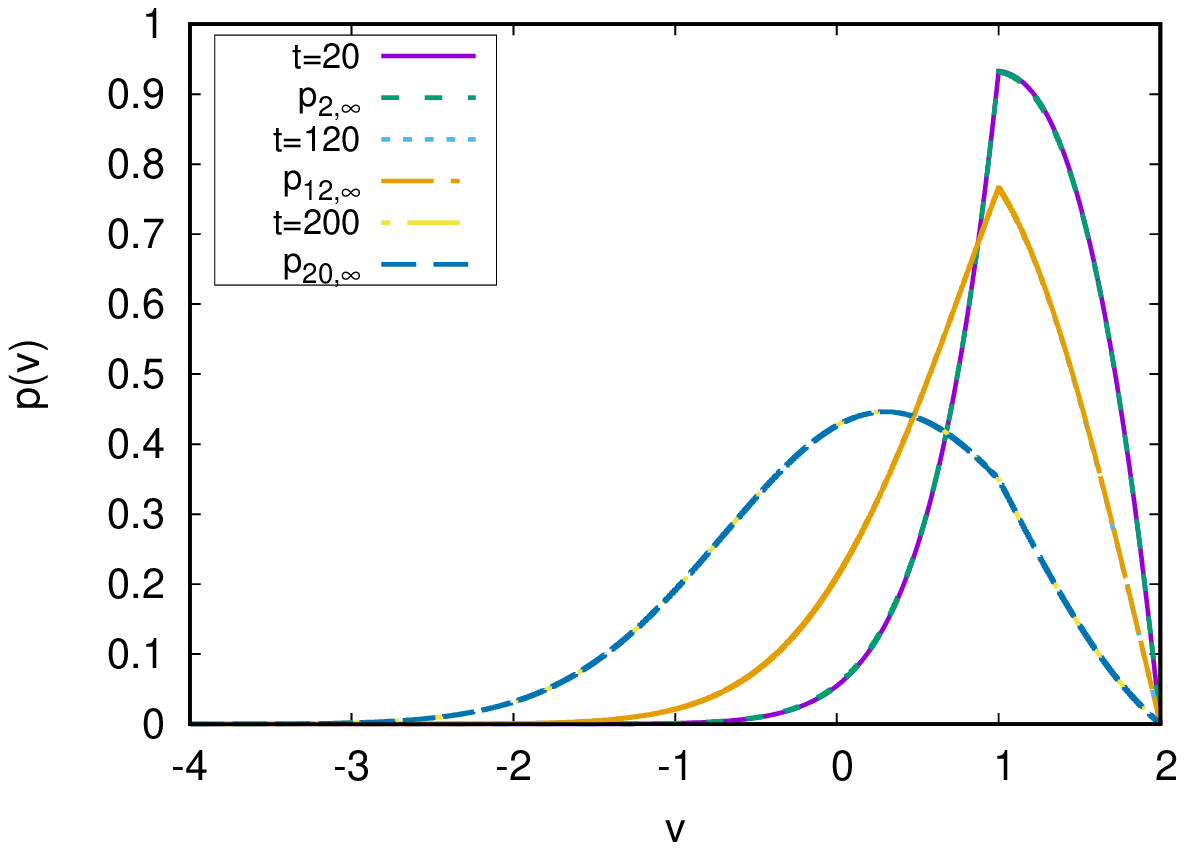}
		\end{minipage}
		\begin{minipage}[c]{0.49\linewidth}
			\includegraphics[width=\textwidth]{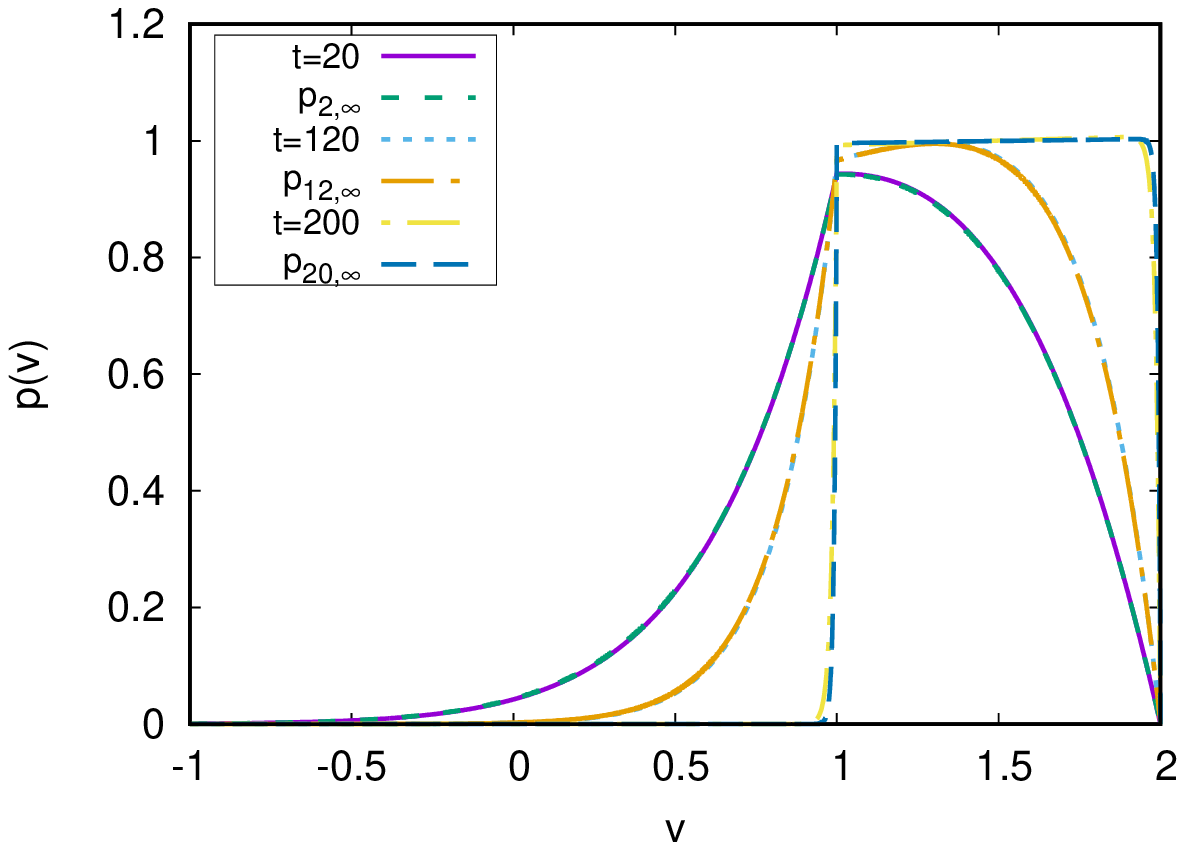}
		\end{minipage}
	\end{center}
	\caption{{\bf Comparison between the nonlinear system
            \eqref{eq:edp-delay} and the discrete system with
            $\boldsymbol{b=1.5}$ and $\boldsymbol{d=10}$.}
          \emph{Top:} Comparison between the firing rate of the
          approximated solution to the nonlinear equation, $ N(t) $
          and the sequence of firing rates $ N_{k,\infty}
          $. \emph{Bottom:} Solutions of the nonlinear problem,
          $ p(v,t) $ at different times, compared with the
          pseudo-equilibria $ p_{k,\infty}(v) $ which correspond to
          those times. \emph{Left:} initial condition given by
          equation \eqref{eq:profiles} with $ N=2.25 $,
          $ \overline{N}=2.233348 $, chosen to be smaller than
          $ N_2^* $. \emph{Right:} initial condition given by equation
          \eqref{eq:profiles} with $ N=2.35 $,
          $ \overline{N}=2.365824 $, greater than $ N_2^* $.  }
	\label{fig: b-positive-Nk-and-pk}
\end{figure}

\subsubsection*{Influence of the delay value on the excitatory nonlinear system behavior: cases with one equilibrium and without equilibria}

In this subsection we show results concerning the influence of the delay value on the behavior of the nonlinear system. As we said before, the nonlinear system must follow the behavior of the pseudo-equilibrium sequence when the delay is sufficiently large. Although deciding when the delay is large enough depends on the parameters of each simulation (especially depends on $b$ and the initial condition of the firing rate $N_0$). To evaluate the influence of the delay on the nonlinear system we will use two situations where at least numerically the behavior of the nonlinear system is well known. The first is the case with $b=0.5$ and therefore a single equilibrium, where we know that, if we consider any delay $d>0$ the system converges to its unique equilibrium. The second is the case $b=2.2$, where there are no equilibria and we know that any system with delay tends to a plateau distribution, i.e., its firing rate grows in time but does not diverge in finite time.

Therefore we will see how the use of different values for the delay $d$ does not change the fundamental long-term behavior of the system, but makes the system in each delay interval converges to the corresponding pseudo-equilibrium or not. Considering that there is only one possible long-term behavior for each case, we will consider a single initial condition in each case and different values for the delay.

First, in Figure \ref{fig:b_05_22_sequences} we observe the sequence of firing
rates for both values of $b$, which behave as mentioned before for the
nonlinear system with delay: for $b=0.5$, it converges to the unique
equilibrium regardless of the initial condition; for $b=2.2$, it increases
in time.
In Figure \ref{fig:b_05_22_nonlinear_system} we show the evolution on time of
the firing rate of the nonlinear system, $N(t)$, with
three different values of the delay.
In the left plot, considering $b=0.5$, we note the need for a large delay
(at least greater than $2$) for the nonlinear system to pass one by one through the pseudo-equilibria and even remain close to them for some time.
However we note from the right plot of Figure \ref{fig:b_05_22_nonlinear_system} the lower requirement of high delay values to observe the nonlinear
system stabilizing for a certain time in the pseudo-equilibria,
the necessary value being somewhere between 0.1 and 1.

If we consider these results together with those shown in the previous
subsection for $b=1.5$, we notice an influence of the value of $b$ on the
value of $d$ necessary for the system to pass through its associated
pseudo-equilibria. So that the smaller the value of $b$ is, the greater the
value of $d$ should be. This consideration is in accordance with the stated
in the proof of  Theorem \ref{thm:convergence}
(see \eqref{eq:requirement-delay}).

\begin{figure}[H]
	\begin{center}
		\begin{minipage}[c]{0.49\linewidth}
			\includegraphics[width=\textwidth]{b_05_sequence.eps}
		\end{minipage}    
		\begin{minipage}[c]{0.49\linewidth}
			\includegraphics[width=\textwidth]{b_22_sequence.eps}
		\end{minipage}
	\end{center}
	\caption{{\bf Firing rate sequence  $\boldsymbol{N_{k,\infty}}$
	    \eqref{eq:sequence-N} with 
            $\boldsymbol{b=0.5}$ and  $\boldsymbol{b=2.2}$.}\\
          \emph{Left:} $b=0.5$ and different values of
		initial condition $N_{0,\infty}$ below and above the unique equilibrium (gray dashed line). \emph{Right: } $b=2.2$. This is a case without equilibria.
	}
	\label{fig:b_05_22_sequences}
\end{figure}

\begin{figure}[H]
	\begin{center}
		\begin{minipage}[c]{0.49\linewidth}
			\includegraphics[width=\textwidth]{b05_high_N.eps}
		\end{minipage}    
		\begin{minipage}[c]{0.49\linewidth}
			\includegraphics[width=\textwidth]{b22_N.eps}
		\end{minipage}
	\end{center}
	\caption{{\bf  Time evolution of the firing rate, $\boldsymbol{N(t)}$,
            for the nonlinear system \eqref{eq:edp-delay} with different
            values of the delay and the connectivity parameter.}
          \newline {\em Left:} $b=0.5$ and different values of
		delay.  Initial conditions are given by equation \eqref{eq:profiles} with $ N=6 $. In this case there is unique equilibrium.
		\emph{Right:} $b=2.2$ and different value of the delay.
                Initial conditions are given by equation \eqref{eq:profiles}
                with $ N=2 $. For each delay value $d$,
                the plot shows the firing rate in a time range $[0,3d]$.
                This is a case without equilibria. In both plots dashed lines correspond with values of the firing rate sequence \eqref{eq:sequence-N} with $b$ and $N_{0,\infty}$ (initial condition) given by the parameters of the nonlinear system.
	}
	\label{fig:b_05_22_nonlinear_system}
\end{figure}

\subsubsection*{Periodic states in the highly inhibitory case with large delay}

In this second experiment we discover conditions under which periodic
solutions appear in an inhibitory system. This phenomenon was observed numerically in \cite{ikeda2022theoretical}. We have determined the value of parameter $ b $ for which the periodic states appear for large delay, in the light of the behaviour of the sequence of pseudo-equilibria. Here we find an important
difference with the excitatory case, where the size of the delay is
not essential to determine whether the long-term behavior of the
system conforms to that described by the pseudo equilibrium
sequence. For highly inhibitory systems the size of the transmission
delay is key.  We recall that for the inhibitory case the nonlinear
system has only one steady state $p_\infty$, with its associated
stationary firing rate $N_\infty$, and the behaviour of the firing
rate sequence $N_{k,\infty}$ (the discrete system) is given by Theorem
\ref{th:relations-Nb-negative}: $N_{k,\infty}$ tends to the
equilibrium or to 2-cycle.

\newpage

Our first aim is to find appropriate values of the parameters for
which periodic states emerge.  Then we shall focus on the study of
periodic solutions.  Figure \ref{fig:b_star} suggests which values we
should choose for $b$ in order to find periodic states.  We see that
the bifurcation value $b^*$ is around the value $b^*\approx -9.4$ (in
the sense of Theorem \ref{th:relations-Nb-negative}).  Thus, the
firing rate sequence $N_{k,\infty}$ converges to 
$N_\infty$ if $b^*<b$ and tends towards a 2-cycle
$\left\{N^-,N^+\right\}$, if $b<b^*$.  This tells us that a
sufficiently large delay will allow us to observe the same behavior
when simulating the nonlinear system.  We must emphasize that for all
cases with $b\le0$, the choice of initial condition $N_{0,\infty}$
does not appear to change the long-term values of the firing rate
sequence $N_{k,\infty}$.

\begin{figure}[H]
	\begin{center}
		\begin{minipage}[c]{0.49\linewidth}
			\includegraphics[width=\textwidth]{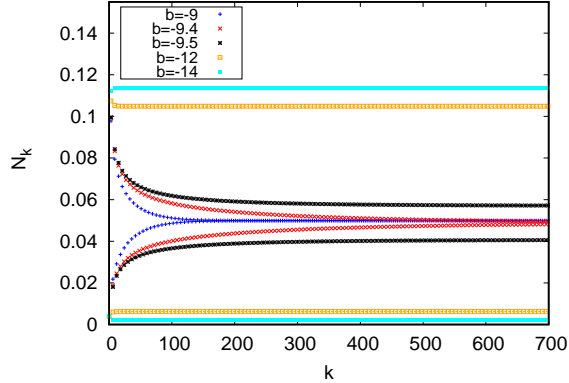}
		\end{minipage}
	\end{center}
	\caption{{\bf Firing rate sequence $\boldsymbol{N_{k,\infty}}$
            \eqref{eq:sequence-N} for different values of
            $ \boldsymbol{b <0}$ with
            $\boldsymbol{N_{0,\infty}=0.004} $. For $b < -9,4$ the
            sequence tends towards a 2-cycle.}  }
	\label{fig:b_star}
\end{figure}

Due to the numerical study of the firing rate sequence, we have a good
guess for the values of $b$ needed to find periodic states. Let us
test different values of $b$ and $d$ and observe the behavior of the
nonlinear system. The following simulations have been performed using
as initial condition the profile given by Equation
\eqref{eq:profiles}, with $N=0$, letting the system evolve up to
$t=300$.  In the left plot of Figure \ref{fig:b-positive-N-comp} we
show the firing rate $ N(t) $ of the nonlinear system, with delay
$ d=10 $, testing three different values of $ b $ to find stable
oscillations.  For $ b $ not sufficiently negative, the amplitude of
the oscillations is observed to be damped in time, suggesting a
tendency towards the single steady state of the system, as we showed
earlier in the study of the sequence of pseudo-equilibria (see Theorem
\ref{th:relations-Nb-negative}). For a sufficiently negative value of
$b$, as shown for $b=-12$, we observe fairly stable oscillations in
time.  Thus, to be sure of the stability of the solutions without the
need for an excessively large delay, we will choose $b=-14$ for the
following experiments.  In the right plot of Figure
\ref{fig:b-positive-N-comp} we can see, for $b=-14$, how the choice of
a large enough delay permits the system to evolve towards a periodic
state.  This picture shows the evolution in time of the firing rate
$N(t)$ with different delay values $d$. If $d$ is small ($d=2$) the
firing rates tends to a stationary value, while if $d$ is large
($d=10$, $d=25$), its behaviour tends to be periodic.  We must
emphasize that, when we do not set a sufficiently high delay, even if
the value of $b$ is very negative and therefore the pseudo-equilibria
sequence in Theorem \ref{th:relations-Nb-negative} points to periodic
behavior, we shall observe convergence to the steady state, as it is
shown with solid line in the right graph, for $d=2$.  This is a strong
difference between the inhibitory case with periodic state and the
excitatory case with plateau state, where the size of the delay was
irrelevant.

\begin{figure}[H]
	\begin{center}
		\begin{minipage}[c]{0.49\linewidth}
			\includegraphics[width=\textwidth]{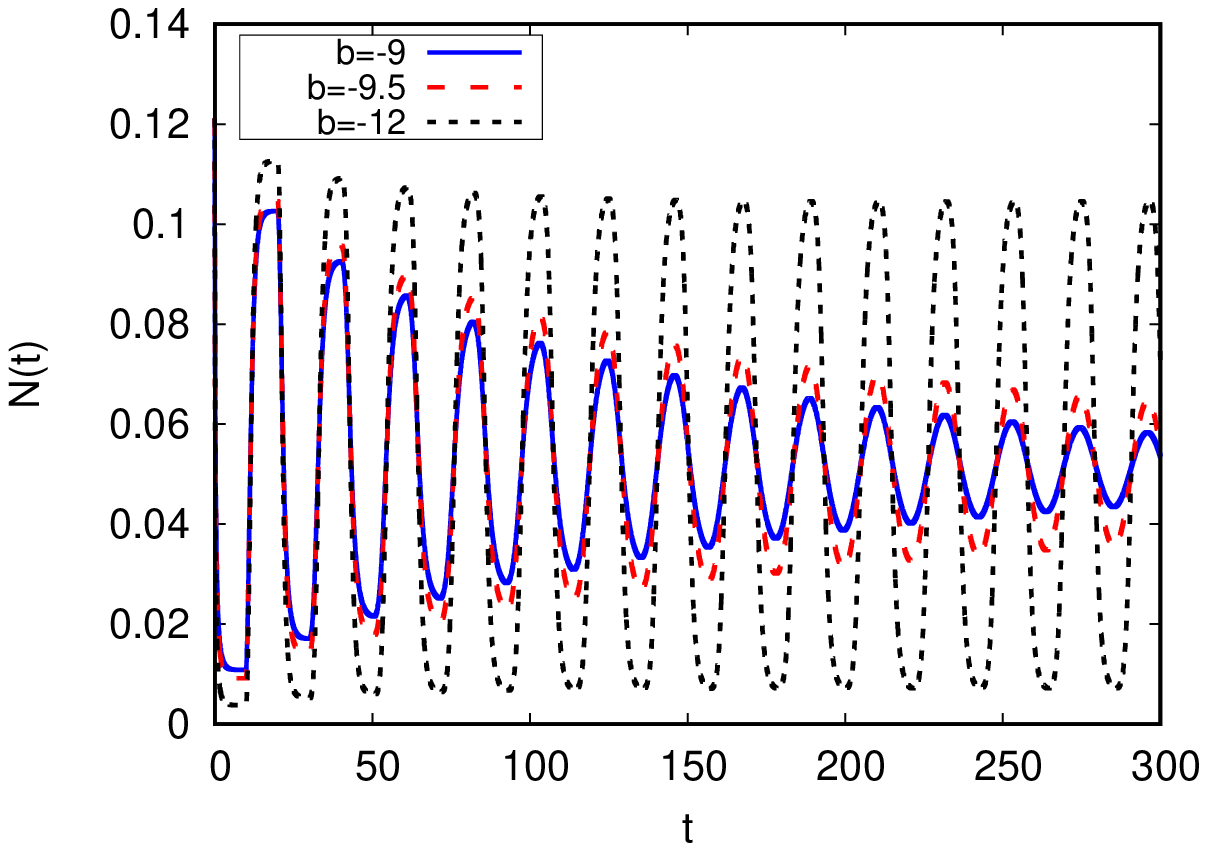}
		\end{minipage}    
		\begin{minipage}[c]{0.49\linewidth}
			\includegraphics[width=\textwidth]{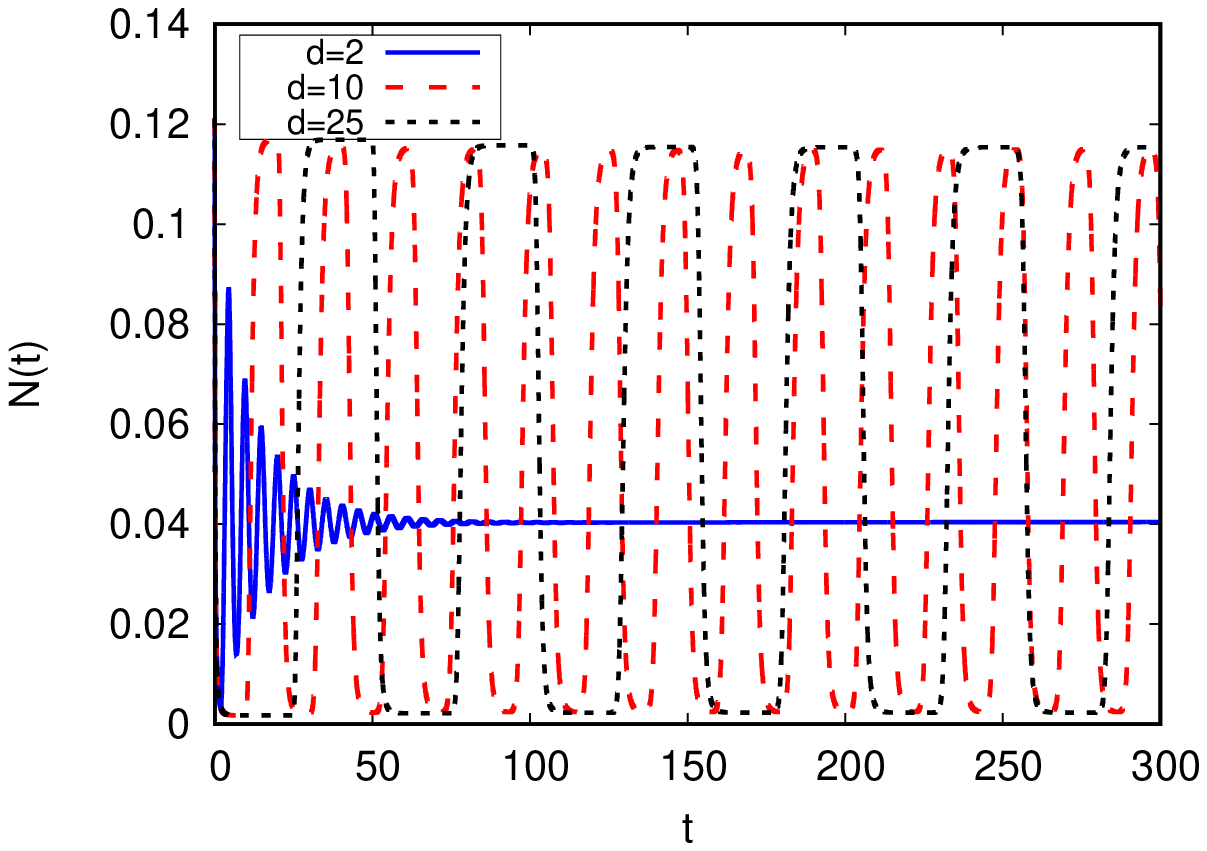}
		\end{minipage}
	\end{center}
	\caption{{\bf Nonlinear system \eqref{eq:edp-delay}} Time
          evolution of the firing rate, $N(t)$. The initial condition
          is given by equation \eqref{eq:profiles} with
          $ N=0 $.\newline {\em Left:} Different values of the
          connectivity parameter $b$ with delay $d=10$.  {\em Right:}
          Connectivity parameter $b=-14$ with different values of the
          delay.  }
	\label{fig:b-positive-N-comp}
\end{figure}

To study the periodic solutions of the highly inhibitory nonlinear system, 
we shall set $b=-14$ and $d=25$ for the rest of the simulations.

For these parameters,
the steady state of 
the nonlinear system $ p_\infty(v) $ has firing rate $N_\infty=0.0396$.
When we study numerically the firing rate sequence $N_{k,\infty}$
we find a tendency towards the values $ N^-= 0.0022$ and $ N^+=0.1136 $ in the sense of Theorem \ref{th:relations-Nb-negative}, as shown above in Figure \ref{fig:b_star} for $b=-14$.

\

In Figure \ref{fig: b-negative-long-time} we compare the solution to
the nonlinear system \eqref{eq:edp-delay} with the 2-cycle,
$\{p^-(v),p^+(v)\}$, found in Theorems \ref{th:relations-Nb-negative}
and \ref{thm:two_cicle_convergence} for the succession of
pseudo-equilibria $ \left\{p_{k,\infty}\right\}_{k\ge0} $.  In the
left graph we observe the comparison between the lower state
$ p^-(v) $ and the approximated solution to the Fokker-Planck
equation, starting with an initial condition given by an approximation
of $ p^-(v) $. The times shown in the graphs have been selected so
that the distribution should coincide with $p^-(v)$, since they are
even multiples of the delay. In the graph on the right we see a
similar comparison between $ p^+(v) $ and the approximated $ p(v) $,
starting now with an approximation of $ p^+(v) $, also in the times
they should overlap.  We can see that in the left graph there
is 
a small difference between the values of $p(v,t)$ for the different
times represented with respect to $p^-(v)$, but in the right graph the
difference is practically negligible.  This may be due to the fact
that the observed period for the periodic state is not exactly $2d$,
as can be seen in the bottom plot, where we show the firing rate of
the system, computed for a simulation with initial condition given by
Equation \eqref{eq:profiles} with $ N=0 $.

\begin{figure}[H]
	\begin{center}
		\begin{minipage}[c]{0.49\linewidth}
			\begin{center}
				\includegraphics[width=\textwidth]{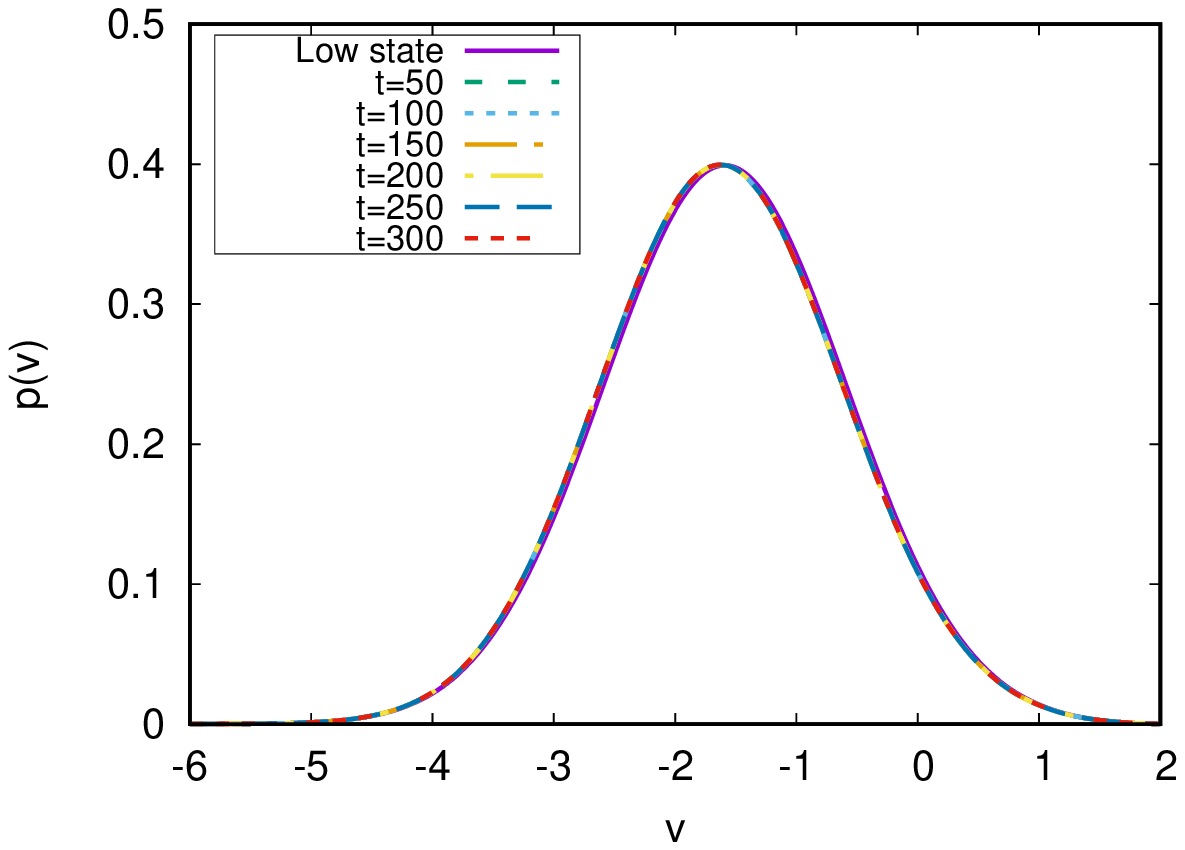}
			\end{center}
		\end{minipage}
		\begin{minipage}[c]{0.49\linewidth}
			\begin{center}
				\includegraphics[width=\textwidth]{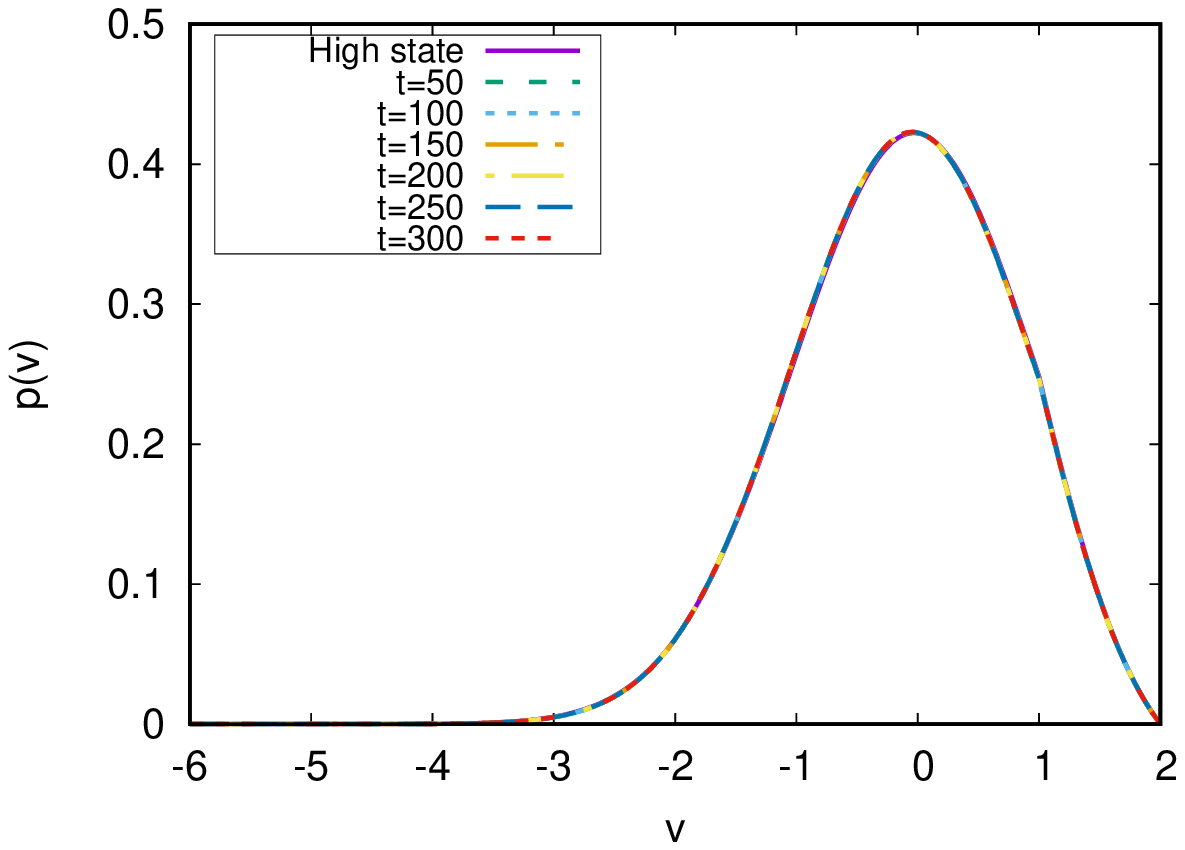}
			\end{center}
		\end{minipage}
	\end{center}
	\begin{center}
	\begin{minipage}[c]{0.49\linewidth}
		\includegraphics[width=\textwidth]{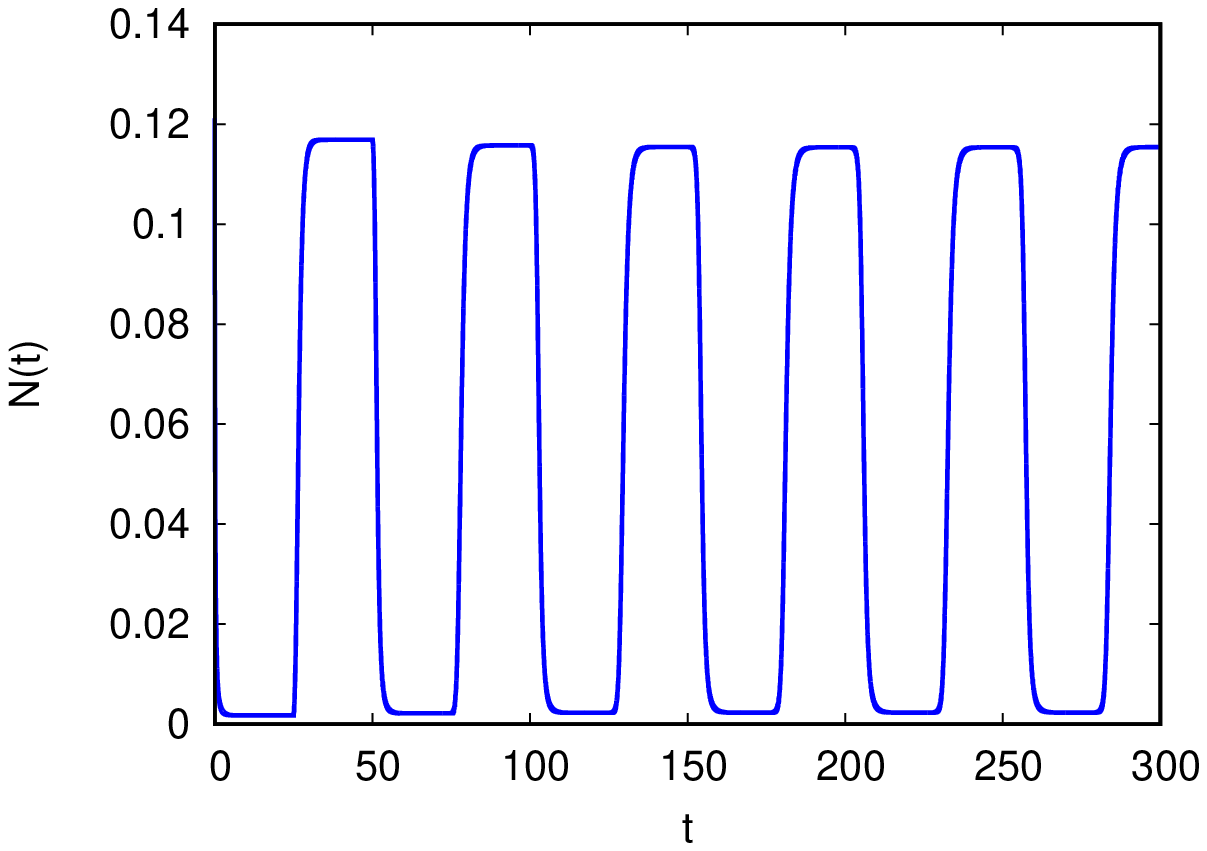}
	\end{minipage}
\end{center}
	\caption{{\bf Nonlinear system \eqref{eq:edp-delay}
			with $\boldsymbol{b=-14}$ and delay value
			$\boldsymbol{d=25}$}.\newline
		\emph{Top left:} Initial condition given by the pseudo equilibrium $ p^-(v) $, from equation \eqref{eq:profile-}. Approximated solution to the nonlinear system, $ p(v,t) $, at different times, compared with $ p^-(v) $. \emph{Top right:} Initial condition given by the pseudo equilibrium $ p^+(v) $, from equation \eqref{eq:profile+}. Approximated $ p(v,t) $ at different times, compared with $ p^+(v) $. \emph{Bottom:} Initial condition given by \eqref{eq:profiles} with $ N=0 $. Time evolution of the firing rate $ N(t) $.
	  }
	\label{fig: b-negative-long-time}
\end{figure}

Finally, in Figure \ref{fig: b-positive-N}, we analyse how the initial condition
influences the evolution of the nonlinear system. We consider
four different initial conditions shown in the top left graph:
$p^-(v)$, $ p^+(v) $ and two double Maxwellians distributions,
as \eqref{eq:2maxw}, with $\mu_{\text{low}}=-1$, $\mu_{\text{high}}=0.4$
and $ \sigma=0.5 $ in both cases.
Our purpose is to provide evidence that regardless of the initial condition, with these values of $b$ and $d$, the system will end up in a periodic state between the pseudo-equilibria $p^-(v)$ and $p^+(v)$, with pseudo-stationary firing rates $N^-$ and $N^+$. Nevertheless, it is worth noting how the initial condition determines which pseudo equilibrium is going to be reached first, determining the times at which the distribution $p(v,t)$ will be close to $p^-(v)$ and $p^+(v)$.

In top right plot of  Figure \ref{fig: b-positive-N},
we can see the evolution on time of the four firing rates
of the different simulations. We note that for $t>150$
the firing rates of the simulations starting with the low state and
the double Maxwellians low ($\mu_{\text{low}}=-1$) are synchronised, and
the same is true for the firing rates of the simulations starting with
the high state and the double Maxwellians high ($\mu_{\text{high}}=0.4$).
We also see, in the bottom graphs, these synchronies of the corresponding
distributions $ p(v,t) $, 
``passing'' through $ p^-(v) $ and $ p^+(v) $
at the same times ($t=250$ left and $t=264$ right), and describing,
in this way, a periodic behaviour.

\begin{figure}[H]
	\begin{center}
		\begin{minipage}[c]{0.49\linewidth}
			\includegraphics[width=\textwidth]{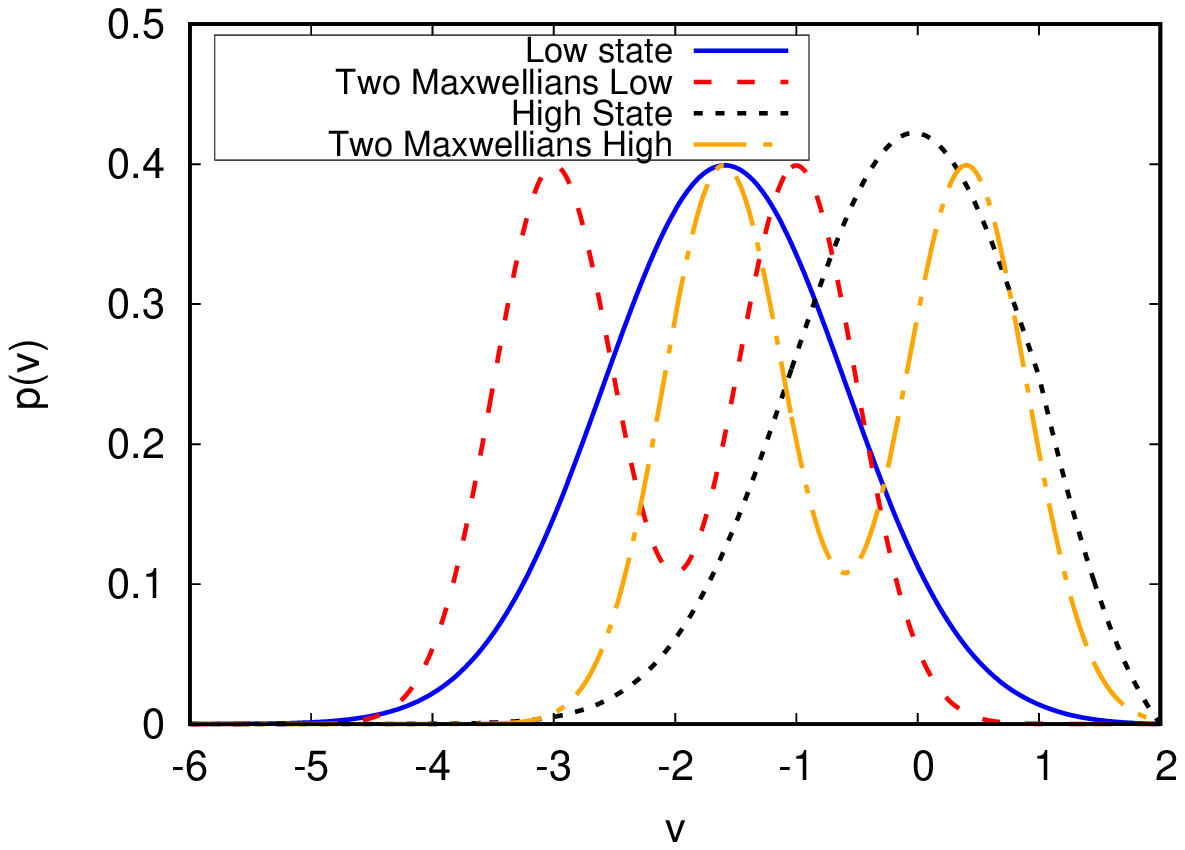}
		\end{minipage}    
		\begin{minipage}[c]{0.49\linewidth}
			\includegraphics[width=\textwidth]{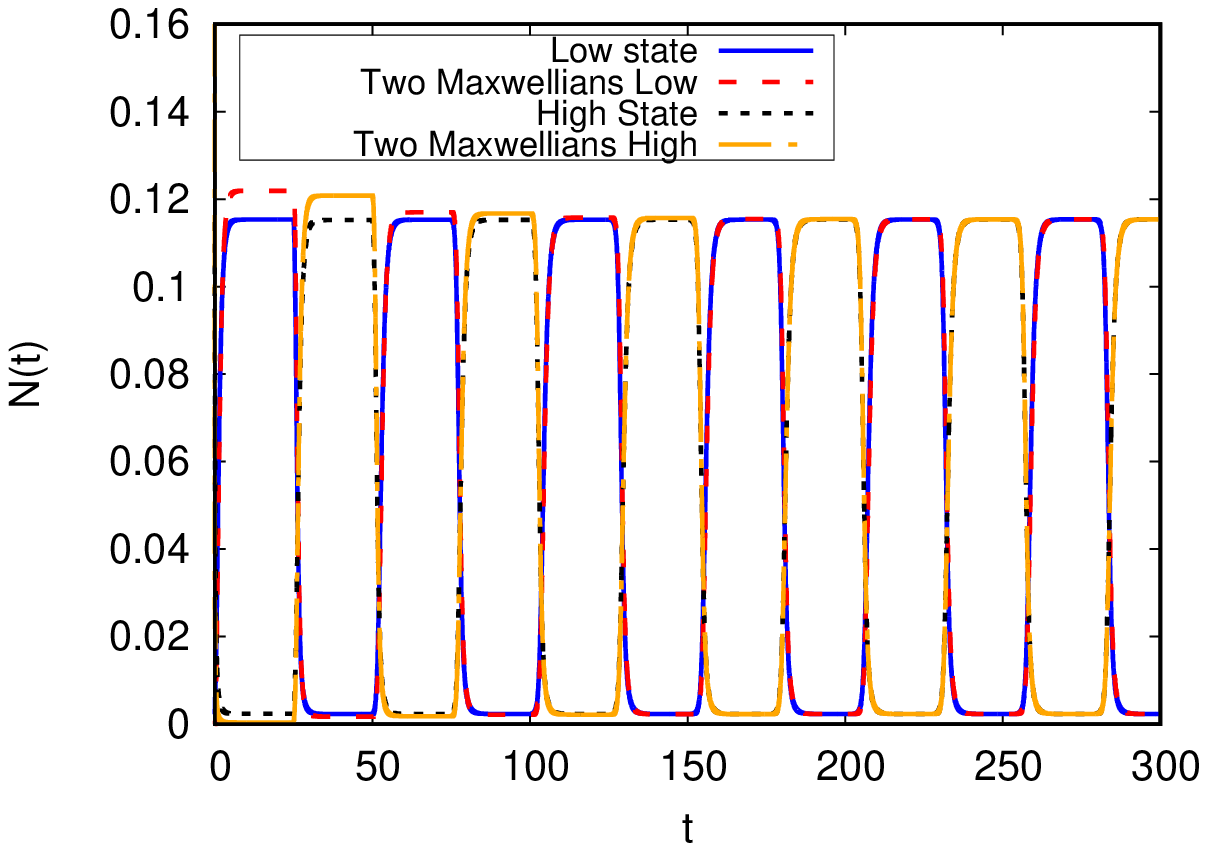}
		\end{minipage}
	\end{center}
	\begin{center}
		\begin{minipage}[c]{0.49\linewidth}
			\includegraphics[width=\textwidth]{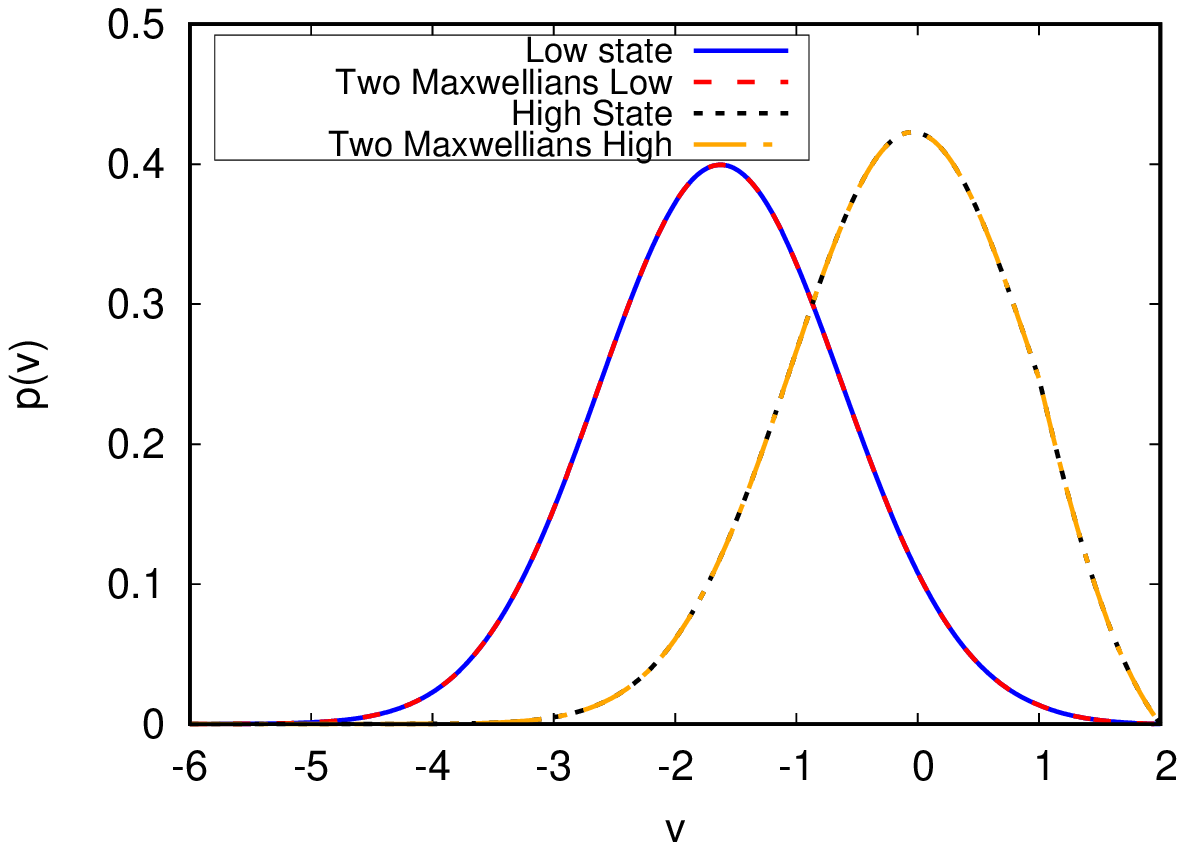}
		\end{minipage}
		\begin{minipage}[c]{0.49\linewidth}
			\includegraphics[width=\textwidth]{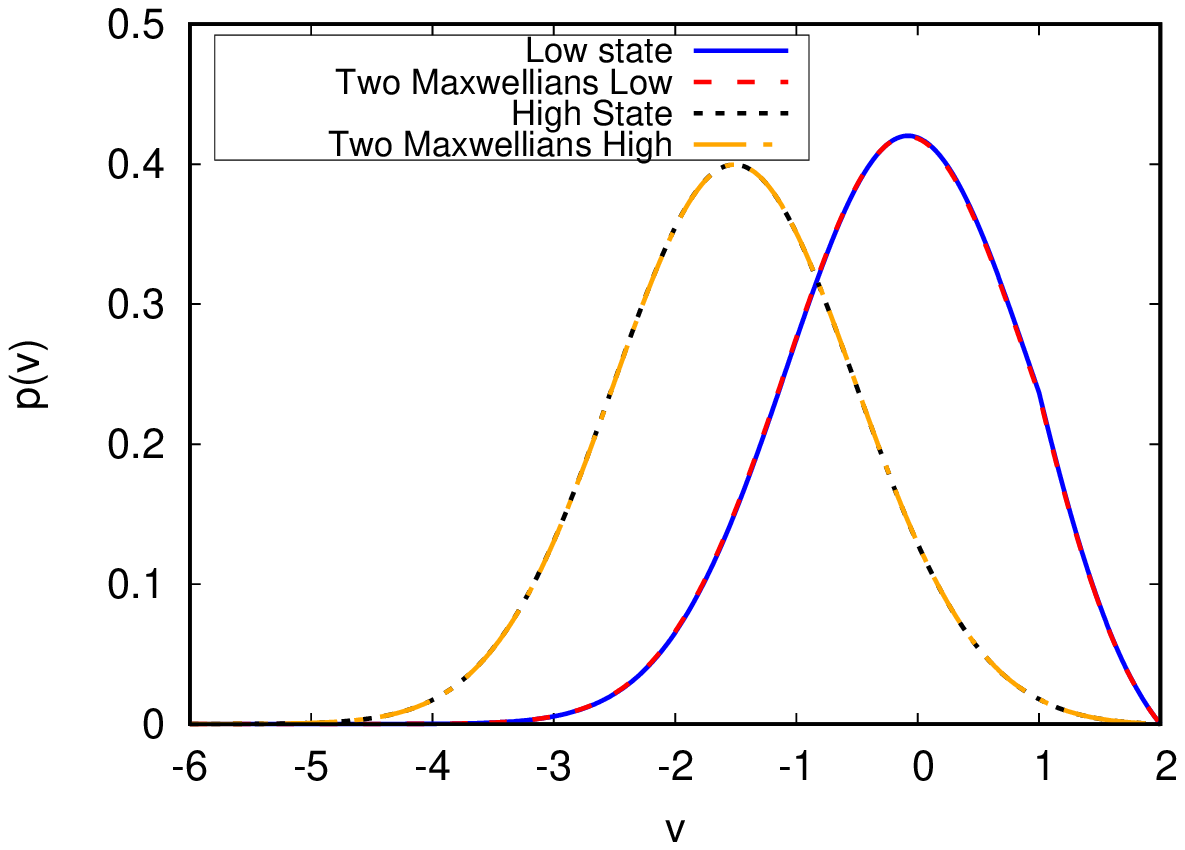}
		\end{minipage}
	\end{center}
	\caption{{\bf Nonlinear system \eqref{eq:edp-delay}
			with $\boldsymbol{b=-14}$ and $\boldsymbol{d=25}$} using four different initial conditions. \emph{Top left: } Initial conditions: we consider the low pseudo equilibrium given by equation \eqref{eq:profile-}, the high pseudo equilibrium given by equation \eqref{eq:profile+} and two configurations of double Maxwellians given by equation \eqref{eq:2maxw}, with $ \mu_{\text{low}}=-1 $, $ \mu_{\text{high}}=0.4 $ and $ \sigma=0.5 $ in both cases. \emph{Top right: } Comparison between the time evolution of the firing rate $ N(t) $ depending on the initial condition. \emph{Bottom left and right: } Distribution $ p(v) $ at times $ t=250 $ and $ t=264 $ respectively, for both initial conditions.
	}
	\label{fig: b-positive-N}
\end{figure}

The behaviours shown in Figures \ref{fig:b-positive-N-comp} and
\ref{fig: b-negative-long-time} 
agree with that of the firing rate and pseudo-equilibria sequences described in Theorems \ref{th:relations-Nb-negative} and \ref{thm:two_cicle_convergence}; if $b^*\approx9.5< b<0 $ we find convergence to the steady state, no matter what the initial condition is. If $ b<b^* $ then $ d $ should be sufficiently large to find a periodic state. Otherwise, there will be convergence to equilibrium. Finally, Figure \ref{fig: b-positive-N} helps us to understand which are the different options for the possible periodic states to which the nonlinear system tends, as well as their dependence on the initial condition. 

  The periodic states seem to be determined by the solutions starting from
  one of the two pseudo-equilibria. These solutions, when the delay is large,
  approach the other pseudo equilibrium at the end of each delay period.
  In the next delay period they return to the initial pseudo equilibrium,
  and so continue in the following periods of length $d$.
  And they tends to a periodic state, which we can call $p_\infty^-(v,t)$ and
  $p_\infty^+(v,t)$, depending if the initial condition is $p^-(v)$
  or $p^+(v)$, respectively.
  Simulations lead us to suspect that the period is $2d+\epsilon$,
  because of the time it takes to move from one pseudo equilibrium to
  another, and $p_\infty^-(v,t)=p_\infty^+(v,t+d+\epsilon/2)$.
  
To be more precise
what we would expect to be demonstrated in the nonlinear system,
in cases where the firing rate sequence tends to 2-cycle, i.e $ b<b^* $, can be stated as follows:
{\em
  Let us  consider $ b<0$, 
  an initial condition $ p_0\in X $ 
  (and $N(t)=-\p_vp(V_F)$ for $t\in [-d,0]$)
	and  $p$ its related solution to the nonlinear system
	\eqref{eq:edp-delay}. 
	Assuming that
		the firing rate sequence $\left\{N_{k,\infty}\right\}_{k\ge0}$,
		with initial value $N_{0,\infty}:=-\p_vp_0(V_F)$
		(see \eqref{eq:sequence-N}) tends to a 2-cycle, $\{N^-,N^+\}$,
	then there exist $ d_0>0 $ large enough,
	such that the solution,
	$p$,  to the nonlinear system \eqref{eq:edp-delay} 
	with transmission delay $ d>d_0 $ and initial condition $p_0$
	has the following behaviour:
	\begin{enumerate}
		\item If the initial datum, $p_0$, 
		is $p^-$, the pseudo-equilibrium of the
                nonlinear system \eqref{eq:edp-delay}
                associated to $N^+$ (see \eqref{eq:profile-}),  
		thus, the solution tends
		to a periodic function, $p_\infty^-(v,t)$.
		\item If the initial datum, $p_0$, 
		is $p^+ $,
		the pseudo-equilibrium of the nonlinear system
                \eqref{eq:edp-delay} associated to $N^-$
                (see \eqref{eq:profile+}),
		thus, the solution tends
		to a periodic function, $p_\infty^+(v,t)$.
		\item For a general initial condition $N_0:=-\p_vp_0(V_F)$:
		\begin{enumerate}
			\item If $N_0<N_\infty$, thus,  the system tends to $p_\infty^-(v,t+\delta_{N_0})$.
			\item If $N_\infty<N_0$, thus,  the system tends to $p_\infty^+(v,t+\delta_{N_0})$,
		\end{enumerate}
	with fixed $ \delta_{N_0}\in\R $ for each initial condition $ N_0 $.
		
	\end{enumerate}
}

\section{Conclusions}
\label{sec:conclusions}

In this article we have introduced a discrete system which helps to
better understand the nonlinear leaky integrate and fire (NNLIF) model
when large transmission delay is considered.  This discrete system is
defined only in terms of the system parameters.  It allows us to build
a firing rate and pseudo-equilibria sequences, that determine the
long-time behaviour of the nonlinear system \eqref{eq:edp-delay}.  The
advantage of the discrete model lies in its simplicity.  It allows for
quick simulations that provide accurate information about the NNLIF
system, such as the estimated time to approach equilibrium, whether
the system tends toward a steady state, the possible appearance of
periodic solutions or plateau states, etc.

We have  analytically studied the related discrete system.
Our results give a global view of the asymptotic behaviour of the
discrete system for all possible values
of the connectivity parameter $b$.
Analytically, the link with the nonlinear system \eqref{eq:edp-delay}
has been proved in Theorem \ref{thm:convergence}, but
it only works if $b$ is small enough, in which case the system converges
to its unique equilibrium.
The long-term behaviour for weakly connected networks was already known
using  the entropy dissipation method
\cite{carrillo2014qualitative, caceres2017blow,caceres2019global}.
However, our strategy is different and new, as it describes the behaviour
in relation to the pseudo-equilibria sequence \eqref{eq:sequence-p}.

In addition to our analytical results we show a numerical study,
that leads us to think that the nonlinear system should behave as shown
by the sequence of pseudo-equilibria in all cases with high delay.
The motivation for that conjecture is clear:
large delay means that the nonlinear system is piecewise linear and
with enough time to reach linear equilibria.
The numerical results of this work  describe
the existence of periodic states for the nonlinear system, in the case
of large delay and very negative connectivity parameter, as has been
observed previously by other authors \cite{ikeda2022theoretical}.
We offer range of values for the parameters $b$ and $d$ from
which this phenomenon should occur.
Moreover, we can link the numerical part with our study of the sequence
of pseudo-stationary states, giving a plausible theoretical explanation
for the existence of these oscillations, and shedding some light on
the subsequent analytical test of them, which will be carried out
in future work. We can also relate the study of the sequence
of pseudo-equilibria to the behaviour of the system for $b$ positive.
Finding here a possible explanation for the emergence of the plateau state,
observed in the previous work \cite{CR-L}.

\
          
    To conclude, we summarise the observed global behaviour of systems with large delay in
    terms of its initial firing rate, $N_0=-\p_vp_0(V_F)$:
        \begin{enumerate}
        \item Excitatory networks ($0<b$) show two possibilities: they
          can evolve to:
          \begin{enumerate}
          \item a stationary distribution, the single steady state or the
            steady profile with lower firing rate, in case of two equilibria.
          \item the uniform distribution between $V_R$ and $V_F$
            (plateau state).  This case could occur even with small
            transmission delay value, $d$, if there is no steady state
            ($b$ large), and in cases with two equilibria.
          \end{enumerate}
          \item Inhibitory networks ($b<0$) also show two
            possibilities: they can evolve towards:
          \begin{enumerate}
          \item the stationary distribution, if $b^*<b$.
          \item a periodic solution between the 2-cycle $\{p^-,p^+\}$, if $b<b^*$.
          \end{enumerate}
         \end{enumerate}

\newpage

\appendix

\section{Appendix: Auxiliary calculations}
\label{app:B}
We study the monotonicity of the function
$g(b)=\frac{-\partial_NI(b,N_b^*)}{I(b,N_b^*)^2}$,
used in the proof of Theorem \ref{th:relations-Nb-negative}.
\begin{lem}
  Let us consider $ b\in\left(-\infty,0\right] $
  and $ N(b)$ the solution to the equation $ NI(b,N)=1 $,
  then,  $g(b):=\frac{-\partial_NI(b,N(b))}{I(b,N(b))^2}$
  is an increasing function, defined in $\left(-\infty,0\right] $.
  \label{lem:monotonicity_g}
\end{lem}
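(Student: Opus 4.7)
\medskip

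\noindent\textbf{Proof proposal.} My plan is to exploit the fact, visible from the integral representation of $I$ given in \eqref{eq: I-derivate}, that $I(b,N)$ depends on $(b,N)$ only through the product $bN$. Setting
$$
J(x) := \int_0^\infty e^{-s^2/2} e^{-sx}\,\frac{e^{sV_F}-e^{sV_R}}{s}\,ds,
$$
we have $I(b,N)=J(bN)$, and in particular $\partial_N I(b,N)=b\,J'(bN)$. Since $V_F>V_R$, differentiation under the integral gives $J>0$, $J'<0$ and $J''>0$ on $\R$. The defining relation $N(b)\,I(b,N(b))=1$ becomes $N(b)\,J(x(b))=1$ with $x(b):=b\,N(b)$, so
$$
N(b)=\frac{1}{J(x(b))},\qquad b=x(b)\,J(x(b)).
$$

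\noindent The first step is to establish that $b\mapsto x(b)$ is an increasing bijection from $(-\infty,0]$ onto $(-\infty,0]$. Indeed, viewing $b$ as a function of $x$ through $b(x)=xJ(x)$, one finds $b'(x)=J(x)+xJ'(x)>0$ for $x\le 0$, because $J(x)>0$ and $xJ'(x)\ge 0$ (product of two non-positives). Combined with $b(0)=0$ and the fact that $N(b)\to 0$ as $b\to-\infty$ (proved in Theorem \ref{th:relations-Nb-negative}, which forces $J(x(b))\to\infty$ and hence $x(b)\to-\infty$), this gives the desired bijection. Consequently, it suffices to show that the function
$$
G(x):=-\frac{xJ'(x)}{J(x)}
$$
is strictly increasing on $(-\infty,0]$, since $g(b)=-N(b)^2\,\partial_N I(b,N(b))=-J(x)^{-2}\cdot x\,J(x)\cdot J'(x)/J(x)\cdot J(x)=G(x(b))$ (the algebra simplifies cleanly using $N=1/J(x)$ and $\partial_N I=bJ'=xJ'/N$, hence $-N^2\partial_N I=-NxJ'=-xJ'/J$).

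\noindent The core calculation is then
$$
G'(x)=\frac{1}{J(x)^2}\Bigl[-J'(x)J(x)-x\bigl(J''(x)J(x)-J'(x)^2\bigr)\Bigr].
$$
The first bracketed term is strictly positive because $J>0$ and $J'<0$. For the second, I will apply the Cauchy--Schwarz inequality to the integral representations
$$
-J'(x)=\int_0^\infty s\,f(s)\,e^{-sx}\,ds,\qquad J''(x)=\int_0^\infty s^2\,f(s)\,e^{-sx}\,ds,\qquad J(x)=\int_0^\infty f(s)\,e^{-sx}\,ds,
$$
with $f(s):=e^{-s^2/2}(e^{sV_F}-e^{sV_R})/s\ge 0$. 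Writing $s\sqrt{f}\,e^{-sx/2}\cdot \sqrt{f}\,e^{-sx/2}$ inside the integral defining $J'$ and applying Cauchy--Schwarz yields $J'(x)^2\le J(x)\,J''(x)$, so $J''J-(J')^2\ge 0$. For $x\le 0$, the factor $-x\ge 0$ then makes the second bracketed term non-negative, so $G'(x)>0$ throughout $(-\infty,0]$.

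\noindent The main (mild) obstacle is bookkeeping: verifying the change of variables $b\leftrightarrow x$ is well-defined and increasing on the full interval $(-\infty,0]$ (which requires the limit $N(b)\to 0$ already established in Theorem \ref{th:relations-Nb-negative}), and checking the signs of $J$, $J'$, $J''$ from the integrand. The Cauchy--Schwarz step — which expresses the log-convexity of $x\mapsto J(x)$, viewed as a Laplace transform of a non-negative measure — is what provides the essential positivity $J''J-(J')^2\ge 0$. Everything else is routine differentiation.
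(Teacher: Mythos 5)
Your proposal is correct, and it takes a genuinely different route from the paper's proof. The paper works directly with the two-variable function $I(b,N)$: it computes $g'(b)$ by the chain rule (involving $\partial_b\partial_N I$, $\partial_N^2 I$, $\partial_b I$, $\partial_N I$ and the implicit derivative $N'(b)$), substitutes $N'(b) = -\partial_b I/(I^2 + \partial_N I)$, and after simplification reduces positivity of the numerator to the Cauchy--Schwarz inequality $\left(\int_0^\infty w\right)^2 \le \int_0^\infty v\int_0^\infty u$. You instead exploit the structural observation that $I(b,N)=J(bN)$, which collapses the problem to one variable: $g(b)=G(x(b))$ with $G(x)=-xJ'(x)/J(x)$ and $x=bN$, so it suffices to check that both $x\mapsto G(x)$ and $b\mapsto x(b)$ are increasing on $(-\infty,0]$. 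The core estimate is then $J''J-(J')^2\ge 0$, which is the \emph{same} Cauchy--Schwarz (log-convexity of a Laplace transform) as the paper's $\int v\int u\ge\left(\int w\right)^2$, merely repackaged. What your approach buys is a much lighter calculation — no mixed partial $\partial_b\partial_N I$, no implicit differentiation of $N(b)$ that has to be substituted and cancelled, and the monotonicity of the change of variables $b\leftrightarrow x$ is transparent from $b'(x)=J(x)+xJ'(x)>0$ on $x\le 0$. Two very minor remarks: you can avoid invoking Theorem \ref{th:relations-Nb-negative} altogether when verifying the range of $b(x)$, since $J$ decreasing gives $J(x)\ge J(0)>0$ and hence $b(x)=xJ(x)\le xJ(0)\to-\infty$; and note that strict positivity of $G'$ already follows from the term $-J'J>0$ alone, with the Cauchy--Schwarz term only contributing a nonnegative extra, so $g$ is in fact strictly increasing, consistent with the paper's conclusion.
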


\begin{proof}
We consider
$ v(s)\!:=\!s^{-1}e^{-s^2/2}e^{-sbN(b)}\!\!\left(e^{sV_F}\!\!-\!e^{sV_R}\right) $,
$ w(s)\!:=\!e^{-s^2/2}e^{-sbN(b)}\!\!\left(e^{sV_F}\!\!-\!e^{sV_R}\right) $ and $ u(s):=se^{-s^2/2}e^{-sbN(b)}\left(e^{sV_F}-e^{sV_R}\right) $, and rewrite  the function $ I(b,N(b)) $ and some of its useful derivatives as follows:
\begin{equation*}
  I(b,N(b))=
  \int_{0}^{\infty}\!\!\!v(s)ds>0,
\end{equation*}
\begin{equation*}
  \p_N I(b,N(b))=
  -b\!\int_{0}^{\infty}\!\!\!w(s)ds>0,
  \quad
   \p_b I(b,N(b))=
  -N(b)\!\int_{0}^{\infty}\!\!\!w(s)<0,
\end{equation*}
\begin{equation*}
  \p^2_NI(b,N(b))=
  b^2\!\int_{0}^{\infty}\!\!\!\!u(s)ds>0,
  \ \mbox{and} \
  \p_b\p_NI(b,N(b)) =bN(b)\!\int_{0}^{\infty}\!\!\!\!u(s)ds - \int_{0}^{\infty}\!\!\!\!w(s)ds<0,
\end{equation*}
In the following, to shorten the notation we write $ I $ instead of $ I(b,N(b)) $, $ N $ instead of $ N(b) $ and $ N' $ instead of $ \frac{\d N}{\d b}(b) $.
Thus
\begin{equation}\label{eq:d_bg_1}
 g'(b) =\frac{-I\p_b\p_NI-N'I\p_N^2 I+2\p_NI\p_bI+2N'\left(\p_N I\right)^2}{I^3}.
\end{equation}
We derive implicitly in $ I(b,N(b))N(b)=1 $, and obtain
$N'=\frac{-\p_bI}{I^2+\p_N I}$.
        Using it in
        \eqref{eq:d_bg_1}
\begin{equation*}
   g'(b)=\frac{-I^2\p_b\p_NI-\p_NI\p_b\p_NI+
    \p_bI\p_N^2I+2I\p_bI\p_NI}{I^2(I^2+\p_NI)}.
\end{equation*}
The denominator is positive, so we conclude the proof if we prove that the numerator is also positive.
\begin{multline*}
	 g'(b)I^2(I^2+\p_NI)=-\left(\int_{0}^{\infty}v(s)ds\right)^2\left(bN\int_{0}^{\infty}u(s)ds-\int_{0}^{\infty}w(s)ds\right)\\+b\int_{0}^{\infty}w(s)ds\left(bN\int_{0}^{\infty}u(s)ds-\int_{0}^{\infty}w(s)ds\right)\\-Nb^2\int_{0}^{\infty}w(s)ds\int_{0}^{\infty}u(s)ds+2Nb\int_{0}^{\infty}v(s)ds\left(\int_{0}^{\infty}w(s)ds\right)^2\\=-b\int_{0}^{\infty}v(s)ds\int_{0}^{\infty}u(s)ds+\left(\int_{0}^{\infty}v(s)ds\right)^2\int_{0}^{\infty}w(s)ds+b\left(\int_{0}^{\infty}w(s)ds\right)^2,
\end{multline*}
where in the first and last terms we have used the equality $ N=I^{-1}=\left(\int_{0}^{\infty}v(s)ds\right)^{-1} $. It can be easily seen by observing the sign of the different parameters, the only negative term is the last one, so that though the following Cauchy-Schwarz inequality
\begin{equation*}
	\int_{0}^{\infty}v(s)ds\int_{0}^{\infty}u(s)ds\ge\left(\int_{0}^{\infty}w(s)ds\right)^2, \quad\text{as}\quad w(s)=\sqrt{v(s)u(s)},
\end{equation*}
we cancel the last term with the first one and show that $ \frac{\d}{\d b}\p_Nf(b,N(b))>0 \;\forall\; b<0 $.
\end{proof}

%%%%%%%%%%%%%%%%%%%%%%%%%%%%%%%%%%%%%%%%%%%%%%%%%%%%%%%%%%%%%%%%%%%
\vspace*{8cm}
\subsection*{Acknowledgments}

\thanks{\em The authors acknowledge support from projects of the
  Spanish \emph{Ministerio de Ciencia e Innovación} and the European Regional
  Development Fund (ERDF/FEDER) through grants PID2020-117846GB-I00,
  RED2022-134784-T, and CEX2020-001105-M, all of them funded by
  MCIN/AEI/10.13039/501100011033.}

%%%%%%%%%%%%%%%%%%%%%%%%%%%%%%%%%%%%%%%%%%%%%%%%%%%%%%%%%%%%%%%%%%%
\newpage
\bibliographystyle{unsrt}
\bibliography{neurons}

\end{document}